\documentclass[a4paper]{article}
\usepackage[T1]{fontenc}
\usepackage[utf8]{inputenc}
\usepackage{framed}
\usepackage[english]{babel}
\usepackage{amssymb}
\usepackage{amsthm}
\usepackage{amsmath}
\usepackage{stmaryrd,mathtools}
\mathtoolsset{showonlyrefs}
\usepackage{bbm}
\usepackage[titletoc,title]{appendix}
\usepackage[bbgreekl]{mathbbol}
\usepackage{tikz}
\usepackage{marginnote}
\usepackage{tikz-cd}
\usetikzlibrary{matrix,arrows,decorations.pathmorphing}
\usepackage{graphicx}
\usepackage[footnote,draft,danish,silent,nomargin]{fixme}
\usepackage[bookmarks,colorlinks]{hyperref}
\usepackage{enumerate}


\setcounter{tocdepth}{4}
\setcounter{secnumdepth}{4}

\theoremstyle{plain}
\newtheorem*{Theorem*}{Theorem}
\newtheorem*{definition*}{Definition}
\newtheorem{theorem}{Theorem}[section]
\newtheorem{definition}{Definition}[section]
\newtheorem{remark}{Remark}

\newtheorem*{conjecture*}{Conjecture}

\newtheorem*{ftheorem*}{Main Theorem} 
\newtheorem{lemma}{Lemma}[section]
\newtheorem*{lemma*}{Lemma}
\newtheorem{proposition}{Proposition}[section]
\newtheorem{proposition-definition}{Proposition-Definition}[section]
\newtheorem*{proposition*}{Proposition}
\newtheorem{corollary}{Corollary}[section]
\newtheorem{corollary*}{Corollary}

\newtheorem*{ex*}{Example}

\newtheorem*{pr*}{Problem}

\newcommand{\R}{\mathbb{R}}
\newcommand{\Q}{\mathbbm{Q}}

\newcommand{\C}{\mathbbm{C}}

\newcommand{\LD}{\mathcal{L}}
\newcommand{\n}{\mathcal{N}}
\newcommand{\M}{\mathcal{M}}

\newcommand{\Z}{\mathbb{Z}}
\newcommand{\To}{\mathbb{T}}

\DeclareMathOperator{\tr}{tr} 
\DeclareMathOperator{\ad}{ad} 
\DeclareMathOperator{\End}{End} 
\DeclareMathOperator{\Id}{Id}

\DeclareMathOperator{\Pic}{Pic}

\DeclareMathOperator{\F}{\mathcal{F}}

\newcommand{\f}{\langle f \rangle}

\DeclareMathOperator{\im}{Im}

\DeclareMathOperator{\SU}{SU}              
\DeclareMathOperator{\SL}{SL}
\DeclareMathOperator{\PSL}{PSL}
\DeclareMathOperator{\PU}{PU}
\DeclareMathOperator{\PGL}{PGL}    
\DeclareMathOperator{\Aut}{Aut}

\DeclareMathOperator{\Hom}{Hom}

\DeclareMathOperator{\CS}{CS}

\DeclareMathOperator{\D}{\tilde{\mathcal{D}}}

\DeclareMathOperator{\K}{K}

\DeclareMathOperator{\Flat}{Flat}

\DeclareMathOperator{\rank}{rank}

\DeclareMathOperator{\diag}{diag}
\DeclareMathOperator{\Td}{Td}

\DeclareMathOperator{\Lb}{\mathbbm{L}}
\DeclareMathOperator{\Eb}{\mathbbm{E}}

\DeclareMathOperator{\Jac}{Jac}

\DeclareMathOperator{\ord}{ord}

\DeclareMathOperator{\ch}{ch}
\DeclareMathOperator{\Spin}{Spin}

\DeclareMathOperator{\Parabolic}{Par}

\DeclareMathOperator{\MCG}{MCG}

\DeclareMathOperator{\res}{res}
\newcommand{\X}{\tilde{X}}
\newcommand{\tM}{\tilde{\mathcal{M}}}
\DeclareMathOperator{\E}{\mathcal{E}}

\bibliographystyle{siam}

\begin{document}

\title{The Automorphism Equivariant Hitchin Index}

\author{Jørgen Ellegaard Andersen 
\& William Elbæk Mistegård 
}

\maketitle 

\begin{abstract}
	Let $\To$ be the one-dimensional complex torus. We consider the action of an automorphism $f$ of a Riemann surface $X$  on the cohomology of the $\To$-equivariant determinant line  bundle $\LD$ over the moduli space $\M$ of rank two  Higgs bundles on $X$ with fixed determinant of odd degree. We define and study the automorphism equivariant Hitchin index $\chi_{\To}(\M,\LD,f)$. We prove a formula for it in terms of cohomological pairings of canonical $\To$-equivariant classes of certain moduli spaces of parabolic Higgs bundles over the quotient Riemann surface $X/\f$. 
\end{abstract}

\section{Introduction}

Let $X$ be a compact Riemann surface of genus $g\geq 2$ with an automorphism $f \in \Aut(X)$ of odd prime order $p$.  Let $\Lambda\rightarrow X$ be an $\f$-equivariant holomorphic line bundle of odd degree.  Denote by $\M$ the moduli space of stable rank two Higgs bundles on $X$ with fixed  determinant $\Lambda$, as introduced in \cite{hitchinSelfDualityEquationsRiemann1987a,simpsonHiggsBundlesLocal1992}.
 The space $\M$ is a smooth  quasi-projective $\To$-variety of dimension $6(g-1)$. Further, $\M$ supports a $\To$-equivariant universal Higgs bundle $(\Eb,\mathbb{\Phi}) \rightarrow \M\times X.$ We construct in Section \ref{sec:FIXEDLOCUS} a canonical $\To\times \f$-equivariant structure on $(\Eb,\mathbb{\Phi})$. Assume that $X^f\not=\emptyset $ and fix $x_0 \in X^f$. Denote by $\pi_{\M}:\M\times X \rightarrow \M$ the projection. Consider the $\To$-equivariant determinant line bundle $\LD \rightarrow \M$ given by
\begin{equation} \label{def:det} \LD=\det(\Eb_{\mid \M \times \{x_0\}})^{1-g} \otimes \det((\pi_\M)_!(\Eb)[1]),
\end{equation} 
where $(\pi_\M)_!(\Eb)[1]$ is the $1$-shift of the derived pushforward of $\Eb$. The $\To\times \f$-eqivariant structure on $\Eb$ induces one on $\LD^k$ for all $k \in \Z$. Let $t$ be the standard character of $\To$. For $k,q,m,\in \Z,q\geq 0$  denote by $H_{m}^q(\M, \LD^{k})$ the $t^m$-weight sub-space of $H^q(\M,\LD^k)$. These are finite-dimensional $\langle f \rangle$-modules.
\begin{definition} \label{index} The \textit{automorphism equivariant Hitchin index} is the  series
	\begin{equation} \label{eq:index}
	\chi_{\To}(\M,\LD^k,f)(t) =\sum_{m \leq 0} \sum_{q \geq 0} (-1)^q  \tr( f :  H_m^q(\M,\LD^k)\rightarrow H_m^q(\M,\LD^k))\cdot t^m.
	\end{equation}
\end{definition} 

Let  $(h,I,J,K) $ be the complete hyper-Kähler metric on $\M$ introduced in \cite{hitchinSelfDualityEquationsRiemann1987a}, for which $I$ is compatible with the algebraic structure. Then $\LD$ is a quantum bundle  for $(\M,\omega_I)$ in the sense of geometric quantization \cite{kostantQuantizationUnitaryRepresentations1970,souriauStructureSystemesDynamiques1970}. Our study of \eqref{eq:index} is motivated by non-abelian Hodge theory \cite{corletteFlatBundlesCanonical1988,donaldsonTwistedHarmonicMaps1987,hitchinSelfDualityEquationsRiemann1987a}, geometric quantization \cite{kostantQuantizationUnitaryRepresentations1970,souriauStructureSystemesDynamiques1970}, quantum topology \cite{reshetikhinRibbonGraphsTheir1990,reshetikhinInvariants3manifoldsLink1991,turaevQuantumInvariantsKnots1994,wittenQuantumFieldTheory1989} and the goal of generalizing the works \cite{andersenWittenReshetikhinTuraev2013,andersenWittenReshetikhinTuraevInvariantsFinite2012,andersenWittenReshetikhinTuraev2017}. To motivate our results, we will first explain the connection between quantum topology and quantization of moduli spaces. The reader familiar with this can skip to Section \ref{sec:introfixedlocus}. 
\newpage
\paragraph*{Quantum Topology.}  Let $G=\SU(n)$. Chern-Simons theory with gauge group $G$ is a Lagrangian field theory, with action functional defined on connections on principal $G$-bundles on oriented compact three-manifolds. The classical solutions are the flat $G$-connections. It was envisioned by Witten \cite{wittenQuantumFieldTheory1989} that level-$k$ quantum Chern-Simons theory with gauge group $G$ form a three-dimensional TQFT in the sense of Atiyah \cite{atiyahTopologicalQuantumField1988}, and that the evaluation of polynomial knot invariants  \cite{HOMPLY,jonesPolynomialInvariantKnots1985,jonesHeckeAlgebraRepresentations1987} at $\exp(2\pi i /(k+n))$ could be given an intrinsic definition as certain expectation values in quantum Chern-Simons theory.

A three-dimensional TQFT $Z_k$ was subsequently constructed mathematically using surgery presentations of three-manifolds and algebraic and combinatorial means by Reshetikhin and Turaev \cite{reshetikhinInvariants3manifoldsLink1991,reshetikhinRibbonGraphsTheir1990,turaevQuantumInvariantsKnots1994}. This is known as the WRT-TQFT. Let $\Lambda_k$ denote the set of irreducible $G$-representations of level at most $k$. Essentially, the WRT-TQFT is a symmetric monoidal functor from the category of compact oriented three-dimensional cobordisms with oriented framed $\Lambda_k$-labelled tangles, to the category of finite dimensional Hilbert spaces. In particular,  to a compact oriented two-manifold $\Sigma$ with a (possible empty) finite subset of points labelled by $\Lambda_k$, the so-called modular functor \cite{andersenConstructionModularFunctors2017} of the WRT-TQFT assign a finite dimensional Hilbert space $Z_k(\Sigma)$ together with a projective unitary representation of the mapping class group
\begin{equation} \label{WRTQuantumRep}
Z_k: \MCG(\Sigma) \rightarrow \PU(Z_k(\Sigma)).
\end{equation}
The WRT-TQFT gives powerful invariants: The projective representations \eqref{WRTQuantumRep} were proven to be asymptotically faithful by the first author in the work \cite{andersenAsymptoticFaithfulnessQuantum2006}. Further, to a closed oriented three-manifold with a labelled link $(M,L)$ the WRT-TQFT assign a topological invariant $Z_k(M,L) \in \C$. These invariants generalize evaluations of polynomial knot invariants \cite{HOMPLY,jonesPolynomialInvariantKnots1985,jonesHeckeAlgebraRepresentations1987}, are determined by the modular functor through TQFT axioms \cite{atiyahTopologicalQuantumField1988,Kontsevich:1988} and have deep connections to number theory and hyperbolic geometry  \cite{Kashaev}.

In \cite{wittenQuantumFieldTheory1989} Witten argued that the TQFT-representation should be realizable by quantization of moduli spaces of flat $G$-connections. The quantization of moduli spaces construction was achieved independently in \cite{axelrodGeometricQuantizationChernSimons1991} and \cite{hitchinFlatConnectionsGeometric1990} , and this will be reviewed below (in the coprime case). By a large body of work culminating in the work of the first author and Ueno \cite{andersenConstructionWittenReshetikhin2015} and involving conformal field theory \cite{andersenConstructionWittenReshetikhin2015,tsuchiyaConformalFieldTheory1989}, skein theory \cite{blanchetThreemanifoldInvariantsDerived1992,blanchetTopologicalQuantumField1995} and the work \cite{laszloHitchinWZWConnections1998}, it is now established, that the projective representations constructed by quantization \cite{axelrodGeometricQuantizationChernSimons1991,hitchinFlatConnectionsGeometric1990} (and introduced below in equation \eqref{eq:quantumrep}) are equivalent to the WRT-TQFT representations \eqref{WRTQuantumRep}.
\paragraph*{Quantization of Moduli Spaces.}  Let $\Sigma$ be compact oriented two-manifold of genus at least two with a marked point $x_0$ labelled by the element $\lambda \in \Lambda_k$, which corresponds to the conjugacy class $C= [e^{2\pi i/n}\Id]$ under the standard bijection between $\Lambda_k$ and certain conjugacy classes of $G$. Let $\n_{\Flat}$ denote the moduli space of flat principal $G$-bundles on $\Sigma^*=\Sigma \setminus\{x_0\} $ with holonomy  around the puncture contained in $C$. The space $\n_{\Flat}$ is a compact symplectic manifold   equipped with the Atiyah-Bott-Goldman symplectic form \cite{atiyahYangMillsEquationsRiemann1983,goldmanSymplecticNatureFundamental1984} and it is a symplectic leaf of the space of classical solutions in Chern-Simons theory \cite{chernCharacteristicFormsGeometric1974a,freedClassicalChernSimonsTheory1995} on $\Sigma^* \times \R$. Further, it supports a pre-quantum line bundle $\LD_{\CS}\rightarrow \n_{\Flat}$ in the sense of \cite{kostantQuantizationUnitaryRepresentations1970,souriauStructureSystemesDynamiques1970,woodhouseGeometricQuantization1992}. To perform Kähler quantization one needs to introduce a Kähler structure. There exists a natural family of Kähler structures parametrized by Teichmüller space $\mathcal{T}$, as we will now explain. Pick a complex structure $\sigma \in \mathcal{T}$ and let $X_{\sigma}=(\Sigma,\sigma)$ be the associated Riemann surface.  By the Narasimhan-Seshadri theorem \cite{narasimhanStableUnitaryVector1965} $\n_{\Flat}$ is diffeomorphic to the projective variety given by the moduli space $\n_{\sigma}$ of stable holomorphic vector bundles on $X_{\sigma}$ of rank $n$ and fixed determinant $\mathcal{O}_X(-x_0)$. This is a smooth projective variety and $\LD_{\CS} \rightarrow \n_{\Flat}$ is isomorphic to Quillen's \cite{qvillenDeterminantsCauchyRiemannOperators1985} determinant line bundle $\LD_{\sigma} \rightarrow \n_{\sigma}.$ This line bundle is ample and the complex structure on $\n_{\Flat}$ induced via this identification is a Kähler structure with respect to the Atiyah-Bott-Goldman symplectic form. Consider the level-$k$ quantization of $\n_{\Flat}$ with respect to this Kähler polarization 
$ H_{k,\sigma}=H^0(\n_{\sigma},\LD_{\sigma}^k).
$ The family of all such Kähler-quantizations of $\n_{\Flat}$ form a smooth vector bundle $H_k\rightarrow \mathcal{T}$. By the independent works \cite{hitchinFlatConnectionsGeometric1990} and  \cite{axelrodGeometricQuantizationChernSimons1991} this bundle supports a projectively flat $\MCG(\Sigma)$-invariant connection, called a Hitchin connection in \cite{andersenHitchinConnectionToeplitz2012}. Fix $\sigma\in \mathcal{T}$ and write $X=X_{\sigma},\n=\n_{\sigma}$ and $\LD=\LD_{\sigma}$. The monodromy of the Hitchin connection results in a projective representation 
\begin{equation} \label{eq:quantumrep}
 V_k:\MCG(\Sigma) \rightarrow \PGL(H^0(\n,\LD^k)),
\end{equation} 
and this is the aforementioned projective representation, which is projectively equivalent to the WRT-TQFT-representation \eqref{WRTQuantumRep}.  

Consider an automorphism $f \in \Aut(X)$. In this case, the action $V_k(f)$ is given by the action of $f$ on $H^0(\n,\LD^k)$. Consider the three-manifold given by the mapping torus of the automorphism $M_f=\Sigma \times I / \sim.$ Let $K\subset M_f$ be the knot traced out by the marked point. Label $K$ by $\lambda$. Consider the WRT invariant $Z_{k}(M_f,K) \in \C$.  As detailed in the works \cite{andersenWittenReshetikhinTuraev2013,andersenWittenReshetikhinTuraevInvariantsFinite2012,andersenWittenReshetikhinTuraev2017}, this invariant can be computed (up to framing correction) by the algebro-geometric formula
\begin{equation} \label{eq:tr}
Z_{k}(M_f,K)=\tr(f:H^0(\n,\LD^k) \rightarrow H^0(\n,\LD^k))=\chi(\n,\LD^k,f).
\end{equation}
 In the works \cite{andersenWittenReshetikhinTuraev2013,andersenWittenReshetikhinTuraevInvariantsFinite2012,andersenWittenReshetikhinTuraev2017} the index $\chi(\n,\LD^k,f)$ was studied (in more general cases with singular moduli spaces of parabolic vector bundles) by a powerful localization technique valid for projective varieties with singularities, developed in \cite{baumRiemannRochSingularVarieties1975,baumRiemannRochTopologicalKtheory1979}. This was used to prove Witten's asymptotic expansion conjecture \cite{wittenQuantumFieldTheory1989} for $M_f$. For elements $\varphi \in \MCG(\Sigma)$, whose action on $\n_{\Flat}$ satisfy a genericity condition, this conjecture was proven for the mapping torus $M_{\varphi}$ using the representations \eqref{eq:quantumrep} in the work \cite{andersenAsymptoticExpansionsWitten2019}, which generalized a result from \cite{charlesAsymptoticPropertiesQuantum2010}.
\paragraph*{Quantum Topology and Higgs Bundles.} Consider, while specializing to rank two, the index $\chi_{\To}(\M,\LD^k,f)$ introduced in \eqref{eq:index}: it is a generalization of the index \eqref{eq:tr} which equals the WRT-TQFT invariant of the mapping torus, and the study of \eqref{eq:index} is motivated by passing from compact gauge group $G$ to complex gauge group $G_{\C}=\SL(2,\C)$. To explain the latter, recall that by non-abelian Hodge theory \cite{corletteFlatBundlesCanonical1988,donaldsonTwistedHarmonicMaps1987,hitchinSelfDualityEquationsRiemann1987a} we have a diffeomorphism
$\M \cong \M_{\Flat},$ where $\M_{\Flat}$ is the moduli space of flat $G_{\C}$-connections on $\Sigma^*$ with holonomy minus the identity around the puncture.

Our main results are Theorem \ref{thm:M^f}, which identifies the fixed variety $\M^f$ with a union $\tM$ of moduli spaces of parabolic Higgs bundles on the quotient Riemann surface, and Theorem \ref{ThmMain}, which gives a formula for \eqref{eq:index} as a sum of cohomological pairings on $\tM^{\To}$. These results are the basis for ongoing work that aims at explicitly evaluating \eqref{eq:index}. Further, in accordance with the TQFT picture, our Corollary  \ref{Cor:TopInv} asserts that \eqref{eq:index} is determined by the Seifert invariants of the mapping torus. 
We now present these results in full detail.

\subsection{The Fixed Locus} \label{sec:introfixedlocus}
 We use the $\To$-localization result of Wu \cite{wuInstantonComplexHolomorphic2003} and the classical Lefschetz fixed point formula \cite{atiyahIndexEllipticOperators1968} to study \eqref{eq:index}. In connection hereto, we identify the fixed loci $\M^{f}$ and $\M^{\To \times \f}.$ Denote by $\n\subset \M$ the $\f$-subvariety given by the moduli space of stable holomorphic vector bundles with fixed determinant $\Lambda.$ The fixed locus $\n^f$ was identified in \cite{andersenAutomorphismFixedPoints2002}, and we generalize the method from that article. 

For each $x \in X^f$, define the $p$'th root of unity $\alpha_x \in \mu_p$ such that the $f$-action on $\Lambda_x$ is given by $\alpha_x \cdot \Id$. Denote by $\mathcal{I}_0$ the set of maps $i$ from  $X^f$ to the set of unordered pairs of $p$'th roots of unity, such that $m(i(x))=\alpha^{-1}_x$ for all $x \in X^f$, where $m$ is the multiplication map. For all $x\in X^f$ let $\sqrt{\alpha_x}$ be the unique square-root of $\alpha_x$ which belong to $\mu_p$. We can and will identify the set $\mathcal{I}_0$ with the set of all tuples $ (i_x)_{x \in  X^f} $ of non-negative integers  with $i_x  \leq (p-1)/2$ for all $x\in X^f$. Under this identification, the map $i$ corresponding to a tuple $(i_x)_{x \in X^f}$ is determined by $	x \mapsto [(\sqrt{\alpha_x}^{-1}\zeta^{i_x}, \sqrt{\alpha_x}^{-1}\zeta^{-i_x})]$, where $\zeta=e^{2\pi i/p}$. 

Consider the projection $\pi: X \rightarrow \X$, where $\X=X/ \f$ is the quotient Riemann surface. Write $\D=\pi(X^f)$ and note that $\pi: X^f \rightarrow  \D$ is a bijection. For all $x\in X^f$ we will write $\pi(x)=x$. For each $i \in \mathcal{I}_0$, let $\D_i=\{x\in \D: i_x\not=0\}$.  Let $\tM_i$ be the moduli space \cite{konnoConstructionModuliSpace1993}  of rank two stable parabolic Higgs bundles on $(\X,\D_i)$ with fixed determinant $\tilde{\Lambda}_i$ and weights $(w_{i,1}(x),w_{i,2}(x))_{x \in \D_i} \in (\Q^2)^{\D_i}$ both introduced in Definition \ref{def:beta}. This is a smooth $\To$-variety. Let $\mathcal{I}\subset \mathcal{I}_0$ be the subset of $i\in \mathcal{I}_0$ such that $\tM_i$ is non-empty. Identify $\f$ with the group $\mu_p$ and consider $\tM_i$ as a $\mu_p$-variety with the trivial action. Write $\tM=\sqcup_{i \in \mathcal{I}} \tM_i$. In Proposition \ref{pro:uniquelift} we construct an $f$-action on $\Eb$ that covers $(f^*,f^{-1}): \M \times X \rightarrow \M \times X$.  
 \begin{theorem} \label{thm:M^f}
The set of components of $\M^f$ is in bijection with $\mathcal{I}$. Fix $i\in \mathcal{I}$ and write $\M_i$ for the corresponding component. \begin{itemize} \item  The component $\M_{i}$ is a $\To$-subvariety of $\M$ and is characterized by the fact that the eigenvalues of the $f$-action on $\Eb_{\mid \M_{i}\times X^f}$ are given by $i.$ The map from Definition \ref{def:varXi} is an isomorphism of $\To$-varieties $\label{isomM_i}
	\varXi_i :\M_i \rightarrow \tM_i.$ 
	 \item The triple introduced in equation \eqref{eq:uiversalparabolic} is a universal $\To$-equivariant parabolic Higgs bundle $(\tilde{\Eb}_i,\tilde{\mathbb{F}}_i,\tilde{\mathbb{\Phi}}_i) \rightarrow \tM_i \times \X.$ The isomorphism of $\To$-varieties $\varXi_i \times \pi: \M_{i}\times X^f \rightarrow \tM_i\times \mathcal{D}$ is covered by an isomorphism of $\To$-equivariant vector bundles
\begin{equation} \label{mu_pequi}  \Eb_{\mid \M_i \times X^f} \rightarrow \tilde{\Eb}_{i \mid \tM_i \times \D}.
\end{equation}

\end{itemize} 
 \end{theorem}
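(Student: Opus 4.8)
The plan is to generalize the method of \cite{andersenAutomorphismFixedPoints2002} from stable vector bundles to stable Higgs bundles, realizing $\M^f$ first as a moduli space of $\f$-linearized Higgs bundles on $X$ and then descending to parabolic Higgs bundles on $\X$. First I would observe that $[(E,\Phi)] \in \M$ lies in $\M^f$ precisely when $(f^*E, f^*\Phi) \cong (E,\Phi)$, i.e.\ there is an isomorphism $\psi\colon f^*E \to E$ intertwining the Higgs fields. Since $(E,\Phi)$ is stable it is simple, so $\psi$ is unique up to $\C^*$, and the $p$-fold composite $\psi \circ f^*\psi \circ \cdots$ equals $\lambda\,\Id$ for some $\lambda \in \C^*$; as $p$ is an odd prime one may rescale $\psi$ to obtain a genuine $\f \cong \mu_p$-linearization of $(E,\Phi)$ that is compatible with the fixed equivariant structure on $\Lambda = \det E$. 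This is exactly the canonical lift of $f$ to $\Eb$ furnished by Proposition \ref{pro:uniquelift}, and the constraint that $\det\psi$ induce the given $f$-action $\alpha_x\cdot\Id$ on $\Lambda_x$ (with the inversion coming from the action covering $(f^*,f^{-1})$) is what forces the product condition $m(i(x)) = \alpha_x^{-1}$ cutting out $\mathcal{I}_0$.

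Next, for each $x \in X^f$ the linearization makes $\mu_p$ act on the rank-two fiber $E_x$, splitting it into eigenlines whose eigenvalues form the unordered pair $i(x)$. Recording these over all fixed points defines a locally constant invariant $i\in\mathcal{I}_0$ on $\M^f$, and I would show that its level sets are exactly the connected components $\M_i$, with $\mathcal{I}\subset\mathcal{I}_0$ the set of types that actually occur. This simultaneously yields the bijection between the components and $\mathcal{I}$ and the characterization of $\M_i$ by the eigenvalues of the $f$-action on $\Eb|_{\M_i \times X^f}$.

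To construct $\varXi_i$ I would descend an $\f$-linearized stable Higgs bundle along $\pi\colon X \to \X$: the $\f$-invariant (isotypical) pushforward produces a rank-two bundle on $\X$, and the eigenline decomposition at the fixed points endows it with a parabolic flag supported on $\D_i$, the weights $(w_{i,1},w_{i,2})$ being read off from the eigenvalues $\sqrt{\alpha_x}^{-1}\zeta^{\pm i_x}$ via the standard dictionary; this is where $\D_i=\{x:i_x\neq 0\}$ enters, since $i_x=0$ gives coincident eigenvalues and hence no flag. The new ingredient over \cite{andersenAutomorphismFixedPoints2002} is the Higgs field: $\Phi$ descends to a parabolic Higgs field valued in $K_{\X}(\D)$, the twist by the branch divisor being forced by Riemann--Hurwitz ($K_X = \pi^*K_{\X}((p-1)R)$ for $R=\sum_{x\in X^f}x$), and I must check that the linearized $\Phi$ preserves the flag so that the descent is (strongly) parabolic. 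An inverse to $\varXi_i$ is obtained by pulling parabolic Higgs bundles back to $\f$-linearized Higgs bundles on $X$ in the manner of Mehta--Seshadri and Furuta--Steer. The crux of the argument, and the step I expect to be the main obstacle, is matching the two stability notions: I must verify that Higgs-stability of $(E,\Phi)$ is equivalent to parabolic Higgs-stability of its descent for the chosen weights, a parabolic-degree computation that is genuinely subtler than the vector-bundle case because the underlying bundle of a stable Higgs bundle need not be semistable.

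Finally, the $\To$-action rescaling the Higgs field commutes with pullback by $f$ and with descent, so $\varXi_i$ is automatically an isomorphism of $\To$-varieties onto the Konno moduli space $\tM_i$ of \cite{konnoConstructionModuliSpace1993}. Carrying the entire construction out in families over $\M_i$ then yields the universal $\To$-equivariant parabolic Higgs bundle $(\tilde{\Eb}_i,\tilde{\mathbb{F}}_i,\tilde{\mathbb{\Phi}}_i)$ of \eqref{eq:uiversalparabolic} together with the $\To\times\mu_p$-equivariant isomorphism \eqref{mu_pequi} over $\M_i\times X^f$. The delicate point in the family version is keeping the $\mu_p$- and $\To$-linearizations coherent and ensuring that the universal flag is a genuine subbundle, which holds precisely because the eigenvalue type $i$ is constant along $\M_i$ --- exactly the characterization established above.
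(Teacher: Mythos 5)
Your proposal is correct in outline, but it takes a genuinely different route from the paper. You realize the identification $\M_i\rightarrow\tM_i$ through the classical invariant-direct-image correspondence between $\f$-linearized bundles on $X$ and parabolic bundles on $\X$ (in the style of Mehta--Seshadri, Furuta--Steer and Biswas), extended to Higgs fields; the paper instead defines $\varXi_i$ (Definition \ref{def:varXi}) as a twist by the divisor $S_i$ followed by the Hecke transformations of Proposition \ref{def:HeckeTransformations}, which trivialize the isotropy action on the fibres over $X^f$, and only then descends by taking the quotient by $\f$. The two mechanisms are equivalent---the Hecke modifications are exactly the local moves underlying the invariant pushforward---but each buys something. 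Your route handles the Higgs field cleanly: in an $\f$-equivariant frame at a fixed point with scalar isotropy, the invariance equation forces $\phi(z)=z^{p-1}\psi(z^p)$, which gives at once the regularity of the descended field at $\D\setminus\D_i$; be aware this is a genuine step and not a mere check, since naive Riemann--Hurwitz descent only produces a field valued in $K_{\X}(\D)$, whereas membership in $\tM_i$ requires poles along $\D_i$ only (this is the content of the paper's inductive vanishing argument proving \eqref{eq:paj2}). The paper's route keeps the normalization bookkeeping explicit: it is engineered so that the image has exactly the weights $w_{i,2}(x)=[2m_xi_x]_p/p$ and determinant $\tilde{\Lambda}_i=\tilde{\Lambda}(D_i)$ of Definition \ref{def:beta}, and so that the same Hecke machinery is reused downstream, for the stability comparison (Lemma \ref{lem:HeckeLinebundles} and the slope identity \eqref{eq:sloperelations}), for the identification of the torus-fixed components (Corollary \ref{Cor:B}), and for the family construction \eqref{eq:uiversalparabolic} that the second bullet of the theorem explicitly refers to. Since you construct an a priori different map, you would still owe the reader this normalization matching: your ``standard dictionary'' does yield $[2m_xi_x]_p/p$, because eigenvalue exponents must be converted to exponents of the local rotation $df_x=\zeta^{n_x}\cdot\Id$ via $m_x\equiv -n_x^{-1}\bmod p$, but that conversion is precisely the bookkeeping the Hecke construction makes visible. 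Finally, for the stability crux you rightly single out, the resolution is the one the paper uses: work only with $\Phi$-invariant sub-line bundles, compare slopes under descent, and invoke uniqueness of the Higgs Harder--Narasimhan filtration to conclude that a destabilizing Higgs sub-line bundle is automatically $\f$-invariant; this is exactly what renders harmless your (correct) observation that the underlying vector bundle of a stable Higgs bundle need not be semistable.
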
 
As a universal parabolic Higgs bundle is essentially unique if it exists, the point of the second bullet of Theorem \ref{thm:M^f} is the explicit construction of it.

 Theorem \ref{thm:M^f} is of independent interest. A similar identification was obtained in the work \cite{garcia-pradaActionMappingClass2020} in the form of a bijection between $\M^f$ and a moduli space of parabolic Higgs bundles on $\X$. However, they work with a moduli space of parabolic principal  Higgs bundles, which does not admit a $\To$-action, and they don't consider the equivariant universal bundle.

\subsection{Localization}  \label{sec:introlocalization}

 For all $i \in \mathcal{I}$ write $\mathcal{J}_i=\pi_0(\tM_i^{\To})$ and denote the components of $\tM_i^{\To}$ by $\{\tM^{\To}_{i,j}\}_{j \in \mathcal{J}_i}$. These components are determined in Corollary \ref{Cor:B}.  
 Fix $i \in \mathcal{I}$ and $j\in \mathcal{J}_i$. For any algebraig group $H$, denote by $K^0_{H}$ the functor that assigns the $K$-theory ring of $H$-equivariant algebraic vector bundles to a smooth projective $H$-variety. Write $K_{i,j}=K^0_{\To \times \mu_p}(\tM^{\To}_{i,j})$. Below equivariant means equivariant with respect to $\To\times \mu_p$.  Let $\Z[\mu_p]$ denote the representation ring of $\mu_p$. Let $\hat{\zeta}$ be the standard representation of $\mu_p$. We have the following identifications \footnote{We use the standard K-theory convention of denoting the tensor product by the multiplication symbol.}
 \begin{equation} \label{decomp}
K_{i,j} \cong \Z[\mu_p]\otimes_{\Z} K^0_{\To}(\tM^{\To}_{i,j})\cong \bigoplus_{l=0}^{p-1} \hat{\zeta}^l\cdot K^0_{\To}(\tM^{\To}_{i,j})  \cong \bigoplus_{l=0}^{p-1}  \hat{\zeta}^l \cdot K^0(\tM^{\To}_{i,j})[t,t^{-1}].
\end{equation}
  Let $\hat{K}_{i,j}=K_{i,j}[[t^{-1}]].$ Let $S\subset \Z[\mu_p]$ denote the multiplicatively closed subset generated by the elements $1-\hat{\zeta}^l$ where $l$ runs through the set $\{1,...,p-1\}$. Towards stating Theorem \ref{ThmMain} we will first introduce a class $\tilde{\chi}_{i,j} \in S^{-1}\cdot \hat{K}_{i,j}$ (defined in equation \eqref{def:chigamma}) and a certain cohomology class $\tilde{\omega}_i \in H^2_{\To}(\tM_{i})$ (defined in equation \eqref{eq:omega_i}). The next paragraphs introduces the notation necessary to define these classes.
  
  For every $W \in K_{i,j}$, denote by $W^{+}$ (resp. $W^{-}$) the strictly positive (resp. negative) part w.r.t. the $\To$-grading and denote by $W^{\To}$ the constant part w.r.t the $\To$-grading. Denote by $W^{\To\times \mu_p}$ the projection onto the $(\To\times \mu_p)$-invariant part. Denote by $s$ the equivariant operation of taking the sum of symmetric powers, and denote by $\lambda$ the equivariant operation of taking the alternating sum of exterior powers.Then $\lambda((W^{\To}/W^{\To\times \mu_p})^{*})^{-1}$ exists as a class in $S^{-1}\cdot K_{i,j}$ (see Section \ref{sec:Localization} on localization), and  we define the equivariant class
\begin{equation} \label{def:Omega}
\Omega(W)=\det(W^{-})\cdot s(W^{-})\cdot  s((W^+)^*) \cdot \lambda((W^{\To}/W^{\To\times \mu_p})^{*})^{-1} \in S^{-1}\cdot \hat{K}_{i,j}.
\end{equation}
 Consider the universal parabolic Higgs bundle $\tilde{\Eb}_i \rightarrow \tM_i \times \X$ provided by Theorem \ref{thm:M^f}. We equip the restriction of $\tilde{\Eb}_i$ to $\tM_i \times \D$ with the $(\To\times \mu_p)$-equivariant structure induced from \eqref{mu_pequi}. We now present the $\mu_p$-weight subbundle decomposition of the restriction of $\tilde{\mathbbm{U}}_i=\End_0(\tilde{\Eb}_i)$ to $\tM_i \times \D$, which is computed at the end of Section \ref{sec:Hecke}. Let  $x \in \D$. 
For $x  \notin \D_i$, the $\mu_p$-action on $\tilde{\mathbb{U}}_{i,x}$ is trivial.  Assume now that $x \in \D_i$.  Consider the line bundle on $\tM_i$ given by $\Lb_{i,x}=\Hom(\tilde{\mathbb{F}}_{i,x},(\tilde{\Eb}_{i,x}/\tilde{\mathbb{F}}_{i,x}))$ which was introduced in \cite{biswasCanonicalGeneratorsCohomology1996}. We have that
\begin{equation} \label{uni}
\tilde{\mathbb{U}}_{i,x}=\bigoplus_{\epsilon=-1,0,1} \Lb^{\epsilon}_{i,x} \otimes \hat{\zeta}^{-2\epsilon i_x}. 
\end{equation} For all $x\in X^f$ define the integer $n_x \in \{1,...,p-1\}$ by the condition that $df_x= \zeta^{n_x} \cdot \Id$, and write $\tilde{\mathbb{U}}_{i,j,x}$ for the restriction of $ \tilde{\mathbb{U}}_{i,x} $ to $\tM^{\To}_{i,j}$. Define the equivariant class
\begin{equation} \label{def:V_x} 
\tilde{V}_{i,j}= \sum_{x \in \D} \tilde{\mathbb{U}}_{i,j,x}  \cdot(1-t\cdot \hat{\zeta}^{-n_x})  \sum_{l=1}^{p-1}  l \hat{\zeta}^{-l  n_x}.
\end{equation}
 Denote by $T_{i,j} \tM_i$ the restriction of $T \tM_i$ to $\tM^{\To}_{i,j}$. Let $(\tilde{V}_{i,j})^{\mu_p}$ be the projection of $\tilde{V}_{i,j}$ onto the $0$'th summand in \eqref{decomp}. Denote by $\Phi_p$ the $p$'th cyclotomic polynomial. In the proof of Theorem \ref{ThmMain} we argue the existence of an equivariant class $\tilde{\chi}_{i,j} \in S^{-1}\cdot\hat{K}_{i,j}$ given by the following formula \begin{align} \label{def:chigamma}\tilde{\chi}_{i,j}&=\Omega(T_{i,j} \tM_i\cdot \Phi_p(\hat{\zeta})) \cdot \Omega(\tilde{V}_{i,j}-(\tilde{V}_{i,j})^{\mu_p}\cdot\Phi_p(\hat{\zeta}))^{\frac{1}{p}} ,\end{align}
 with the $p$'th root choosen such that the leading term (with respect to the $t^{-1}$-grading) of the equivariant $\rank(\tilde{\chi}_{i,j})$ is equal to the element $\tilde{\zeta}_{i,j}$ introduced in Definition \ref{def:zetagamma}.
 
 Let $[\X]$ be the fundamental class of $\X$ in homology. Define the $\To$-equivariant cohomology classes $\tilde{\alpha}_i=c_2(\tilde{\mathbb{U}}_i) \cap [\X]/2$ and $\tilde{\delta}_{i,x}=c_1(\mathbb{L}_{i,x}), x \in X^f$. Define the equivariant cohomology class 
 \begin{equation} \label{eq:omega_i} \tilde{\omega}_i= p \tilde{\alpha}_i+p\sum_{x \in \D_i} w_{i,2}(x)\tilde{\delta}_{i,x}.
 \end{equation} By definition of the weight function $w_i$ (given in Definition \ref{def:ParabolicStructure}) the class $\tilde{\omega}_i$ is an integral class. Write $\tilde{\omega}_{i,j}$ for the restriction of this class to $\tM_{i,j}$. Denote by $\ch$ the equivariant Chern character. Let $m_i \in \{0,..,p-1\}$ the unique integer such that the $f$-action on $\LD_{\mid \M_i}$ is given by multiplication by $\zeta^{m_i}$. It will be shown in the proof of Theorem \ref{ThmMain} that 
 \begin{equation} \label{eq:identificationofinducedTLibebundle}
 \ch(({\varXi_{i}}^{-1})^{ *}(\LD_{\mid \M_i}))=\hat{\zeta}^{m_i}\exp(\tilde{\omega}_i).
 \end{equation} Let $\tilde{\n}_i \subset \tM_i$ be the moduli space of holomorphic parabolic vector bundles on $(\X,\D_i)$ with the same weights. The restriction of $\tilde{\omega}_i$ to $\tilde{\n}_i$ is equal to $p$ times the cohomology class of the natural symplectic form of $\tilde{\n}_i$ coming from its identification with a moduli space of flat $G$-connections on $\X \setminus \D_i$ with prescribed meridional holonomy around the punctures determined by the weight function $w_i$. This follows from \cite[Remark 6.6]{andersenWittenReshetikhinTuraev2013}. Let $\tilde{\nu}_{i,j} $ be the integer introduced in Definition \ref{def:zetagamma}. We can finally state our main result.
\begin{theorem} The automorphism equivariant Hitchin index localizes to \label{ThmMain}
	\begin{equation} \label{eq:Main}
	\chi_{\To}(\M,\LD^k,f)= \sum_{i\in \mathcal{I}}\sum_{j \in \mathcal{J}_i} (-1)^{\tilde{\nu}_{i,j}} \zeta^{km_{i}}\int_{\tilde{\M}^{\To}_{i,j}}  \ch(\tilde{\chi}_{i,j}  )\cdot\exp(k \tilde{\omega}_{i,j})\cdot \Td \tilde{\M}^{\To}_{i,j}.
	\end{equation}
\end{theorem}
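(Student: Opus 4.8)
The plan is to compute \eqref{eq:index} as a fixed-point localization for the commuting actions of $\To$ and $\f$ on the non-compact variety $\M$, combining the holomorphic $\To$-localization of Wu \cite{wuInstantonComplexHolomorphic2003} with the holomorphic Lefschetz fixed point formula \cite{atiyahIndexEllipticOperators1968}. The first observation is that $\chi_{\To}(\M,\LD^k,f)$ is the non-positive $\To$-weight part of the $\To\times\f$-equivariant holomorphic Euler characteristic of $\LD^k$; the truncation to $m\le 0$ together with Wu's theorem is precisely what renders this Euler characteristic well-defined despite the non-compactness of $\M$, since the $\To$-action by scaling the Higgs field is semiprojective, with compact fixed locus and well-defined limits under the downward flow. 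The combined fixed locus $\M^{\To\times\f}$ has components indexed by pairs $(i,j)$ with $i\in\mathcal{I}$ and $j\in\mathcal{J}_i$, and by Theorem \ref{thm:M^f} together with the isomorphism $\varXi_i$ these components are identified with the compact smooth varieties $\tM^{\To}_{i,j}$.

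Granting this, the localization produces a sum over $(i,j)$ of Lefschetz--Riemann--Roch contributions, each of the shape $\int_{\tM^{\To}_{i,j}} \ch(\text{line bundle})\cdot \Td(\tM^{\To}_{i,j})/(\text{normal-bundle denominator})$. The line-bundle factor is supplied by \eqref{eq:identificationofinducedTLibebundle}: pulling $\LD^k_{\mid\M_i}$ back through $\varXi_i^{-1}$ yields $\hat{\zeta}^{km_i}\exp(k\tilde{\omega}_i)$, which accounts for the scalars $\zeta^{km_i}$ and the factor $\exp(k\tilde{\omega}_{i,j})$ in \eqref{eq:Main}. The remaining work is to identify the denominator with $\ch(\tilde{\chi}_{i,j})$, where $\tilde{\chi}_{i,j}$ is given by \eqref{def:chigamma}.

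The key step is the analysis of the $\To\times\f$-equivariant normal bundle of $\M_{i,j}$ inside $\M$ via the deformation theory of Higgs bundles. I would split the normal data into the part moving under $\To$ but fixed by $\f$ and the part moving under $\f$. The former lives inside $\M^f=\tM$, and the inverse of its equivariant Euler class, resummed over non-positive $\To$-weights, is exactly the operation $\Omega$ of \eqref{def:Omega}: the factors $\m(W^-)$, $\m((W^+)^*)$ and $\det(W^-)$ encode the geometric series in the $\To$-moving directions, while $\lambda((W^{\To}/W^{\To\times\mu_p})^*)^{-1}$ inverts the Lefschetz class of the $\f$-fixed, $\To$-constant directions; this yields the factor $\Omega(T_{i,j}\tM_i\cdot\Phi_p(\hat{\zeta}))$. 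The directions moving under $\f$ are governed, through the pushforward to $\X$ and the Hecke description of Section \ref{sec:Hecke}, by the weight decomposition \eqref{uni} of $\tilde{\mathbb{U}}_i$ at the marked points; assembling the eigenvalue denominators over the nontrivial $\f$-eigenvalues produces the class $\tilde{V}_{i,j}$ of \eqref{def:V_x} and the second factor $\Omega(\tilde{V}_{i,j}-(\tilde{V}_{i,j})^{\mu_p}\cdot\Phi_p(\hat{\zeta}))^{1/p}$ of \eqref{def:chigamma}.

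The hardest part will be this last identification: justifying the $p$-th root $\Omega(\cdots)^{1/p}$ and fixing the attendant sign and branch. The $p$-th root reflects that the directions normal to $\M^f$ arrange into $p$-fold packets governed by the $\mu_p$-action on the deformation complex over the quotient $\X$, so that the genuine Lefschetz contribution is the $\mu_p$-average of the localization class; making this rigorous requires the explicit $\To\times\mu_p$-equivariant computation of the first hypercohomology of the Higgs deformation complex at a fixed point and its $\mu_p$-weight bookkeeping, which is where the factors $\Phi_p(\hat{\zeta})$ and $\sum_{l=1}^{p-1}l\hat{\zeta}^{-ln_x}$ enter. The sign $(-1)^{\tilde{\nu}_{i,j}}$ and the choice of branch of the root are then pinned down, as stated after \eqref{def:chigamma}, by matching the leading $t^{-1}$-term of $\rank(\tilde{\chi}_{i,j})$ with the normalization $\tilde{\zeta}_{i,j}$ of Definition \ref{def:zetagamma}. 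Finally I would verify convergence of each localized series in $\hat{K}_{i,j}=K_{i,j}[[t^{-1}]]$ and that the $S$-localization is invertible as claimed, so that the whole right-hand side of \eqref{eq:Main} is well-defined.
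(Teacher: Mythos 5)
Your overall architecture coincides with the paper's: Proposition \ref{pro:generalloc} (Wu's theorem combined with the Atiyah--Bott formula) reduces everything to two identities on each component, namely \eqref{eq:Closed} for the normal-bundle class and \eqref{eq:inducedlinebundle} for the line bundle, and your plan correctly anticipates the role of the branch normalization $\tilde{\zeta}_{i,j}$. However, there are two genuine gaps. First, you invoke \eqref{eq:identificationofinducedTLibebundle} as if it were available, but in the paper that identity is \emph{proved inside} the proof of Theorem \ref{ThmMain}, so citing it is circular. Its non-equivariant part follows from the isomorphism $\Pic(\tM_i)\cong\Pic(\tilde{\n}_i)$ and the curvature identity \eqref{eq:curvatureequation}, but the genuinely new content is $\To$-equivariant: using $H^*_{\To}(\M)\cong H^*(\M^{\To})[u]$ one must check, on every component $\M_{i,j}$ with $j\in\mathcal{J}_{i,c}$, that the $u$-coefficient of $c_1(\LD)_{\mid \M_{i,j}}$ (which equals $\chi+c$ by Hausel's weight computation) agrees with that of $\tilde{\omega}_{i\mid\tM_{i,j}}$; this requires the divisor identity \eqref{eq:Dij}, the values $b_i(x)=[2m_xi_x]_p$ and $\epsilon_{j,x}$, and Hurwitz's formula \eqref{eq:Hurwitz}. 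Your proposal omits this computation entirely.

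Second, your mechanism for \eqref{eq:Closed} --- that the normal directions to $\M^f$ form ``$p$-fold packets'' whose Lefschetz contribution is a $\mu_p$-average --- is not how the $p$-th power and root actually arise, and taken literally it would not produce $\Omega(\cdot)^{1/p}$. The paper's argument runs differently: one writes $T_{i,j}\M=\pi_!\bigl(\mathbb{U}\cdot(\pi^*(K_X)\cdot t-1)\bigr)$ from the Higgs deformation complex, then applies Nielsen's localization (Theorem \ref{thmNielsen}) \emph{along the curve factor}, localizing the pushforward at $\M_{i,j}\times X^f$, which yields $T_{i,j}\M=\sum_{x\in X^f}\mathbb{U}_{\mid\M_{i,j}\times\{x\}}(1-t\hat{\zeta}^{-n_x})/(\hat{\zeta}^{-n_x}-1)$ only in the localized ring $S^{-1}\cdot K^0_{\To\times\mu_p}(\M_{i,j})$, where the symmetric-power operation $s$ (hence $\Omega$) is undefined. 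The crucial trick is to multiply by $p$ and use $p(\hat{\zeta}^u-1)^{-1}=\sum_{l=1}^{p-1}l\hat{\zeta}^{ul}$ to clear denominators, giving $p\cdot T_{i,j}\M=V_{i,j}\bmod\ker(L)$, then to lift this congruence to the exact identity \eqref{eq:Vigtig} in honest equivariant $K$-theory via Lemma \ref{nytLemma}; applying $\Omega$ then gives the $p$-th power identity \eqref{a}, and the root is extracted by the leading-term computation \eqref{eq:leadingterm} together with the uniqueness statement of Lemma \ref{lemmapthroot} (which also forces $(-1)^{\nu_{i,j}}=(-1)^{\tilde{\nu}_{i,j}}$ and the integrality of $\tilde{a}_{i,j,l},\tilde{b}_{i,j,l}$). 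You correctly guess where $\Phi_p(\hat{\zeta})$ and $\sum_l l\hat{\zeta}^{-ln_x}$ must enter, but without the multiply-by-$p$, Nielsen, and lifting steps the identity \eqref{eq:Closed} is asserted rather than proved. Note also that your attribution of the factor $\Omega(T_{i,j}\tM_i\cdot\Phi_p(\hat{\zeta}))$ to the $\f$-fixed normal directions is off: since $\Phi_p(\hat{\zeta})=\sum_{l=0}^{p-1}\hat{\zeta}^l$, this factor is an induced class produced by the $p$-th-power bookkeeping, not the contribution of an $\f$-fixed/$\f$-moving splitting of the normal bundle.
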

The decomposition of $\tilde{\mathbb{U}}_{i,x}$ given in \eqref{uni} determines an expression for $\tilde{\chi}_{i,j}$ via canonical classes \cite{atiyahYangMillsEquationsRiemann1983,biswasCanonicalGeneratorsCohomology1996}. In a planned follow up article, we will derive from Theorem \ref{ThmMain} an explicit formula for the series \eqref{eq:index}. See Remark \ref{rem:intersectionpairings} below. Further, Theorem \ref{ThmMain} have interesting corollaries connected to the equivariant Hitchin index studied in \cite{andersenVerlindeFormulaHiggs2017a,halpern-leistnerEquivariantVerlindeFormula2016} and to quantum topology. We now present these.


\subsubsection{The Galois Case} Consider the case $X^f =\emptyset$. In this case the projection $X \rightarrow \X$ is a Galois covering of degree-$p$. In this case the line bundle $\LD\rightarrow \M$ is defined slightly differently than in equation \eqref{def:det}, as explained in Section \ref{sec:Galois}. Let $\tilde{g}$ be the genus of $\X$ and assume that $\tilde{g}\geq 2$. Denote by $\tM$ the moduli space of rank two Higgs bundles on $\X$ with fixed determinant $\Lambda/ \f$ (this quotient is a well-defined line bundle as explained below). 
 In this case $\M^f$ is connected and the isomorphism from Theorem \ref{thm:M^f} gives an isomorpshim of $\To$-varieties $\varXi:\M^f \rightarrow\tM.$ Further, in this case the class $\tilde{\omega}_i$ introduced in \eqref{eq:omega_i} is the first Chern class of the $p$'th tensor power of the $\To$-equivariant determinant line bundle $\tilde{\LD}\rightarrow \tM$, constructed in equation \eqref{def:tildedet}  through the same formula as in \eqref{def:det} (but with reference to  the universal Higgs bundle over $\tM\times \X$).  For all $m\in \Z, q \in \Z_{+}$, denote by $H_m^q(\tM,\tilde{\LD^k}) $ the $t^m$-weight sub-space of $H^q(\tM,\tilde{\LD}^k) $ and define the equivariant Hitchin index
\begin{equation} \label{eq:HitchinIndex}
\chi_{\To}(\tM,\tilde{\LD}^k)(t)=\sum_{m \leq 0} \sum_{q\geq 0} (-1)^q  \dim(H_m^q(\tM,\tilde{\LD}^k))\cdot t^m.
\end{equation}
The series \eqref{eq:HitchinIndex} was studied for the moduli stack of Higgs bundles of any rank by the first author, Gukov and Pei in \cite{andersenVerlindeFormulaHiggs2017a} resulting in a beautiful formula in terms of Lie theory, and it was studied for the rank two case by Halpern-Leistner in \cite{halpern-leistnerEquivariantVerlindeFormula2016}. In \cite{andersenVerlindeFormulaHiggs2017a} it was also shown that each of the coefficients of the series \eqref{eq:HitchinIndex} is a $(1+1)$-dimensional TQFT. Moreover, the series \eqref{eq:HitchinIndex} is a refinement of the celebrated Verlinde polynomial, which has been thoroughly studied, see e.g. \cite{jeffreyIntersectionTheoryModuli1998, thaddeusConformalFieldTheory1992,verlindeFusionRulesModular1988,wittenTwoDimensionalGauge1992,zagierCohomologyModuliSpaces1995}.

In this article, we pay special attention to the Galois case, and we prove the following result as a corollary of \ref{ThmMain}.
\begin{corollary} \label{thm:Galois} In the case that $X^f$ is empty and $\tilde{g}\geq 2$, we have that
\begin{equation}	\chi_{\To}(\M,\LD^k,f)(t)= 	\chi_{\To}(\tM,\tilde{\LD}^k)(t^p).
\end{equation} 
\end{corollary}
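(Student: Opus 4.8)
The plan is to specialize the localization formula \eqref{eq:Main} of Theorem \ref{ThmMain} to the Galois case and to recognize the resulting fixed-point sum as the substitution $t\mapsto t^p$ of the $\To$-localization of the ordinary equivariant Hitchin index \eqref{eq:HitchinIndex}. First I would record the simplifications forced by $X^f=\emptyset$. Since $\D=\pi(X^f)=\emptyset$, the index set $\mathcal{I}$ is a single element $i_0$ with $\M_{i_0}=\M^f$ and $\varXi_{i_0}=\varXi\colon \M^f\to\tM$; moreover the sum in \eqref{def:V_x} is empty, so $\tilde{V}_{i_0,j}=0$ and hence $\Omega(\tilde{V}_{i_0,j}-(\tilde{V}_{i_0,j})^{\mu_p}\Phi_p(\hat{\zeta}))^{1/p}=\Omega(0)^{1/p}=1$ (the leading-term condition selects the trivial root). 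Thus $\tilde{\chi}_{i_0,j}=\Omega(T_{i_0,j}\tM\cdot\Phi_p(\hat{\zeta}))$. Finally $\tilde{\omega}_{i_0}=c_1(\tilde{\LD}^p)=p\,c_1(\tilde{\LD})$ as recalled for the Galois case in Section \ref{sec:Galois}, and the $\tM^{\To}_{i_0,j}$ are exactly the components of $\tM^{\To}$.

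Next I would write down, by applying the same $\To$-holomorphic Lefschetz localization \cite{wuInstantonComplexHolomorphic2003} used to prove Theorem \ref{ThmMain} but now directly to the smooth $\To$-variety $\tM$ with $\tilde{\LD}$, the identity
\[
\chi_{\To}(\tM,\tilde{\LD}^k)(t)=\sum_{j}(-1)^{\nu_j}\int_{\tM^{\To}_{i_0,j}}\ch\bigl(\Omega_{\To}(T_{i_0,j}\tM)\bigr)\cdot\exp\!\bigl(k\,c_1(\tilde{\LD})\bigr)\cdot\Td\tM^{\To}_{i_0,j},
\]
where $\Omega_{\To}(W)=\det(W^{-})\,s(W^{-})\,s((W^{+})^{*})$ is the purely $\To$-equivariant analogue of \eqref{def:Omega} (no $\lambda$-factor). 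The corollary then reduces to a component-wise identity: for each $j$, the Galois integrand from \eqref{eq:Main} equals the image under $t\mapsto t^p$ of the integrand above. The engine is the cyclotomic identity $\prod_{l=0}^{p-1}(1-\zeta^l z)=1-z^p$ together with the evaluation $\hat{\zeta}\mapsto\zeta$ dictated by the Lefschetz trace. Writing $W=T_{i_0,j}\tM=W^{\To}\oplus W^{+}\oplus W^{-}$ by $\To$-weight, the factor $\Phi_p(\hat{\zeta})$ tensors each summand by the regular representation, and the identity collapses the $\To$-nonzero contributions $\det((W\Phi_p)^{-})\,s((W\Phi_p)^{-})\,s(((W\Phi_p)^{+})^{*})$ to $\ch(\Omega_{\To}(W))$ with $t$ replaced by $t^p$ and each Chern root of $W^{\pm}$ replaced by $p$ times itself (the residual phase $\zeta^{\rank W^{-}\cdot p(p-1)/2}$ from $\det((W\Phi_p)^{-})$ being trivial since $p\mid p(p-1)/2$). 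Likewise $\exp(k\tilde{\omega}_{i_0})=\exp(kp\,c_1(\tilde{\LD}))$ rescales $c_1(\tilde{\LD})$ by $p$.

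The remaining piece is the $\To$-fixed but $\mu_p$-moving term $\lambda((W^{\To}/W^{\To\times\mu_p})^{*})^{-1}$. On $\tM^{\To}_{i_0,j}$ one has $W^{\To}/W^{\To\times\mu_p}=T\tM^{\To}_{i_0,j}\otimes\sum_{l=1}^{p-1}\hat{\zeta}^{l}$, so dividing the full product by its $l=0$ factor gives, with $c_r$ the Chern roots of $T\tM^{\To}_{i_0,j}$,
\[
\ch\bigl(\lambda((W^{\To}/W^{\To\times\mu_p})^{*})^{-1}\bigr)\cdot\Td\tM^{\To}_{i_0,j}=\prod_{r}\frac{c_r}{1-e^{-pc_r}}=p^{-\dim\tM^{\To}_{i_0,j}}\,\Td\tM^{\To}_{i_0,j}\big|_{c_r\mapsto pc_r}.
\]
Hence the entire Galois integrand equals $p^{-\dim\tM^{\To}_{i_0,j}}$ times the uniform $p$-rescaling of cohomology (and $t\mapsto t^p$) of the ordinary integrand; since integrating a $p$-rescaled class against $[\tM^{\To}_{i_0,j}]$ multiplies the result by $p^{\dim\tM^{\To}_{i_0,j}}$, the two powers of $p$ cancel and the $j$-th integral becomes the $t\mapsto t^p$ substitution of its ordinary counterpart. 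Summing over $j$ yields $\chi_{\To}(\tM,\tilde{\LD}^k)(t^p)$.

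I expect the main obstacle to lie in the bookkeeping of signs and the constant phase rather than in these cohomological identities. Concretely I must check that $(-1)^{\tilde{\nu}_{i_0,j}}$ agrees with the ordinary sign $(-1)^{\nu_j}$, which works because $\rank(W\Phi_p)^{-}=p\,\rank W^{-}$ and $p$ is odd, so $(-1)^{p\,\rank W^{-}}=(-1)^{\rank W^{-}}$; and, crucially, that the phase $\zeta^{km_{i_0}}$ is trivial, i.e.\ $m_{i_0}\equiv 0\pmod p$. Establishing $m_{i_0}=0$ requires unwinding the $\mu_p$-action on the fibre of the Galois determinant bundle $\LD$ from its definition in Section \ref{sec:Galois} via \eqref{eq:identificationofinducedTLibebundle}. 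A secondary point to pin down is that the $\To$-localization of $\chi_{\To}(\tM,\tilde{\LD}^k)$ and the $\To\times\mu_p$-localization underlying \eqref{eq:Main} use matching fixed loci and orientation conventions, so that the component-wise comparison is legitimate.
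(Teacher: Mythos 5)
Your proposal is correct and takes essentially the same route as the paper's proof: specialize Theorem \ref{ThmMain} to the single index $i_0$ with $\tilde{V}_{i_0,j}=0$, localize $\chi_{\To}(\tM,\tilde{\LD}^k)$ over the components of $\tM^{\To}$ by Wu's theorem, and match integrands componentwise via the cyclotomic identity, which yields both the rescaling $\omega(W\cdot\Phi_p(\hat{\zeta}))=\omega(W)(t^p,py)$ and the factor $p^{-\dim}\Td(pz)$ coming from the $\lambda$-term, with homogeneity of the top-degree integral cancelling the powers of $p$. The final points you flag (the odd-$p$ sign parity and the triviality of the phase $\zeta^{km_{i_0}}$) are disposed of in the paper in the same spirit, the latter via the equivariant isomorphism $(\varXi^{-1})^*(\LD)\cong\tilde{\LD}^{\otimes p}$.
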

In words, the automorphism equivariant Hitchin index of a Galois transformation of a degree-$p$ Galois covering of a curve, is equal to the equivariant Hitchin index of the base curve evaluated at $t^p$. 

\subsubsection{Determination in terms of Fixed Point Data and Seifert Invariants of The Mapping Torus} For each $\xi=(\xi_1,\xi_2)\in \mu_p^2$ define $d_{\xi}$ to be the number of $x \in X^f $ such that $\zeta^{-n_x}=\xi_1 $ and $\alpha_x=\xi_2 $. All of these are determined by the $\f$-equivariant structure on $\Lambda$, and we set $d_{\Lambda}=(d_{\xi})_{\xi \in \mu_p^2} $.
\begin{definition} \label{def:fixedpointdata} The fixed point data of $[\Lambda] \in K_{\f}^0(X)$ is the tuple $D_{\Lambda}=(g, d_{\Lambda})$.\end{definition}  
Define the integer $l_0\in \{0,...,p-1\}$ by $\alpha_{x_0}=\zeta^{l_0}$, where $x_0$ is the marked fixed point that enters into the definition of the determinant line bundle $\LD\rightarrow \M$ given in equation \eqref{def:det}. Note $\lvert X^f \rvert=\sum_{\xi} d_{\xi}$. Observe that the genus of $\X$ is determined by $D_{\Lambda}$ via Hurwitz's formula. The cohomological pairings in equation \eqref{eq:Main} are determined by the fixed point data and  $l_0$. Thus we have the following 
\begin{corollary} \label{cor:determinationnumericalfixedpointdata}	The series $\chi_{\To}(\M,\LD^k,f)$ is determined by $D_{\Lambda}$. \end{corollary}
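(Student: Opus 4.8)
The plan is to read the corollary directly off the localization formula of Theorem~\ref{ThmMain}: I would exhibit every factor appearing in the finite sum \eqref{eq:Main} as a quantity that depends only on the discrete invariant $D_{\Lambda}=(g,d_{\Lambda})$. Concretely, I would split the ingredients of each summand into three groups and treat them in turn: the combinatorial data, namely the index sets $\mathcal{I}$ and $\mathcal{J}_i$ and the signs $\tilde{\nu}_{i,j}$; the geometric data, namely the compact fixed loci $\tM^{\To}_{i,j}$ together with the classes $\ch(\tilde{\chi}_{i,j})$, $\exp(k\tilde{\omega}_{i,j})$ and $\Td\,\tM^{\To}_{i,j}$ that are integrated over them; and the scalar prefactor $\zeta^{km_i}$.

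First I would dispose of the combinatorial and geometric groups. The spaces $\tM_i$ are moduli of rank-two parabolic Higgs bundles on $(\X,\D_i)$ with a prescribed determinant degree and the weights $w_i$ of Definition~\ref{def:beta}, and the input for their construction is the genus $\tilde g$ of $\X$, the number and weights of parabolic points, the determinant degree, and the local $\mu_p$-weights $(n_x,\alpha_x,i_x)_{x\in X^f}$. By Hurwitz's formula $2g-2=p(2\tilde g-2)+(p-1)\lvert X^f\rvert$ together with $\lvert X^f\rvert=\sum_{\xi}d_{\xi}$, the genus $\tilde g$ is a function of $D_{\Lambda}$; and by the definition of $d_{\Lambda}$ the multiset $\{(\zeta^{-n_x},\alpha_x)\}_{x\in X^f}$, hence the weights and the integers $i_x$ attached to a component $i\in\mathcal{I}$, is exactly what $d_{\Lambda}$ records. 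Since these are precisely the data determining $\tM_i$, the universal triple $(\tilde{\Eb}_i,\tilde{\mathbb{F}}_i,\tilde{\mathbb{\Phi}}_i)$, the decomposition \eqref{uni}, the classes $\tilde{\chi}_{i,j},\tilde{\omega}_{i,j}$, the sets $\mathcal{I},\mathcal{J}_i$ and the signs $\tilde{\nu}_{i,j}$ are all pinned down by $D_{\Lambda}$. It then remains to observe that the integrals themselves do not see the continuous parameters (the complex structure of $X$ and the positions of the branch points): as these vary they trace out a connected family over a product of Teichmüller space with a configuration space, the compact $\To$-fixed loci $\tM^{\To}_{i,j}$ assemble into a locally trivial family over this base, the universal classes extend across it, and a cohomological pairing is constant along a connected family. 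Hence each integral in \eqref{eq:Main} is a topological quantity determined by the discrete data, i.e.\ by $D_{\Lambda}$.

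The delicate group is the prefactor $\zeta^{km_i}$, and here I would compute $m_i$ from the two tensor factors of \eqref{def:det}. By Theorem~\ref{thm:M^f} the eigenvalues of $f$ on $\Eb_{\mid\M_i\times\{x\}}$ are $\sqrt{\alpha_x}^{-1}\zeta^{\pm i_x}$, so $\det(\Eb_{\mid\M_i\times\{x_0\}})$ carries $f$-weight $\alpha_{x_0}^{-1}=\zeta^{-l_0}$ \emph{independently of the component} $i$ (the exponents $\pm i_{x_0}$ cancel in the determinant); thus the first factor $\det(\Eb_{\mid\M\times\{x_0\}})^{1-g}$ contributes the single summand $l_0(g-1)$ to every $m_i$. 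The second factor $\det((\pi_\M)_!\Eb)[1]_{\mid\M_i}$ has its $f$-weight computed by the holomorphic Lefschetz/equivariant Riemann--Roch theorem as a sum of local terms over $X^f$, each a function of $(\alpha_x,n_x,i_x)$; this is a symmetric function of the multiset recorded by $d_{\Lambda}$ and of the component $i$, hence a function of $D_{\Lambda}$. Writing $m_i=l_0(g-1)+m_i^{(2)}$, the part $\zeta^{km_i^{(2)}}$ is determined by $D_{\Lambda}$, while the remaining scalar $\zeta^{kl_0(g-1)}$ is common to all summands and therefore pulls out of the sum \eqref{eq:Main}.

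The hard part will be to show that this last uniform scalar $\zeta^{kl_0(g-1)}$ is itself controlled by $D_{\Lambda}$, equivalently that the value of \eqref{eq:Main} is independent of which fixed point is taken as the marked point $x_0$. The plan is to prove marked-point independence directly: replacing $x_0$ by $x_0'$ twists $\LD_{\mid\M_i}$ by the single global character $\zeta^{(g-1)(l_0'-l_0)}$, uniformly in $i$, and one must check that this shift is invisible to the index, either because the equivariant structure of $[\Lambda]\in K_{\f}^0(X)$ constrains $l_0(g-1)\bmod p$ to be a function of $d_{\Lambda}$, or because it is absorbed by the canonical normalisation of the $f$-action on $\Eb$ furnished by Proposition~\ref{pro:uniquelift}. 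I expect this to be the only substantial step; once it is in place, combining it with the combinatorial, geometric and Riemann--Roch analyses above shows that every summand, and hence the whole of \eqref{eq:Main}, is a function of $D_{\Lambda}$, which is exactly the assertion of the corollary.
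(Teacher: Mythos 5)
Your treatment of the first two groups of ingredients is in line with the paper's own (very short) proof of Corollary~\ref{cor:determinationnumericalfixedpointdata}: the paragraph preceding the corollary notes $\lvert X^f\rvert=\sum_{\xi}d_{\xi}$, obtains the genus of $\X$ from Hurwitz's formula, and asserts that all ingredients of \eqref{eq:Main} are functions of this discrete data. Your deformation-invariance argument for the pairings makes explicit what the paper leaves implicit (the paper instead points, in Remark~\ref{rem:intersectionpairings}, to closed formulas for exactly these pairings, which are manifestly functions of the discrete data), and your computation of the $f$-weight of the first factor of \eqref{def:det} agrees with what follows from the normalization in Proposition~\ref{pro:uniquelift}.

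The genuine gap is in your third part. The step you defer --- showing that the scalar $\zeta^{kl_0(g-1)}$ is controlled by $D_{\Lambda}$, ``equivalently that the value of \eqref{eq:Main} is independent of which fixed point is taken as the marked point $x_0$'' --- cannot be carried out, because that independence statement is false. Since $\det(\Eb)$ is $\f$-equivariantly isomorphic to $\pi_X^*(\Lambda)$, replacing $x_0$ by $x_0'$ leaves $\LD$ unchanged as a line bundle but twists its $\f$-equivariant structure by the global character $\zeta^{(g-1)(l_0'-l_0)}$; consequently every trace in \eqref{eq:index}, and hence the whole series, is multiplied by $\zeta^{k(g-1)(l_0'-l_0)}$, which is a nontrivial change whenever $\alpha_{x_0}\neq\alpha_{x_0'}$ and $g\not\equiv 1 \pmod{p}$. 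For the same reason your first fallback fails for general equivariant $\Lambda$: $d_{\Lambda}$ records only the unordered multiset of pairs $(\zeta^{-n_x},\alpha_x)$ and does not see which fixed point is marked, so $l_0$ is not a function of $D_{\Lambda}$ (take $\Lambda$ with two fixed points carrying distinct $\alpha$'s and mark either one). Your second fallback also fails: the normalization of Proposition~\ref{pro:uniquelift} does not absorb the character --- it is precisely that normalization which produces the weight $\zeta^{l_0(g-1)}$ on the first factor. The paper never attempts this step: its proof keeps $l_0$ among the determining data (the sentence immediately before the corollary states that the pairings in \eqref{eq:Main} are determined by ``the fixed point data and $l_0$''), so the corollary is to be read with the marked-point weight $l_0$ included; note also that in the case relevant for Corollary~\ref{Cor:TopInv}, namely $\Lambda=\mathcal{O}_X(-x_0)$, the datum $l_0$ \emph{is} recoverable from $d_{\Lambda}$, because Proposition~\ref{lem:uniquelift} makes $x_0$ the unique fixed point with $\alpha_x\neq 1$, with $l_0=[-n_{x_0}]_p$. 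So the correct repair of your argument is not to prove marked-point independence, but to treat $l_0$ as part of the fixed-point data, exactly as the paper does.
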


As above, denote by $\Sigma$ the underlying two-manifold of $X$. Consider again the mapping torus $M_f$. This is Seifert fibered \cite{orlikSeifertManifolds1972} over the quotient surface $\tilde{\Sigma}=\Sigma/ \f$. We recall the Seifert invariants of $M_f$ in equation \eqref{eq:SeifertInvariants} below. The knot $K$ traced out by the marked point $x_0$ is one of the exceptional fibers. By the Seifert invariants of $(M_f,K)$ we shall mean the Seifert invariants of $M_f$ together with the information of which of the Seifert invariants correspond to $K$. Assume that $\Lambda=\mathcal{O}_X(-x_0)$ is the $\f$-equivariant line bundle associated with the divisor $-x_0$. As mentioned above, this is the natural choice of fixed  determinant arising from gauge theory via non-abelian Hodge theory \cite{corletteFlatBundlesCanonical1988,donaldsonTwistedHarmonicMaps1987,hitchinSelfDualityEquationsRiemann1987a}. In this case, the Seifert invariants are equivalent to $(D_{\Lambda},l_0)$. Therefore, Corollary \ref{cor:determinationnumericalfixedpointdata} trivially implies the following 
\begin{corollary} \label{Cor:TopInv} The series $\chi_{\To}(\M,\LD^k,f)$ is determined by the Seifert invariants of the pair $(M_f,K)$. \end{corollary}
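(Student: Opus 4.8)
The plan is to reduce the statement to Corollary~\ref{cor:determinationnumericalfixedpointdata} by matching the purely topological Seifert data of $(M_f,K)$ with the equivariant fixed point datum $D_\Lambda$. Concretely, it suffices to show that the Seifert invariants of $(M_f,K)$ determine $D_\Lambda$, for then the conclusion follows at once from Corollary~\ref{cor:determinationnumericalfixedpointdata}. First I would recall the Seifert structure on the mapping torus. Since $f$ has order $p$, the translation $\R$-action on $\Sigma\times\R$ descends to a locally free $S^1$-action on $M_f$ with orbit space the quotient orbifold $\tilde\Sigma=\Sigma/\f$; the regular fibers lie over $\tilde\Sigma\setminus\pi(X^f)$, while the exceptional fibers, each with cyclic isotropy of order $p$, lie over the images of the fixed points. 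The unnormalized Seifert pair at the fiber over $x\in X^f$ is $(p,\beta_x)$, where $\beta_x$ is determined modulo $p$ by the local rotation number $n_x$ (defined by $df_x=\zeta^{n_x}\cdot\Id$) and conversely determines it; the base genus is $\tilde g$, and the integer ambiguity in the Euler number is fixed by the mapping-torus structure. Hence the Seifert invariants of $M_f$ are equivalent to the combinatorial package $(\tilde g,\{n_x\}_{x\in X^f})$, and recording which exceptional fiber is $K$ adds precisely the value $n_{x_0}$.

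The second step is the fiber computation in the case $\Lambda=\mathcal{O}_X(-x_0)$. For a fixed point $x\neq x_0$ the natural $\f$-equivariant structure on $\mathcal{O}_X(-x_0)$ acts trivially on the fiber $\Lambda_x$, so $\alpha_x=1$; at $x_0$ the fiber of $\mathcal{O}_X(-x_0)$ is canonically the conormal line $T^*_{x_0}X$, on which $f$ acts by $(df_{x_0})^{-1}=\zeta^{-n_{x_0}}$, so $\alpha_{x_0}=\zeta^{-n_{x_0}}$ and therefore $l_0\equiv -n_{x_0}\pmod p$. Thus in this case the second coordinate of each pair $\xi=(\zeta^{-n_x},\alpha_x)$ is completely determined by the first, so the counts $d_\xi$ record exactly the same information as the multiset $\{n_x\}$ of rotation numbers; moreover $x_0$ is singled out as the unique fixed point with $\alpha_x\neq 1$. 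Combined with Hurwitz's formula, which recovers $g$ from $\tilde g$ and the ramification of $\pi$, this shows that $D_\Lambda=(g,d_\Lambda)$ is equivalent to $(\tilde g,\{n_x\})$ and that the marking of $K$ corresponds to the datum $l_0$.

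Combining the two steps yields a bijection between the Seifert invariants of $(M_f,K)$ and the pair $(D_\Lambda,l_0)$; in particular the Seifert invariants determine $D_\Lambda$, and Corollary~\ref{cor:determinationnumericalfixedpointdata} then gives that $\chi_{\To}(\M,\LD^k,f)$ is determined by them, as claimed. The step I expect to be the main obstacle is making this dictionary honest with a fixed normalization convention for Seifert invariants: one must verify that the Euler number produced by the mapping-torus construction carries no information beyond $(\tilde g,\{n_x\})$---in particular that it hides no independent framing of the knot $K$---and one must pin down the sign-and-inversion convention relating $n_x$ to the Seifert pair $(p,\beta_x)$ so that the correspondence $n_{x_0}\leftrightarrow l_0$ is unambiguous.
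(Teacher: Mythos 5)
Your proposal is correct and takes essentially the same approach as the paper: the paper also reduces to Corollary~\ref{cor:determinationnumericalfixedpointdata} by exhibiting the Seifert invariants of $M_f$ explicitly in terms of the fixed point data, namely $(b,g,(a_1,b_1),\dots,(a_u,b_u))=(-p\sum_{x\in X^f}k_x^{-1},\tilde{g},(k_x,p)_{x\in X^f})$ with $k_x=[-m_x]_p$ the mod-$p$ inverse of $n_x$, so that the Seifert data of $(M_f,K)$ is equivalent to $(\tilde{g},\{n_x \bmod p\})$ together with the marking of $K$, i.e.\ to $(D_\Lambda,l_0)$. Your additional verifications (the fiber computation giving $\alpha_{x_0}=\zeta^{-n_{x_0}}$ and $\alpha_x=1$ for $x\neq x_0$, and the check that the Euler number carries no information beyond $(\tilde{g},\{n_x\})$) are precisely the details the paper leaves implicit in its closing ``it is thus clear.''
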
 
Corollary \ref{Cor:TopInv} is well-motivated by TQFT axioms \cite{atiyahTopologicalQuantumField1988} as explained in detail above. Furthermore, we will show that the automorphism equivariant Hitchin index determines the WRT-TQFT invariant of the mapping torus through the identity
\begin{equation} \label{eq:algquantumtop}
\lim_{t \rightarrow \infty} (-1)^{3g-3}\chi_{\To}(\M,\LD^k,f)(t)=\chi(\mathcal{N},\LD^{k+2},f).
\end{equation}
This equation is explained in detail Section \ref{sec:QT}. Equation \eqref{eq:algquantumtop} provides a new link between algebraic geometry and quantum topology, which extends in a natural way the link provided by the realization of the WRT-TQFT mapping class group representations via quantization of moduli spaces of holomorphic vector bundles on Riemann surfaces.

 Finally, we mention that for the class of plumbed three-manifolds, there is the BPS $q$-series invariant $\hat{Z}(M;q)$ \cite{gukovTwovariableSeriesKnot2019,gukovBPSSpectra3manifold2017}, which is an integer power series invariant (depending on a choice of $\Spin^c$-structure), which is a refinement of $Z_k(M)$ and connected to $\SL(2,\C)$-Chern-Simons via resurgence \cite{AM22,gukovResurgenceComplexChernSimons2016}. At the time of writing, it is not clear if and how this is connected to the automorphism equivariant Hitchin index.

\subsection{Organization}

The organization of this paper is as follows. In Section \ref{sec:FIXEDLOCUS} we construct the $\To \times \langle f \rangle$-equivariant structure on $(\Eb,\mathbb{\Phi})\rightarrow \M\times X$ 
and prove Theorem \ref{thm:M^f}. In Section \ref{sec:Localization} we prove Theorem \ref{ThmMain} and Corollary \ref{cor:determinationnumericalfixedpointdata} and Corollary \ref{thm:Galois}. Corollary \ref{Cor:TopInv}  is proven in Section \ref{sec:QT}.

\paragraph*{Acknowledgements.} The authors warmly thank T. Hausel for very valuable discussions and assistance concerning Higgs bundles and their properties. The authors also thank the anonymous referee for insightful comments and suggestions that
improved the quality of this paper.

This work is supported by the grant from the Simons foundation, Simons Collaboration on New Structures in Low-Dimensional Topology grant no. 994320, the  ERC-SyG project, Recursive and Exact New Quantum Theory (ReNewQuantum) with funding from the European Research Council under the European Union‘s Horizon 2020 research and innovation programme, grant agreement no. 810573 and the Carlsberg Foundation grant no. CF20-0431.


\section{The Fixed Locus}
\label{sec:Lift}

\begin{remark}
	Let $Y$ be a compact Riemann surface with a finite subset $\delta$ of marked points. Our conventions and notation on quasi-parabolic Higgs bundles and parabolic Higgs bundles on $(Y,\delta)$ agree with those in \cite{konnoConstructionModuliSpace1993}, except we only consider full flags and specialize to the rank two case. Notice that the real numbers $\{\alpha_j(y)\}_{ j=1,2, y \in \delta}$ introduced in Section $1$ in \cite{konnoConstructionModuliSpace1993} satisfy that $\alpha_j(y)=w_j(y),j=1,2$ for all marked points $y \in \delta $. 
\end{remark}

\begin{definition}Let $D \subset X^f$ be a subset.  Given a quasi-parabolic Higgs bundle $(E,F,\Phi) \rightarrow (X,D)$ define the quasi-parabolic Higgs bundle
$f^*(E,F,\Phi)=(f^*(E),f^*(F),\tilde{f}^*(\Phi))$, where $(f^*(E),f^*(F))\rightarrow (X,D)$ is the pullback quasi-parabolic vector bundle and 
\begin{equation} \label{eq:fHiggsfield}
\tilde{f}^*(\Phi)=(\Id_{\End_0(f^*(E))} \otimes (d f)^*)(f^*(\Phi)).
\end{equation} 
\end{definition} 
Consider the case $D=\emptyset$. The map $(E,\Phi) \mapsto f^*(E,\Phi)$ preserves stability and descends to an automorphism of $\M$ of order $p$. 
\begin{definition} An $\f$-equivariant structure on a quasi-parabolic Higgs bundle $(E,F,\Phi)\rightarrow (X,D)$ is an $\f$-equivariant structure on $E \rightarrow X$, such that the generator $f_E$ of the action on $E$ covering the action of $f$ on $X$ induces an isomorphism of quasi-parabolic Higgs bundles $(E,F,\Phi)\rightarrow f^*(E,F,\Phi),$ which, by abuse of notation, will be denoted by $f_E:(E,F,\Phi)\rightarrow f^*(E,F,\Phi)$.
\end{definition}

\begin{proposition} \label{pro:EquivariantStructureonHiggsBundlesRelLineBundle} Let $(E,\Phi)$ be a Higgs bundle with isomorphism class $\E \in \M^f$. There is a unique $\f$-equivariant structure with generator $f_E:(E,\Phi) \rightarrow f^*(E,\Phi)$ covering the action of $f$ on $X$, such that $\det(E)$ is $\f$-equivariantly isomorphic to $\Lambda.$ Denote by $D_E$ the subset of $x \in X^f$ such that the eigenvalues of $f_E(x)$ are disctint, say $\sqrt{\alpha_x}\zeta^{i_x}$ and $\sqrt{\alpha_x}\zeta^{-i_x}$ with $i_x \in \{1,...,(p-1)/2\}$. For each $x \in D_E$ let $F_x$ be the $\sqrt{\alpha_x}\zeta^{-i_x}$-eigenspace of $f_E(x)$. Define the set of flags $F=\{F_x\}_{x \in D_E}$. The isomorphism class of the $\f$-equivariant quasi-parabolic Higgs bundle $(E,F,\Phi,f_E)$ on $(X,D_E)$ depends only on $\E$.
	\end{proposition}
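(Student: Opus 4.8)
The plan is to prove the two halves of the proposition in turn — existence and uniqueness of $f_E$, then well-definedness of the isomorphism class — the second being a formal consequence of the first. The one structural input I would use repeatedly is that a stable Higgs bundle is simple, so that $\End(E,\Phi)=\C\cdot\Id$. Since $\E\in\M^f$ there is a Higgs bundle isomorphism $g:(E,\Phi)\to f^*(E,\Phi)$, and its $p$-fold iterate $g^{(p)}:=(f^{p-1})^*g\circ\cdots\circ f^*g\circ g:E\to (f^p)^*E=E$ is a Higgs automorphism, hence $g^{(p)}=c\cdot\Id$ for some $c\in\C^*$. Rescaling $g$ by a scalar $\lambda$ scales $g^{(p)}$ by $\lambda^p$ (the pullback of a constant is the same constant), so any $\lambda$ with $\lambda^p=c^{-1}$ yields an honest $\f$-equivariant structure $f_E=\lambda g$; the residual freedom is precisely $\mu_p$, giving $p$ such structures on $(E,\Phi)$ covering $f$.

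To pin down $f_E$ I would use the determinant. Each structure induces one on the line bundle $\det E$, and replacing $f_E$ by $\eta f_E$ with $\eta\in\mu_p$ multiplies the induced generator $\det f_E$ by $\eta^2$. As $p$ is odd, $\eta\mapsto\eta^2$ is a bijection of $\mu_p$, so the $p$ structures on $E$ induce the $p$ distinct equivariant structures on $\det E\cong\Lambda$. Since an automorphism of a line bundle is an ($f$-invariant) scalar, two equivariant structures on a line bundle are isomorphic precisely when their generators coincide; hence exactly one of these $p$ structures is equivariantly isomorphic to the given $\Lambda$, and this selects the unique $f_E$ with $\det E\cong_{\f}\Lambda$.

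For well-definedness, let $\psi:(E,\Phi)\to(E',\Phi')$ be a Higgs isomorphism between two representatives of $\E$, with associated structures $f_E$ and $f_{E'}$. Transporting $f_E$ along $\psi$ gives $f_E^\psi:=(f^*\psi)\circ f_E\circ\psi^{-1}$, an $\f$-equivariant structure on $(E',\Phi')$ whose induced structure on $\det E'$ is the transport along $\det\psi$ of that on $\det E$, hence again isomorphic to $\Lambda$. By the uniqueness above, $f_E^\psi=f_{E'}$, so $\psi$ intertwines $f_E$ and $f_{E'}$ and is thus an isomorphism of $\f$-equivariant Higgs bundles. Such an isomorphism preserves, at each $x\in X^f$, the eigenvalues of the fibrewise generator; therefore $D_E=D_{E'}$ and $\psi$ carries the $\sqrt{\alpha_x}\zeta^{-i_x}$-eigenspace $F_x$ onto $F'_x$ for every $x\in D_E$. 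Hence $\psi$ is an isomorphism $(E,F,\Phi,f_E)\to(E',F',\Phi',f_{E'})$ of $\f$-equivariant quasi-parabolic Higgs bundles, and the isomorphism class depends only on $\E$.

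The step I expect to be delicate, rather than merely formal, is the determinant normalization: isolating a single equivariant structure hinges on the squaring map being invertible on $\mu_p$, which is exactly where the oddness of $p$ enters. Everything else is bookkeeping resting on the simplicity of stable Higgs bundles.
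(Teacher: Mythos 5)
Your proof is correct and takes essentially the same approach as the paper's: simplicity of the stable Higgs bundle makes the lift unique up to a scalar, normalizing $f_E^p=\Id$ reduces the ambiguity to $\mu_p$, and the oddness of $p$ (squaring being a bijection on $\mu_p$) singles out the unique normalization compatible with the canonical equivariant structure on $\Lambda$. You additionally spell out the final ``depends only on $\E$'' step by transporting the equivariant structure along a Higgs bundle isomorphism and invoking uniqueness, which the paper's proof leaves implicit; this is a welcome formal complement rather than a different method.
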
 
\begin{proof} We begin with the existence part. By assumption there exists an isomorphism of Higgs bundles $f_E:(E,\Phi) \rightarrow (f^*(E),\tilde{f}^*(\Phi))$ such that
\begin{equation} \label{conjugation}
(f^c_E\otimes \Id_K)(\Phi)=\tilde{f}^*(\Phi)=(\Id_{\End_0(f^*(E))} \otimes  (df)^*)(f^*(\Phi)),
\end{equation} 
where $f^c_E :\End_0(E)\rightarrow \End_0(f^*(E)) $ is the isomorphism given by conjugation with $f_E$ and where $\tilde{f}^*(\Phi)$ is defined by equation \eqref{eq:fHiggsfield}. Notice that equation \eqref{conjugation} is invariant under $f_E \mapsto \alpha \cdot f_E$ for any $\alpha \in \C^{\times}$. Recall that since $(E,\Phi)$ is stable we have $f_E^p \in \Aut(E,\Phi)\simeq \C^{\times}$. Thus, we can normalize to ensure $f_E^p=\Id$ and because of conjugation invariance, the equation \eqref{conjugation} remain true. Thus $f^*([E,\Phi])=[(E,\Phi)]$ implies that there exists a lift  $f_E$ of $f$ to $(E,\Phi)$. This is unique up to multiplying by an element of $\mu_p$. Now let $h: \det(E) \rightarrow \Lambda$ be an isomorphims (this exists by assumption). Let $f_{\Lambda}$ be the canonical $\f$-equivariant structure on $\Lambda$. Then $f_1:=h \circ \det(f_E) \circ h^{-1}$ and $f_{\Lambda}$ are two lifts that define $\langle f \rangle$-equivariant stuctures on $\Lambda$. Thus $r=f_{\Lambda}^{-1} \circ f_1 \in \Aut(\Lambda) \simeq \C^{\times}$ must satisfy $r^p=1$. Since $f_E \mapsto \alpha \cdot f_E$ has the effect that $r \mapsto \alpha^2 \cdot r$ and since $\gcd(2,p)=1$ we see that  there is a unique normalization of $f_E$ (still defining an equivariant structure) such that $r=1$, which is equivalent to $h$ being an $\langle f \rangle$-equivariant isomorphism.

 For the uniqueness part, notice that if $h': \det(E) \rightarrow \Lambda$ is any other isomorphism, then we must have $h'=c\cdot h$ for some $c \in \C^{\times}$. But then $f_1'=h' \circ f_E \circ (h')^{-1}=f_1,$ so $r'=f_{\Lambda}^{-1} \circ f_1'$ also satisfies $r'=r$. Thus the normalization of $f_E$ for which $h$ is an equivariant isomorphism is the same normalization for which $h'$ is. 
\end{proof} 

Recall the notation $(\mathbb{E},\mathbb{\Phi})$ for the $\To$-equivariant universal Higgs bundle. The construction of the $\To$-equivariant structure is presented in \cite[\S 3]{hauselGeneratorsCohomologyRing2004} and \cite[\S 6.3.2]{hauselVeryStableHiggs2021}. The universal bundle is well-defined up to tensoring with a $\To$-equivariant line bundle pulled back from $\M$. Set $\mathbbm{U}=\End_0(\mathbb{E})$. We have that 
\begin{align}  T \M& 
 \label{eq:tooo}   =  R^1 \pi_{*} ( \mathbbm{U} \overset{\ad(\mathbb{\Phi})}{\longrightarrow} \mathbbm{U}\cdot \pi^*(K) ).
\end{align}

We consider $\M\times X$ a $\To\times\f$-variety, with the $\f$-action induced by the automorphism $F$ given by $F(\mathcal{E}, x) =(f^*(\mathcal{E}),f^{-1}(x))$ for all $(\mathcal{E},x) \in \M\times X$. Consider the natural projection onto the second factor $\pi_X: \M\times X \rightarrow X$.
\begin{proposition} \label{pro:uniquelift} There is a unique $\To \times \langle f \rangle $-equivariant structure on $(\Eb,\mathbb{\Phi})$ such that $\det(\Eb)$ is $\langle f \rangle$-equivariantly isomorphic to $\pi_X^*(\Lambda)$. Let $f_{\Eb}$ denote the generator of the $\langle f \rangle$-action on $\Eb$ covering the action of $F$ on $\M\times X$. Through the isomorphism \eqref{eq:tooo}, the generator $f_{\Eb}$ induces the standard $\To\times \langle f \rangle$-equivariant structure on $T\M$. 
 \end{proposition}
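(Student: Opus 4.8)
The plan is to carry over the pointwise construction of Proposition \ref{pro:EquivariantStructureonHiggsBundlesRelLineBundle} to families over all of $\M\times X$, the extra work being to globalize the existence of the lift and to perform the scalar normalization globally. First I would observe that pulling back by $F$ preserves the moduli problem: since $F(\mathcal{E},x)=(f^*\mathcal{E},f^{-1}(x))$ and the operation $\tilde f^*$ incorporates the twist $(df)^*$ exactly as in the definition of $f^*$ on Higgs bundles, the restriction of $F^*(\Eb,\mathbb{\Phi})$ to $\{\mathcal{E}\}\times X$ again represents the class $\mathcal{E}$. Hence $F^*(\Eb,\mathbb{\Phi})$ is again a universal Higgs bundle for the tautological family on $\M$, and by the rank-two, fixed-determinant uniqueness of the universal bundle up to a twist from the base,
\[
F^*(\Eb,\mathbb{\Phi})\cong(\Eb,\mathbb{\Phi})\otimes\pi_{\M}^*L
\]
for a unique $L\in\Pic(\M)$.

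Next I would trivialize $L$. Taking determinants and using the normalization $\det(\Eb)\cong\pi_X^*\Lambda$ together with the $\langle f\rangle$-equivariance of $\Lambda$ (so that $F^*\pi_X^*\Lambda\cong\pi_X^*\Lambda$), the displayed isomorphism forces $\pi_\M^*L^{2}$, hence $L^{2}$, to be trivial. Since $\Pic(\M)$ is torsion-free (generated by the determinant class), $L$ is trivial, so there is an isomorphism of Higgs bundles $f_{\Eb}^{(0)}\colon\Eb\to F^*\Eb$, i.e.\ a lift of $F$ to $\Eb$. This records, over every point of $\M$ and not only over $\M^f$, the comparison isomorphism produced pointwise in Proposition \ref{pro:EquivariantStructureonHiggsBundlesRelLineBundle}.

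I would then pin down the lift exactly as in the pointwise proof. Any two lifts differ by a nowhere-vanishing regular function on $\M\times X$; using that $X$ is compact and that the global units of $\M$ are the constants, these form $\C^\times$, so the lift is unique up to an overall scalar $\alpha$. The induced map $\det(f_{\Eb}^{(0)})$ is a lift of $F$ to $\det(\Eb)\cong\pi_X^*\Lambda$, and comparing it with the canonical equivariant structure $\pi_X^*f_\Lambda$ produces a constant $r\in\C^\times$ with $r^{p}=1$. Since the rescaling $f_{\Eb}^{(0)}\mapsto\alpha f_{\Eb}^{(0)}$ sends $r\mapsto\alpha^{2}r$ and $\gcd(2,p)=1$, there is a unique normalization $f_{\Eb}$ with $r=1$, i.e.\ with $\det(\Eb)$ equivariantly isomorphic to $\pi_X^*\Lambda$; the same coprimality argument applied to the scalar $f_{\Eb}^{p}\in\Aut(\Eb,\mathbb{\Phi})\cong\C^\times$ shows $f_{\Eb}^{p}=\Id$, so this is a genuine $\langle f\rangle$-equivariant structure. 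Finally, the $\To$-action permutes the set of $\langle f\rangle$-structures satisfying the determinant normalization; as this set is a singleton, the unique structure is $\To$-fixed, hence commutes with the $\To$-action and defines the asserted $\To\times\langle f\rangle$-equivariant structure. I expect the main obstacle to be precisely the two global points that have no pointwise analogue: the triviality of $L$ (for which one must know $\Pic(\M)$ is torsion-free) and the identification of the global units of $\M\times X$ needed to run the scalar normalization.

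For the last assertion, recall from \eqref{eq:tooo} that $T\M=R^{1}\pi_*(\mathbbm{U}\xrightarrow{\ad(\mathbb{\Phi})}\mathbbm{U}\cdot\pi^*(K))$ with $\mathbbm{U}=\End_0(\Eb)$. Conjugation by $f_{\Eb}$ induces an isomorphism $\mathbbm{U}\to F^*\mathbbm{U}$ that is insensitive to the central twist by $L$ and, because $f_{\Eb}$ is an isomorphism of Higgs bundles, intertwines $\ad(\mathbb{\Phi})$ with its $F$-pullback while respecting the canonical equivariant structure on $\pi^*K$. Hence it descends through the deformation complex to an equivariant structure on $R^1\pi_*$, that is, on $T\M$. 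By naturality of the Kodaira--Spencer isomorphism this resulting map is exactly the map on hypercohomology induced by the pullback $f^*$ of deformation complexes, which is the differential of the automorphism $F\colon\M\to\M$; this is by definition the standard $\langle f\rangle$-equivariant structure on $T\M$, and its commutation with the standard $\To$-structure of \cite{hauselGeneratorsCohomologyRing2004} yields the claim.
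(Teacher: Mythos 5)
Your overall strategy coincides with the paper's: observe that $(F^*(\Eb),\tilde F^*(\mathbb{\Phi}))$ is again a universal Higgs bundle, invoke uniqueness of the universal bundle up to a twist by $\pi_{\M}^*L$, trivialize the twist, normalize the resulting lift by a scalar using $\gcd(2,p)=1$ and the determinant condition, and treat the $T\M$ statement through the deformation complex \eqref{eq:tooo} (which the paper dismisses as a routine verification). Two of your elaborations are genuine additions: the explicit trivialization of $L$ (determinants force $L^2$ trivial, and $\Pic(\M)$ torsion-free kills $L$) fills in what the paper covers only with a terse appeal to the triviality of the action on $\Pic(\M)$, and deriving $\To$-compatibility from uniqueness of the normalized structure is a clean alternative to the paper's choice of building $\To$-equivariance into the isomorphism from the start.

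However, the normalization step as you wrote it does not hold together. For an arbitrary lift $f_{\Eb}^{(0)}$ the comparison scalar $r$ does \emph{not} satisfy $r^p=1$: if $(f_{\Eb}^{(0)})^p=\lambda\cdot\Id$, then taking determinants gives $r^p=\lambda^2$, which can be any element of $\C^\times$. The identity $r^p=1$ only becomes available after one has first rescaled so that the $p$-th power of the lift is the identity. Likewise, ``there is a unique normalization with $r=1$'' is false when $\alpha$ ranges over all of $\C^\times$: the equation $\alpha^2 r=1$ has two solutions, and coprimality of $2$ and $p$ plays no role at that stage. The repair is precisely the ordering used in the paper, and in Proposition \ref{pro:EquivariantStructureonHiggsBundlesRelLineBundle} which you cite as your model: first use that $(f_{\Eb}^{(0)})^p$ is a scalar to rescale so that $f_{\Eb}^p=\Id$, which pins the lift down up to $\mu_p$; only then compare determinants, so that $r\in\mu_p$ and $\alpha\mapsto\alpha^2 r$ is a bijection of $\mu_p$, yielding the unique normalization. (Alternatively, solve the two conditions $\alpha^p\lambda=1$ and $\alpha^2 r=1$ simultaneously, which have a unique common solution by $\gcd(2,p)=1$ together with the relation $r^p=\lambda^2$ — but this must actually be said.) With this reordering your argument is complete and agrees with the paper's.
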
 
 
 \begin{proof} 
 	
 Note that via a formula similar to \eqref{eq:fHiggsfield}, we can equip $F^*(\Eb)$ with a pullback universal Higgs field, which we denote by $\tilde{F}^*(\mathbb{\Phi})$.	It follows from a standard argument that $(F^*(\Eb), \tilde{F}^*(\mathbb{\Phi}))$ is a universal Higgs bundle. Since the universal Higgs bundle is unique up to tensoring with the pullback of a line bundle on $\M$, and since $\Aut(f)$ act trivially on $\Pic(\M)$, we see that there exists a $\To$-equivariant isomorphism of universal Higgs bundles
 	\begin{equation}
 	F_{\Eb}: (\Eb,\mathbb{\Phi}) \overset{\sim}{\rightarrow} (F^*(\Eb), \tilde{F}^*(\mathbb{\Phi})).
 	\end{equation}

 	We will argue that $F_{\Eb}$ can be normalized to a new isomorphism which induces a $\To\times \langle f \rangle $-equivariant structure. Clearly $F^{p}=\Id$. It follows that $F_{\Eb}^p \in \Aut_{\To}(\Eb,\mathbb{\Phi})$. It is well-known that $\Aut_{\To}(\Eb,\mathbb{\Phi}) $ is one-dimensional. It follows that $F_{\Eb}^P=\lambda \cdot \Id$ for some $\lambda \in \C^{\times}$. Thus we can replace $F_{\Eb}$ with $f_{\Eb}:=\lambda^{1/p}F_{\Eb}$ to obtain a lift that induces a $\To \times \langle f \rangle $-equivariant structure. The lift can be fixed by demanding that $\det(\Eb)$ is $\langle f \rangle$-equivariantly isomorphic to $\pi_X^*(\Lambda)$.

The fact that $f_{\Eb}$ induces the differential of the standard $\f$-action on $\M$ is a routine verification, which can be done by writing out the definition of the isomorphism \eqref{eq:tooo}.  \end{proof}

\begin{remark} \label{rem:eigenvalues}
     The $f$-action $f_{\Eb}$ on $\Eb$ from Proposition 2.2 covers $(f^*,f^{-1}):\M\times X \rightarrow \M \times X$. Therefore, for every $E \in \M^f$
,
the restriction of $f_{\Eb}$ to $\{E \}\times X$ is a lift $E \rightarrow (f^{-1})^*(E)$ that covers  $f^{-1}:X \rightarrow X$. It follows that if the eigenvalues of
the restriction of $f_{\Eb}$ to $\{E \}\times X^f$
are given by $x \mapsto [(\sqrt{\alpha_x}^{-1}\zeta^{i_x},\sqrt{\alpha_x}^{-1}\zeta^{-i_x})]$ for some $i \in \mathcal{I}_0$, then the eigenvalues of the corresponding
restriction of the lift $E \rightarrow f^*(E)$ guaranteed by Proposition \ref{pro:EquivariantStructureonHiggsBundlesRelLineBundle} are given by $x \mapsto [(\sqrt{\alpha_x}\zeta^{-i_x},\sqrt{\alpha_x}\zeta^{i_x})]$, and for each
$x \in X^f$
and each eigenvalue $\eta$, the $\eta$-eigenspace of $f_{\Eb}$ restricted to $E_x$ is the $\eta^{-1}$-eigenspace of $f_{E}$ restricted to $E_x$.
\end{remark}

\subsection{The Target Moduli Space of Parabolic Higgs Bundles} \label{sec:FIXEDLOCUS}

Let $[\cdot]_p:\Z \rightarrow \{0,...,p-1\}$ denote the remainder modulo $p$.  For each $x \in X^f$, define $m_x \in \{1,...,p-1\}$ by the condition that $m_xn_x=-1 \mod p$, where $n_x$ was defined in the introduction by the condition that $df_x =\zeta^{n_x} \cdot \Id$.

 
For each $x \in X^f$, let $b_x \in \{0,...,p-1\}$ be the unique solution to $\zeta^{2b_xn_x}=\alpha_x^{-1}$. Define the $\f$-invariant divisor $B=\sum_{x \in X^f} b_x$, and let $\mathcal{O}_B$ be the $\f$-equivariant line bundle associated with $B$, with the standard $\f$-equivariant structure as presented in Proposition \ref{lem:uniquelift}. Then the eigenvalues of the $f$-action on $\Lambda(2B)$  over $X^f$ are all equal to unity. It follows that there exists a unique holomorphic line bundle of odd degree $\tilde{\Lambda} \in \Pic(\X) $ such that $\Lambda(2B)$ is $\f$-equivariantly isomorphic to  $\pi^*(\Lambda)$ with the canonical $\f$-equivariant structure on the pullback line bundle. Namely, we can define $\tilde{\Lambda}$ as the quotient $\Lambda(2B) / \f$.
 \begin{definition}  \label{def:beta}   Let $i \in \mathcal{I}_0$.
 	 \begin{itemize} 
 	 	 \item  \label{def:ParabolicStructure} Let $D_{i}$ be the divisor associated with the set $\{x \in \D_{i}: [m_x i_x]_p > (p-1)/2\}.$ 
Define the weights $w_{i}: \D_{i} \rightarrow \Q^2$ by $w_{i,1}(x)=0$ and $w_{i,2}(x)=[2m_xi_x]_p/p$. 
\item \label{def:tM} Denote by $\tM_{i}$ (resp. $\tilde{\n}_{i}$)
the moduli space \cite{konnoConstructionModuliSpace1993} of rank two stable parabolic Higgs bundles (resp. vector bundles)  on $(\X,\D_{i},w_{i})$ with fixed determinant   $\tilde{\Lambda}_{i}=\tilde{\Lambda}(D_{i}).$  
Define $ \tM= \bigsqcup_{i \in \mathcal{I}} \tilde{\M}_{i}.$ 
\end{itemize} 
\end{definition}
 
For each $i \in \mathcal{I}$, the moduli space $\tM_{i}$ is constructed as a hyper-Kähler quotient in \cite{konnoConstructionModuliSpace1993} and proven to be a quasi-projective $\To$-variety in \cite{yokogawaCompactificationModuliParabolic1993,yokogawaInfinitesimalDeformationParabolic1995} by means of GIT \cite{mumfordGeometricInvariantTheory1994}.

\subsection{Hecke Transformations and Descent} \label{sec:Hecke}

	 We recall the following fact about $\f$-equivariant line bundles associated with $\f$-invariant divisors.
	\begin{proposition} \label{lem:uniquelift} Let $ (d_x)_{x \in X^f} $ be a tuple of integers, and let $D$ be the associated $f$-invariant Weil divisor given by $D=\sum_{x \in X^f} d_x\cdot x$. The associated holomorphic line bundle $\mathcal{O}_D\rightarrow X$ admits a canonical $\f$-equivariant structure. Let $f_D$ be the generator of the $\f$-action on $\mathcal{O}_D$ covering the action of $f$ on $X$. For all $x \in X^f$ the generator $f_D$ satisfies the following equation $f_{D}(x)=\zeta^{d_x n_x}\cdot \Id$.
	\end{proposition}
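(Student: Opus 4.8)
The plan is to build the equivariant structure functorially from the canonical meromorphic section of $\mathcal{O}_D$, and then to read off the scalar $f_D(x)$ by a local computation in a coordinate adapted to $f$ at $x$.

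First I would record that $D$ is genuinely $f$-invariant: it is supported on $X^f$ and $f$ fixes each point of $X^f$ pointwise, so $f^* D = \sum_x d_x f^{-1}(x) = D$. This invariance is what makes the construction possible. Realizing $\mathcal{O}_D = \mathcal{O}(D)$ as the sheaf of meromorphic functions $g$ with $\di(g) + D \geq 0$, I would define $f_D$ as the lift covering $f$ whose induced action on sections is pullback by $f^{-1}$, i.e. $g \mapsto g \circ f^{-1}$; equivalently, $f_D$ is the composite of the canonical isomorphisms $f^*\mathcal{O}(D) \cong \mathcal{O}(f^*D) = \mathcal{O}(D)$. The invariance $f^* D = D$ ensures this preserves the sheaf, and $f^p = \Id$ gives $f_D^p = \Id$, so it is a genuine $\mu_p$-equivariant structure. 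It is canonical because it involves no choices: it is the unique lift fixing the tautological section $1_D$, the one with $\di(1_D) = D$.

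Next I would compute $f_D(x)$ for $x \in X^f$. Choose a local holomorphic coordinate $z$ centred at $x$; from $df_x = \zeta^{n_x}\Id$ one gets $f^* z = \zeta^{n_x} z + O(z^2)$, and inverting the local biholomorphism, $z \circ f^{-1} = \zeta^{-n_x} z + O(z^2)$. The meromorphic function $e = z^{-d_x}$ is a local frame of $\mathcal{O}_D$ near $x$ (its order at $x$ equals $-d_x$, for every sign of $d_x$). Applying $f_D$ gives $f_D \cdot e = e \circ f^{-1} = (z \circ f^{-1})^{-d_x} = \zeta^{d_x n_x} z^{-d_x}(1 + O(z)) = \zeta^{d_x n_x}(1 + O(z))\, e$. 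Since $f^{-1}(x) = x$, evaluating the resulting section at $x$ shows $f_D(x)(e(x)) = \zeta^{d_x n_x} e(x)$, i.e. $f_D(x) = \zeta^{d_x n_x}\Id$, as claimed.

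As a conceptual cross-check, because $D$ is supported on fixed points the restriction is canonically $\mathcal{O}_D|_x \cong (T_x X)^{\otimes d_x}$ (coming from $\mathcal{O}(-x)|_x \cong T_x^*X$), on which a lift covering $f$ acts by the pushforward $(df_x)^{\otimes d_x} = \zeta^{d_x n_x}$; this both confirms the scalar and verifies that the structure constructed above is the canonical, functorial one. The computation is otherwise routine, and I expect the only real pitfall to be the direction conventions — that $f_D$ covers $f$ rather than $f^{-1}$ (so that its action on functions is pullback by $f^{-1}$), together with the convention relating $df_x$ to $f^* z$ — since reversing either would turn $\zeta^{d_x n_x}$ into $\zeta^{-d_x n_x}$.
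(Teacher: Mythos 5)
The paper does not actually prove this proposition: it is introduced with ``We recall the following fact'' and stated without argument (the surrounding material is adapted from \cite{andersenAutomorphismFixedPoints2002}), so there is no internal proof to compare yours against. On its own merits your proof is correct and complete: the divisor is supported on $X^f$, hence $f$-invariant; the action $g \mapsto g \circ f^{-1}$ on meromorphic sections of $\mathcal{O}_D$ is a lift covering $f$ satisfying $f_D^p = \Id$, canonically normalized by fixing the tautological section; and the local computation with the frame $e = z^{-d_x}$ and $z\circ f^{-1} = \zeta^{-n_x}z + O(z^2)$ gives $f_D\cdot e = \zeta^{d_x n_x}(1+O(z))\,e$, hence $f_D(x) = \zeta^{d_x n_x}\cdot\Id$. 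Your attention to the direction convention (lift covering $f$, so the section action is pullback by $f^{-1}$) is exactly the right pitfall to flag, and your sign comes out consistent with the paper's uses of the proposition: with your formula the action on $\Lambda(2B)$ over $X^f$ is $\alpha_x\zeta^{2b_xn_x} = \alpha_x\alpha_x^{-1} = 1$, and the action on $\mathcal{O}_{-(j+2A)}$ over $X^f$ is $\zeta^{-(j_x+2a_x)n_x}=1$, which are precisely the facts the paper extracts from this proposition in Section \ref{sec:FIXEDLOCUS} and in the proof of Lemma \ref{lem:S^m(C)^f}.
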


The following is an adaption of Lemma $2.6$ in \cite{andersenAutomorphismFixedPoints2002}, and defines Hecke transformations for $\langle f \rangle$-equivariant quasi-parabolic Higgs bundles.

	\begin{proposition} \label{def:HeckeTransformations} Let $L $ be an $\langle f\rangle $-equivariant line bundle on $X$. Let $(E,F,\Phi)$ be an $\langle f \rangle$-equivariant quasi-parabolic Higgs bundle on $X$ such that $\det(E)$ is $\langle f \rangle $-equivariantly isomorphic to $L$. Fix $x \in X^f$. Assume that $\Phi$ is holomorphic in a neighbourhood of $x$. Assume that the eigenvalues of $f_E(x)$ are $1$ and $\zeta^{b_x}$, where $b_x \in \{1,...,p-1\}$, and assume $F_x$ is the the $\zeta^{b_x}$-eigenspace. Consider the $\f$-equivariant line bundle  $L'=L(-x)$. 
		The following holds. There exists an $\langle f \rangle$-equivariant quasi-parabolic Higgs bundle $(E',F',\Phi')$ with determinant $\langle f \rangle$-equivariantly isomorphic to $L'$, together with a morphism of $\langle f \rangle$-equivariant quasi-parabolic Higgs bundles
		$\iota: (E',F',\Phi') \rightarrow (E,F,\Phi)$ with the following properties. \begin{enumerate} \item The morphism $\iota$  restricts to an isomorphism of $\langle f \rangle$-equivariant quasi-parabolic Higgs bundles over the complement of $x$. \item The image of $\iota(x)$ is equal to the $1$-eigenspace of $f_E(x)$. The kernel of $\iota(x)$ is equal to the $\zeta^{b_x-n_x} $-eigenspace of $f_{E'}(x)$, which is equal to $F'_x.$ The other eigenvalue of $f_{E'}(x)$ is equal to unity.
		\end{enumerate} 
		The following implication holds
	\begin{align} \begin{split} \label{implication}
	&\res(\Phi')_x \not=0 \implies b_x=n_x.
	\end{split} 
	\end{align}
		This pair $(E',\iota)$ can be described via the following short exact sequence of $\langle f \rangle$-equivariant coherent sheaves, where $T_x$ is the sky-scraper sheaf supported at $x$ and  associated with the vector space  $E_x/\ker(f_{E}(x)-\Id_{E}(x))$ 
		\begin{equation} \label{eq:LES} 0 \rightarrow  \mathcal{O}_{E'} \overset{\iota}{\rightarrow} \mathcal{O}_{E} \overset{\lambda}{\rightarrow} 
		T_x  \rightarrow 0.\end{equation} 
		
	\end{proposition}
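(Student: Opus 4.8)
The plan is to realise $(E',F',\Phi')$ as the $\f$-equivariant elementary (Hecke) modification of $(E,\Phi)$ determined by the $1$-eigenspace of $f_E(x)$, and then to read off every assertion from a local normal form. First I would define $\mathcal{O}_{E'}$ as the kernel of the composite $\lambda\colon \mathcal{O}_E \twoheadrightarrow E_x \twoheadrightarrow E_x/\ker(f_E(x)-\Id)$, whose target is precisely the length-one sky-scraper sheaf $T_x$ of \eqref{eq:LES}; this produces $\iota$ and the short exact sequence at once, and $\iota$ is tautologically an isomorphism away from $x$, giving property $(1)$. Since $\ker(f_E(x)-\Id)$ is an eigenspace it is $f_E(x)$-invariant, so $\lambda$ is a morphism of $\f$-equivariant sheaves and $\mathcal{O}_{E'}$ inherits a canonical $\f$-equivariant structure $f_{E'}$ for which $\iota$ is equivariant. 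Taking determinants in the equivariant sequence \eqref{eq:LES} gives $\det(E')\cong \det(E)\otimes\mathcal{O}_X(-x)$ equivariantly, and combining the hypothesis $\det(E)\cong_{\f}L$ with the canonical structure on $\mathcal{O}_X(-x)$ from Proposition \ref{lem:uniquelift} yields $\det(E')\cong_{\f}L(-x)=L'$; as a consistency check the two sides carry the same $f$-weight $\zeta^{b_x-n_x}$ at $x$, again by Proposition \ref{lem:uniquelift}.

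Next I would fix an equivariant local model. By linearisation of the finite cyclic action near the fixed point, choose a coordinate $z$ with $f^{*}z=\zeta^{n_x}z$ and an eigenframe $e_1,e_2$ of $E$ with $f_E$-eigenvalues $1$ and $\zeta^{b_x}$, so that $F_x=\langle e_2\rangle$. Then $\mathcal{O}_{E'}$ is locally free on $e_1$ and $e_2'=z e_2$, and $\iota$ sends $e_1\mapsto e_1$, $e_2'\mapsto z e_2$. Evaluating at $x$ shows $\im\iota(x)=\langle e_1\rangle$ is the $1$-eigenspace and $\ker\iota(x)=\langle e_2'\rangle$; the extra factor $z$ multiplies the $f$-weight by $\zeta^{-n_x}$, so $f_{E'}(x)$ acts by $1$ on $e_1$ and by $\zeta^{b_x-n_x}$ on $e_2'$. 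This is exactly property $(2)$, with $F'_x:=\ker\iota(x)$ the $\zeta^{b_x-n_x}$-eigenspace.

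It remains to produce $\Phi'$ and establish \eqref{implication}. I would set $\Phi'=\iota^{-1}\circ\Phi\circ\iota$, a priori meromorphic at $x$, and control its pole through the equivariance \eqref{conjugation} of $\Phi$. Writing $\Phi=M(z)\,dz$ with $M=\begin{psmallmatrix} a & b\\ c & -a\end{psmallmatrix}$ trace-free in the frame $e_1,e_2$, the invariance $\rho(f)\Phi=\Phi$ imposes on each entry a congruence on its vanishing order; tracking the weight $\zeta^{-n_x}$ of $dz$ together with the conjugation weight $\zeta^{b_x}$ of the $(2,1)$-slot gives $c(\zeta^{-n_x}z)\,\zeta^{b_x-n_x}=c(z)$, so every nonzero Taylor coefficient $c_k$ of $c$ satisfies $k\equiv -m_xb_x-1 \pmod p$ (using $m_xn_x\equiv -1$). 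In the frame $e_1,e_2'$ the field becomes $M'=\begin{psmallmatrix} a & zb\\ c/z & -a\end{psmallmatrix}$, whose only possible pole is the entry $c/z$; this is holomorphic exactly when $c(0)=0$, i.e. when $-m_xb_x-1\not\equiv 0$, i.e. when $b_x\neq n_x$. Hence $b_x\neq n_x$ forces $\Phi'$ holomorphic and $\res(\Phi')_x=0$, the contrapositive of \eqref{implication}; and when a pole does occur its residue $\begin{psmallmatrix} 0 & 0\\ c(0) & 0\end{psmallmatrix}$ is strictly lower-triangular, mapping $E'_x$ into $F'_x$ and annihilating $F'_x$, so $(E',F',\Phi')$ is a genuine $\f$-equivariant quasi-parabolic Higgs bundle.

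The main obstacle is this last local analysis: the bookkeeping that converts the global equivariance of $\Phi$ into vanishing-order congruences modulo $p$, and in particular the correct accounting of the three sources of $f$-weight (the conjugation action on $\End_0(E)$, the weight of $dz$, and the weight $\zeta^{-n_x}$ introduced by replacing $e_2$ with $z e_2$), so that the threshold $c(0)=0\iff b_x\neq n_x$ comes out precisely. Everything else is the standard equivariant elementary-modification argument adapted from \cite{andersenAutomorphismFixedPoints2002}.
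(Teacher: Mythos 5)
Your proposal is correct and follows essentially the same route as the paper: define $E'$ by the kernel sequence \eqref{eq:LES} at the $1$-eigenspace, set $\Phi' = \iota^{-1}\circ \Phi \circ \iota$, and read off the pole order, the nilpotency of the residue with respect to $F'_x=\ker\iota(x)$, and the implication \eqref{implication} from the local matrix form combined with the equivariance \eqref{conjugation}. The only difference is that you work in a fully linearized equivariant local model (a coordinate with $f^*z=\zeta^{n_x}z$ and an eigenframe of $E$ near $x$, which is justified by averaging over the $\mu_p$-action), obtaining congruences modulo $p$ on all Taylor coefficients of the lower-left entry, whereas the paper only diagonalizes $f_E$ at the point $x$ and evaluates \eqref{conjugation} there; both arguments yield $b_x=n_x$ whenever $\res(\Phi')_x\neq 0$.
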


\begin{definition} With notation as in Proposition \ref{def:HeckeTransformations}, the $\f$-equivariant quasi-parabolic Higgs bundle $(E',F',\Phi')$ is called the Hecke transform of $(E,F,\Phi)$ at $x$, and we introduce the notation \begin{equation} \label{inclusion} 	\Psi_{x}(E,F,\Phi):=(E',F',\Phi'),
\end{equation} 
\end{definition}

We now give the proof of Proposition \ref{def:HeckeTransformations}.
\begin{proof} The following facts are already covered in detail in \cite{andersenAutomorphismFixedPoints2002}. The sequence \eqref{eq:LES} defines an $\f$-equivariant quasi parabolic bundle $(E',F')$ together with a vector bundle morphism $\iota: E' \rightarrow E$, which is an isomorphism of $\f$-equivariant quasi parabolic bundles over $X \setminus \{x\}$. Further, one has an $\f$-equivariant isomorphism $\det(E') \simeq \det(E)(-x)$.
	
	Thus, it remains to construct the Higgs field $\Phi'$ and to prove the assertions about it. The Higgs field $\Phi'$ is constructed on $X \setminus \{x\}$ by conjugating $\Phi$ by $\iota$. Thus it is clear that $(E',F',\Phi')$ is an $\f$-equivariant quasi-parabolic Higgs bundle above $X \setminus \{x\}$, and it remains only to analyze $\Phi'$ near $x$. This is done in two steps. First, we show that $\Phi'$ extends meromorphically to $X$ with a potential pole of order one at $x$, and with residue being nilpotent with respect to the flag $F'_x \subset E'_x$. It immediately follows that $(E',F',\Phi')$ is an $\f$-equivariant quasi parabolic Higgs bundle, because $\Phi'$ is fixed by the $\f$-action on a dense subspace. In the second step, we verify the implication \eqref{implication}.

\paragraph*{First part.}  Let $z$ be a local coordinate for $X$ centered at $x$. Choose a local frame $s=(s_1,s_2)$ of $E$ near $x$ such that with respect to this frame we have that $f_E(0)=\diag(1, \zeta_{b_x})$. The existence of such a frame is proven by elementary means in \cite[Section 2]{andersenAutomorphismFixedPoints2002}. Define $s'=(s_1',s_2')=(s_1,z\cdot s_2) $. We now consider $\Phi'$ near $x$. Let $(s_1^*,s_2^*)$ be the induced frame  of $E^*$ near $x$, such that $s_j^*(s_i)=\delta_{i,j}$ for $i,j \in \{1,2\}$, where $\delta_{i,j}$ is the Kronecker delta. For $i,j=1,2$ let $\phi_{i,j} \in \mathcal{O}_{X,x}$ be the regular function such that $\phi_{i,j} d z=s_i^* (\Phi ( s_j)) $. With respect to the frames $(s_1,s_2)$ and $(s_1',s_2')$ we see that $\iota$ is given by $\diag(1,z)$.
With this notation, we have the following matrix decomposition of $\Phi'$ with respect to the ordered frame $(s_1',s_2')$
\begin{align} \label{eq:conjugatedHiggsField}
 &\iota^{-1}  \circ \Phi \circ \iota  = \begin{pmatrix}
\phi_{1,1} & \phi_{1,2} \cdot  z
\\ \phi_{2,1} \cdot z^{-1} &  \phi_{2,2}
\end{pmatrix}  d z.
\end{align}

It is clear from \eqref{eq:conjugatedHiggsField} that $\Phi'$ extends meromorphically to $X$ with a potential pole of order one at $x$, the residue of which is nilpotent with respect to $E'_x \supset F'_x \supset 0$, since  $\C \cdot s'_2(x)=F'_x.$

\paragraph*{Second part.} Recall that $s_2'(x)$ generates the one-dimensional subspace given by $\ker(\iota(x))=F'_x$. Thus equation \eqref{conjugation} implies that \begin{equation}
\label{residueequation} \res(\Phi')_x=\phi_{2,1}(x)\cdot (s'_2\otimes  (s'_1)^*)(x) \in \Hom(E'_x,F'_x).
\end{equation}  It follows that if $\res(\Phi')_x\not=0$, then we must have that $\phi_{2,1}(x)\not=0$. As  $(E,\Phi)$ is fixed, we see that equation \eqref{conjugation} holds for $\Phi$, and at $x$ this evaluates to
\begin{equation}
\begin{pmatrix}
& \phi_{1,1}(x) & \zeta^{-b_x}\phi_{1,2}(x)
\\&  \zeta^{b_x}\phi_{2,1}(x) & \phi_{2,2}(x)
\end{pmatrix} d z = \zeta^{n_y}\begin{pmatrix}
& \phi_{1,1}(x) & \phi_{1,2}(x)
\\& \phi_{2,1}(x) & \phi_{2,2}(x)
\end{pmatrix} d z.
\end{equation}
By inspecting the left lower entries of the matrices of the above equation, we see that if  $\phi_{2,1}(x)\not=0$, then $b_x=n_x \mod p$. 
 \end{proof}

\begin{remark}\label{rem:F_E'} For future reference, we briefly analyze  the generator $f_{E'}$ of the $\f$-action on $E'$ near $x$. Keep the notation from the proof of Proposition \ref{def:HeckeTransformations}. Write $f_E=(F_{i,j})_{1\leq i,j \leq 2}$ as a matrix of regular functions $F_{i,j} \in \mathcal{O}_{X,x}$ with respect to the frame $(s_1,s_2)$ and the pullback frame $(f^*(s_1), f^*(s_2))$. Then  we have the following equation for $f_{E'}$ with respect to the frames $(s'_1,s_2')$ and $(f^*(s_1'),f^*(s_2'))$ 
\begin{align}
\begin{split}  \label{F'conj} f_{E'} &=\begin{pmatrix}
F_{1,1} & F_{1,2} \cdot  z
\\ F_{2,1} \cdot z^{-1} \zeta^{-n_x} &  F_{2,2}\cdot \zeta^{-n_x}
\end{pmatrix}.
\end{split} 
\end{align} 
Here we used $f^*(\iota)= \diag(1,z\cdot \zeta^{n_x})$ with respect to the pair of ordered frames given by $(f^*(s_1),f^*(s_2))$ and $(f^*(s'_1),f^*(s'_2)).$ Since $F_{2,1}$ vanishes at $x$, we see from equation \eqref{F'conj}  that $f_{E'}$ extends holomorphically to $X$. 
\end{remark}

The Hecke transformation $\Psi_x$ at $x \in X^f$  admits an inverse. This fact follows from an adaption of Lemma $2.8$ in \cite{andersenAutomorphismFixedPoints2002}. The following lemma will be useful for analysing stability conditions below. 
\begin{lemma} \label{lem:HeckeLinebundles}
	Let $(E,F,\Phi)$ and $x \in X^f$ be as in Proposition \ref{def:HeckeTransformations} and let $(E',F',\Phi')$ denote the Hecke transform. There is a bijection between the holomorphic line subbundles of $E$ and holomorphic line subbundles of $E'$, which by abuse of notation is denoted by $\Psi_x $, and this has the following properties.
	\begin{enumerate}
		\item  It inducess a bijection between $\langle f \rangle$-equivariant holomorphic line subbundles of $E$ and $\langle f \rangle$-equivariant holomorphic line subbundles of $E'$,
		\item It induces a bijection between $\Phi$-invariant  holomorphic line subbundles of $E$ and $\Phi'$-invariant holomorphic line subbundles of $E'$.
	\end{enumerate} 
	For a line subbundle $H \subset E$ we define $\Psi_x(H)$ to be equal to $H(-x) $ if $H_x \not= \im(\iota(x))$, otherwise we define $\Psi_x(H)$ to be equal to $H$. 
\end{lemma}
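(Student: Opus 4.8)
The plan is to exploit the fact that $\iota$ is an isomorphism away from the single point $x$, so that the entire correspondence, together with all the structures involved, is dictated by what happens over the dense open set $U := X \setminus \{x\}$; the only genuinely local input will be a short verification near $x$ that the canonical extension across $x$ is the one recorded in the statement.

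First I would set up the bijection abstractly. By the first item of Proposition \ref{def:HeckeTransformations}, $\iota$ restricts over $U$ to an isomorphism of $\langle f\rangle$-equivariant quasi-parabolic Higgs bundles $(E',F',\Phi')|_U \xrightarrow{\sim} (E,F,\Phi)|_U$. Since $X$ is a smooth curve, every rank-one subsheaf of a vector bundle has a unique saturation, which is a line subbundle, and a line subbundle is determined by its restriction to any dense open subset (being the saturation of the closure of that restriction). Hence I define $\Psi_x(H)$ to be the unique line subbundle of $E'$ whose restriction to $U$ equals $\iota^{-1}(H|_U)$; the assignment sending a line subbundle $H'\subseteq E'$ to the unique line subbundle of $E$ restricting to $\iota(H'|_U)$ is then an explicit two-sided inverse, so $\Psi_x$ is a bijection between the line subbundles of $E$ and of $E'$. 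To match the formula in the statement I would carry out the local computation at $x$ in the frames $(s_1,s_2)$ and $(s_1,z s_2)$ from the proof of Proposition \ref{def:HeckeTransformations}: writing a nowhere-vanishing local generator $h = a s_1 + b s_2$ of $H$, one checks that if $H_x \neq \im(\iota(x))$ (equivalently $b(0)\neq 0$) then $\iota^{-1}(H|_U)$ extends across $x$ to the subbundle generated by $zh$, i.e.\ $H(-x)$, whereas if $H_x = \im(\iota(x))$ (equivalently $a(0)\neq 0$, $b(0)=0$) then $h$ already lies in $E'$ and generates $\Psi_x(H)=H$. This reproduces the stated formula.

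For the two compatibility properties I would argue uniformly by reducing each invariance condition to a vanishing statement over $U$. A line subbundle $H \subseteq E$ is $\langle f\rangle$-equivariant precisely when the composite $H \hookrightarrow E \xrightarrow{f_E} f^*E \twoheadrightarrow f^*(E/H)$ vanishes; since source and target are line bundles, hence torsion free, this morphism vanishes on all of $X$ as soon as it vanishes on the dense open $U$. The same remark applies to $\Phi$-invariance, where the relevant composite is $H \hookrightarrow E \xrightarrow{\Phi} E\otimes K(D) \twoheadrightarrow (E/H)\otimes K(D)$, again a morphism of line bundles; here it is important that although $\Phi'$ may acquire a simple pole at $x$, the target $(E'/\Psi_x(H))\otimes K(D)$ is still torsion free, so the density argument goes through unchanged. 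Consequently both equivariance and $\Phi$-invariance are detected on $U$ alone.

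Finally I would combine these observations. Over $U$ we have $\Psi_x(H)|_U = \iota^{-1}(H|_U)$, and because $\iota|_U$ intertwines $(f_E,\Phi)$ with $(f_{E'},\Phi')$, the subbundle $H|_U$ is $f_E$-invariant (resp.\ $\Phi$-invariant) if and only if $\Psi_x(H)|_U$ is $f_{E'}$-invariant (resp.\ $\Phi'$-invariant). By the previous paragraph these open conditions are equivalent to the corresponding conditions over all of $X$, so $\Psi_x$ and its inverse restrict to mutually inverse bijections on the $\langle f\rangle$-equivariant line subbundles and on the $\Phi$-invariant line subbundles, which are exactly the two asserted statements. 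The main obstacle I anticipate is purely local bookkeeping at $x$: confirming that the unique extension across $x$ really is $H(-x)$ or $H$ as claimed, and that the simple pole of $\Phi'$ does not interfere with the density argument. Both are controlled by the torsion-freeness of the line-bundle quotients, but they are the points that require care.
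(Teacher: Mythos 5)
Your proof is correct. Note, however, that the paper itself contains no proof of this lemma: it is stated as following from an adaptation of Lemma 2.8 of \cite{andersenAutomorphismFixedPoints2002}, which treats the analogous statement for holomorphic vector bundles without Higgs fields, so there is no in-paper argument to compare yours against. Your construction --- defining $\Psi_x(H)$ as the unique saturated extension across $x$ of $\iota^{-1}(H_{\mid X\setminus\{x\}})$, and then computing in the frames $(s_1,s_2)$ and $(s_1',s_2')=(s_1,zs_2)$ with $\iota=\diag(1,z)$ that this extension is generated by $zh$ (hence isomorphic to $H(-x)$) when $b(0)\neq 0$, and by $h$ itself (hence isomorphic to $H$) when $b(0)=0$ --- is exactly consistent with the local setup used in the paper's proof of Proposition \ref{def:HeckeTransformations}, and it reproduces the formula recorded in the statement of the lemma. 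The part of your argument that genuinely goes beyond the cited reference is item 2: detecting $\Phi$- and $\Phi'$-invariance of a line subbundle by the vanishing of a morphism of line bundles over the dense open set $X\setminus\{x\}$, using that a map from a line bundle to a torsion-free sheaf vanishing generically vanishes identically. This correctly neutralizes the only new difficulty relative to the bundle case, namely the possible simple pole of $\Phi'$ at $x$; the one small imprecision is that the target of your composite for $\Phi'$ should be twisted by $K_X(D+x)$ rather than $K_X(D)$ to accommodate that pole, but since the target remains a line bundle this does not affect the density argument. In short, your proposal supplies a complete, self-contained proof where the paper only cites one, and it is the natural argument the authors presumably intend.
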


For an $\f$-equivariant quasi-parabolic Higgs bundle $\E \rightarrow X$ on which $\f$ act as the identity above $X^f$, one can define a quasi-parabolic Higgs bundle $\E/ \f \rightarrow \X$ by taking the quotient. We say $\E / \f$ is obtained by descent. This is discussed in detail in the proof of Theorem \ref{thm:M^f} below.

For all $i \in \mathcal{I}_0$ define $\M_{i} \subset \M^f$ to be the  $\To$-subvariety of $\M^f$ characterized by the fact that the eigenvalues of the $f$-action on $\Eb_{\mid \M_{i}\times X^f}$ are given by $i.$ It will follow from the proof of Theorem \ref{ThmMain} that $\M_{i} $ is non-empty if and only if $\tM_i$ is non-empty, i.e. if and only if $i \in \mathcal{I}$.

\begin{definition} \label{def:varXi} Let $i \in \mathcal{I}_0$. Define $\mathcal{D}_i=\pi^{-1}(\D_i) $ and define the $\f$-invariant divisor  $S_i$ on $X$ by \begin{equation} S_{i}=\sum \limits_{x \in \mathcal{D}_i} [i_x m_x]_p \cdot x+B.
	\end{equation} For all $x \in X^f$, define the positive integer $b_{i}(x):=[2m_xi_x]_p.$ Define the $\To$-equivariant morphism $\varXi_{i}: \M_{i} \rightarrow  \tM_{i}$ by 
	\begin{equation}
	\varXi_{i}(\E)=\left( \prod \limits_{x \in \mathcal{D}_i}  \Psi_{x}^{b_{i}(x)} (\E (S_{i})) \right)/ \f.
	\end{equation} 
\end{definition}

It is easily seen that $\varXi_{i}(\M_{i}\cap \n)=\tilde{\n}_{i}$, in accordance with the identification of the $\f$-fixed locus of the moduli space $\n$ of stable bundles given in \cite{andersenAutomorphismFixedPoints2002}, though we have used a slightly different isomorphism here.We remark that a similar analysis involving the effect of Hecke transformations was considered by Witten in \cite{Witten18}, though not in the $\f$-equivariant setting.

\begin{proof}[Proof of Theorem \ref{thm:M^f}] For all $i\in \mathcal{I}_0$ we define 
	\begin{equation} \label{Psi_i}
\Psi_{i}=\prod_{x \in \mathcal{D}_i}  \Psi_{x}^{b_i(x)}.
	\end{equation} The proof is divided into three parts. In the first part, we analyze the effect of $\Psi_{i}$ without regard to stability. In the second part, we analyze descent without regard to stability. Finally, in the third part, we complete the proof by taking stability conditions into account. 
	
	 \paragraph*{First part.} Let $i \in \mathcal{I}_0$. We introduce the following notations. Denote by $M_{i}$ the set of isomorphism classes of $\langle f \rangle$-equivariant quasi parabolic rank two Higgs bundles $(E,F,\Phi)  \rightarrow (X,\mathcal{D}_i)$ with determinant $\langle f \rangle$-equivariantly isomorphic to $\Lambda,$ which satisfies the following conditions. For all $x \in X^f$, the eigenvalues of $f_E(x)$ are $\sqrt{\alpha_x}\zeta^{i_x}$ and $\sqrt{\alpha_x}\zeta^{-i_x}$, and $F_x \subset E_x$ is equal to the $\sqrt{\alpha_x}\zeta^{-i_x}$-eigenspace.  Denote by $M_{i}' $ the set of isomorphism classes of $\langle f \rangle$-equivariant quasi parabolic rank two Higgs bundles $(E,F,\Phi)\rightarrow (X,\mathcal{D}_i)$ with determinant $\langle f\rangle$-equivariantly isomorphic to $\pi^*(\tilde{\Lambda}_{i})$ and such that $f_E$ acts as the identity on the fibres of $E$ above $X^f$.

 		 We will now show that the map given in equation \eqref{Psi_i} gives  a bijection $\Psi_{i}: M_{i} \rightarrow M_{i}'$.
 		  First of all, we note that the map $\Psi_{i}$ is well-defined since Hecke transformations at different points $x \in \mathcal{D}_i$ commutes.

	Let $(E_0,F_0, \Phi) \in M_{i}$. Define the flags $F_0(S_i)=((F_0)_x\otimes(S_i)_x)_{x \in X^f}$. Then $(E_1,F_1,\Phi):=(E_0 (S_{i}),F_0(S_{i}),\Phi)$ canonically defines a $\langle f \rangle$-equivariant quasi parabolic Higgs bundle as in Proposition \ref{def:HeckeTransformations} with eigenvalues $\{1,\zeta^{-2i_x}\}_{x \in \mathcal{D}_i}$. Applying at each $x\in \mathcal{D}_i$ a total of $b_{i}(x)$ Hecke modifications to $(E_1,F_1,\Phi_1)$, we obtain a new $\langle f \rangle$-equivariant quasi-parabolic Higgs bundle  $(E_2,F_2,\Phi_2)=\prod_{x \in \mathcal{D}_i}  \Psi_{x}^{b_{i}(x)} (E_{1},\Phi_1)$. By construction, the $\langle f \rangle$-action is trivial on  $E_{\mid X^f}$. This fact follows from Proposition \ref{def:HeckeTransformations}.

		We now show $\det(E_2)\simeq \pi^*(\Lambda_{i})$. As each Hecke transform at $x$ tensors the determinant by $(-x)$, we see that the determinant of $E_2$ is given by
	\begin{align} \begin{split} \label{detE21}&\det(E_2)= \det(E_1)  \left(-\sum_{x \in \mathcal{D}_i}b_{i}(x)\cdot x \right) =
	\Lambda\left( 2S_{i} -\sum_{x \in \mathcal{D}_i}b_{i}(x)\cdot x \right)  
	\\&= \Lambda \left( 2B +\sum_{x \in \mathcal{D}_i} (2[m_xi_x]_p-[2m_xi_x]_p)\cdot x\right).
	\end{split} 
	\end{align}
Recall that $	D_{i}=\{x \in \D_{i},  [m_xi_x]_p>(p-1)/2\}$. If we have $[m_xi_x]_p\leq (p-1)/2$ then $x\notin D_{i}$ and we have $2[m_xi_x]_p=[2m_xi_x]_p$. If not, then $\tilde{x}\in D_{i}$ and one can use that $p$ is odd to show that  $[2m_xi_x]_p=2[m_xi_x]_p-p.$ Thus
	\begin{align} \begin{split}  \label{eq:Dbeta}
 &\sum_{x \in \mathcal{D}_i} (2[m_xi_x]_p-[2m_xi_x]_p)\cdot x=\sum_{x\in \mathcal{D}_i: [m_xi_x]_p>(p-1)/2} p\cdot x=\pi^*(D_{i}).\end{split}
	\end{align}
	Combining equation \eqref{detE21} with equation \eqref{eq:Dbeta}, and using $\pi^*(\tilde{\Lambda})=\Lambda (2B)$, we obtain the desired isomorphism $	\det(E_2) \simeq  \pi^*(\tilde{\Lambda}_{i}).$

	Thus it only remains to verify that $\Phi_2$ is holomorphic on $X \setminus \mathcal{D}_i$ with a pole of order at most one at each $x\in \mathcal{D}_i$ and with residue nilpotent with respect to the flag. For each tuple $a=(a_x)_{x \in \mathcal{D}_i}$ with $0\leq a_x <b_{i}(x)$ define
		
		\begin{equation} \E_a=\prod_{x \in \mathcal{D}_i} \Psi_x^{a_x}(E_1,F_1,\Phi_1).
		\end{equation} It follows from the implication \eqref{implication}, that the Higgs field of $\E_a$ is in fact holomorphic on $X$. Thus, it is only after we apply the last product of Hecke modifications that the resulting Higgs field $\Phi_2$ aquires a potential pole divisor, which will be a subset of $\mathcal{D}_i$ by construction, and each pole will be of order at most one, with residue nilpotent with respect to the flag (this last fact was shown above and follows from \eqref{eq:conjugatedHiggsField}). Thus we have shown that $\Psi_{i}$ defines a bijection $M_{i}\rightarrow M_{i}'$. 
		
 \paragraph*{Second part.}  Descent for holomorphic vector bundles is discussed in detail in \cite{andersenAutomorphismFixedPoints2002}. We now extend descent to Higgs bundles. 	Denote by $\tilde{M}_{i}$ the set of isomorphism classes of quasi-parabolic rank two Higgs bundles on $(\X,\D_{i})$ with fixed determinant $\tilde{\Lambda}_{i}$. We will show that pullback with respect to the quotient map $\pi: X \rightarrow \X$ induces a bijection $\pi^*:\tilde{M}_{i} \rightarrow  M_{i}'$ with inverse given by descent $\E \mapsto \E/\f$.  
	
	 Define the $\f$-invariant divisor $R_{i}=\mathcal{D}_i+(1-p)(X^f \setminus \mathcal{D}_i)$. First we observe that there is a canonical $\f$-equivariant isomorphism
	\begin{equation} \label{isodisso}
	K_X(R_{i}) \simeq \pi^*(K_{\X}(\D_{i})).
	\end{equation} Let $\E'=(E',F',\Phi') \in M_{i}'$. Clearly the quotient $(E'/\f,F'/\f) \rightarrow \X$ is a well-defined quasi parabolic vector bundle. We will show that \begin{equation}\label{phi'Rbeta}
	\Phi' \in H^0(X, \End_0(E')\otimes K_X(R_{i}))^{\langle f \rangle},
	\end{equation} 
	and by composing with the canonical isomorphism \eqref{isodisso}, we can define a holomorphic quotient section  $$\Phi'/\f \in  H^0(\X,\End_0(E'/\f) \otimes  K_{\X}(\D_{i})).$$
	Thus the triple $(E'/\f,F'/\f,\Phi/\f)$ defines a quasi parabolic Higgs bundle on $\X$, which we denote by $ \E' /\f \in \tilde{M}_{i}.$
	
	We now show \eqref{phi'Rbeta}. By the first part of the proof, we see that this is equivalent to proving that for all $\E=(E,F,\Phi) \in M_{i}$, we have that
		\begin{equation} \label{eq:paj2}
		\Phi \in H^0(X,\End_0(E)\otimes K_X((1-p)(X^f \setminus \mathcal{D}_i))).
		\end{equation} Towards the end of proving  \eqref{eq:paj2}, let $x\in X^f \setminus \mathcal{D}_i$. Let $z$ be a coordinate centered at $x$, and let $s=(s_1,s_2)$ be a local frame of $E$ near $x$. For $i,j \in \{1,2\}$, let $\phi_{i,j} \in \mathcal{O}_{X,x}$ be the uniquely determined holomorphic functions defined near $x$, such that with respect to the frame $s$, we have that $\Phi=\phi d z$, where $\phi=(\phi_{i,j})_{1 \leq i,j \leq 2}.$ We must show that
		\begin{equation}
		\min(\ord_x(\phi_{i,j}) \mid i,j\in \{1,2\}) \geq p-1.
		\end{equation}
		As $(E,\Phi)$ is $\f$-invariant, equation \eqref{conjugation} holds (with $f_E$-denoting the canonical $\langle f \rangle$-generator). Recall that $f_E^c$ denotes conjugation by $f_E$. Consider the left hand side of \eqref{conjugation}. Let $f^{i,j}_{l,k} \in \mathcal{O}_{X,x}$ be the holomorphic functions defined near $x$ such that
		\begin{equation} \label{hy}
		f^c_E(\Phi)= (\sum_{l,k} f^{i,j}_{l,k}\cdot \phi_{l,k})_{i,j}  d z.
		\end{equation}
		We now consider the right hand side of \eqref{conjugation}. We have that	\begin{align} \begin{split} \label{yh}
		&	(\Id \otimes (d f)^*)(f^*(\phi_{i,j} d z))=f^*(\phi_{i,j})  (d f)^*(f^*dz) 
		\\&=f^*(\phi_{i,j}) \frac{\partial f}{\partial z} d z=	\zeta^{n_x}f^*(\phi_{i,j})  d z. \end{split}
		\end{align}
		By \eqref{conjugation} we have equality between \eqref{hy} and \eqref{yh}, and this gives
		\begin{equation} \label{oopp}
		\zeta^{n_x}\phi_{i,j}\circ f=\sum_{l,k} f^{i,j}_{l,k}\cdot \phi_{l,k}.
		\end{equation}
		Evaluating at $z=0$ and using that $f_{l,k}^{i,j}(0)=\delta_{l,k}^{i,j},$ (where the right hand side is Kronecker's delta function) we obtain that $\phi_{i,j}(0)=0$ Assume inductively that we have shown that $\phi^{(m)}_{l,k}(0)=0$ for all $(l,k)$ and all $m=0,1,2,...,n-1$ with $n \leq p-2$. Differentiating equation \eqref{oopp} $n$ times and evaluating at $z=0$ and using that $\frac{\partial f }{\partial z}(z)  = \zeta^{n_x}$, we get 
		\begin{align}
		\zeta^{n_x}	\frac{\partial^{n}  \phi_{i,j} \circ f }{\partial z^n} (0)&=\zeta^{n_x(n+1)} \phi^{(n)}_{i,j}(0) =\frac{\partial^{n} }{\partial z^n} \sum_{l,k} f^{i,j}_{l,k}\cdot \phi_{l,k} (0)
		\\&= \sum_{m=0}^{n} \binom{n}{m}  (f^{i,j}_{l,k})^{(n-m)} (0)\phi^{(m)}_{l,k}(0)= \phi^{(n)}_{l,k}(0),
		\end{align}
		where for the last equality, we used our induction hypothesis and that $f_{l,k}^{i,j}(0)=\delta_{l,k}^{i,j}$. In particular $\phi^{(n)}_{l,k}(0)=\zeta^{n_x(1+n)}\phi^{(n)}_{l,k}(0).$ As $n+1\leq p-1$ we have $\zeta^{n_x(n+1)}\neq 1$ and this shows $\phi^{(n)}_{l,k}(0)=0.$

	\paragraph*{Third part.} By Proposition \ref{pro:EquivariantStructureonHiggsBundlesRelLineBundle} we have a natural inclusion $\M_i \subset M_i$.  We now prove that the composition given by  $(\pi^*)^{-1}\circ \Psi_{i}: M_{i} \rightarrow \tilde{M}_{i}$ restricts to an isomorphism of $\To$-varieties $\varXi_{i}: \M_{i} \rightarrow \tM_{i}$.
		Observe that $\varXi_{i}$ is equviariant with respect to the actions of $\To$ on source and target.  Therefore it remains to show that
		\begin{enumerate}
			\item We have $\im(\varXi_{i})=\tM_{i}$.
			\item The set-theoretic maps $\varXi_{i}$ and $\varXi_{i}^{-1}$ are holomorphic. 
		\end{enumerate}
		We start by establishing $\im(\varXi_{i})=\tM_{i}$. Let $\E \in M_{i}.$ Because of the first part of the proof, it is enough to show that the underlying Higgs bundle is stable if and only if $\varXi_{i}(\E)$ is parabolically stable with respect to the parabolic weight $w_{i}: \mathcal{D}_i \rightarrow \Q^2$. Equivalently, we must show that $\E'=\E(S_{i})$ is stable if and only if $\varXi_{i}(\E)$ is parabolically stable with respect to the parabolic weight $w_{i}: \D_{i} \rightarrow \Q^2$. Towards that end, we will first use Lemma \ref{lem:HeckeLinebundles} to show that if $H' \subset \E'$ is any subbundle of rank one which is preserved by the Higgs field and the $\langle f \rangle$-action, then we have (with $H=H'(- S_{i}))$
		\begin{equation} \label{eq:sloperelations}
		\frac{\mu(\E)-\mu(H)}{p}=\frac{\mu(\E')-\mu(H')}{p}= \mu_{\Parabolic}(\varXi_{i}(\E))-\mu_{\Parabolic}(\varXi_{i}(H)),
		\end{equation}
		where we set $$\varXi_{i}(H)=\prod_{x \in \mathcal{D}_i}\Psi_x^{[2m_xi_x]_p}(H(S_{i}))/\f=\prod_{x \in \mathcal{D}_i}\Psi_x^{[2m_xi_x]_p}(H')/\f,$$
		 with notation as in Lemma \ref{lem:HeckeLinebundles}, where $\Psi_x$ was defined for line subbundles. The first equality in equation \eqref{eq:sloperelations} is trivial and the second is  the analog of Lemma $3.2$ in \cite{andersenAutomorphismFixedPoints2002}. We now prove the second equality in \eqref{eq:sloperelations}. As $w_2(x)=[2m_xi_x]_p/p$ forall $x \in \mathcal{D}_i$, we get from the definition of parabolic slope \cite{konnoConstructionModuliSpace1993} that 
		\begin{align}
		& \label{qq1} \mu_{\Parabolic}(\varXi_{i}(\E))= \frac{\deg(\varXi_{i}(\E))+\sum_{x \in \mathcal{D}_i} [2m_xi_x]_p/p}{2},
		\\& \label{qq2} \mu_{\Parabolic}(\varXi_{i}(H))=\deg(\varXi_{i}(H))+\sum_{x \in \mathcal{D}_i: H'_x=F'_x} \frac{[2m_xi_x]_p}{p}.
		\end{align} 
		Since $H'$ is $\f$-equivariant, we see that for all $x\in \mathcal{D}_i$, we have that if $H'_x \not= \im(\iota(x))$ with notation as in Lemma \ref{lem:HeckeLinebundles}, then we must have $H'_x=F'_x.$ Therefore it follows from Lemma \ref{lem:HeckeLinebundles} that 
		$$
		\Psi_x^{[2m_xi_x]_p}(H')=H' \left(\sum \limits_{x\in \mathcal{D}_i: H'_x=F'_x} -[2m_xi_x]_p \cdot x\right),
		$$
		 and therefore
		\begin{align}\begin{split} \label{qqq1}
		&p\deg(\varXi_{i}(H))
	=\deg(H')-\sum_{x \in \mathcal{D}_i: H_x'= F'_x} [2m_xi_x]_p.
		\end{split} 
		\end{align}
		We have
		\begin{equation} \label{qqq2}
		p\deg(\varXi_{i}(\E))=\deg(E')-\sum_{x \in \mathcal{D}_i} [2m_xi_x]_p.
		\end{equation}
		Substituting equation \eqref{qqq1} into equation \eqref{qq1} and  equation \eqref{qqq1} into equation \eqref{qq2} and then computing the difference  $\mu_{\Parabolic}(\varXi_{i}(\E))-\mu_{\Parabolic}(\varXi_{i}(H))$ gives
		\begin{align}
		&\mu_{\Parabolic}(\varXi_{i}(\E))-\mu_{\Parabolic}(\varXi_{i}(H))
	=\frac{\deg(\varXi_{i}(\E))+\sum_{x \in \mathcal{D}_i} [2m_xi_x]_p/p}{2}
		\\&-\left(\deg(\varXi_{i}(H))+\sum_{x \in \mathcal{D}_i: H'_x=F'_x} \frac{[2m_xi_x]_p}{p} \right) 
		\\&=\frac{\deg(E')-\sum_{x \in \mathcal{D}_i} [2m_xi_x]_p+\sum_{x \in \mathcal{D}_i} [2m_xi_x]_p}{2p}
		\\&-\left(\frac{\deg(H')-\sum \limits_{x \in \mathcal{D}_i: H_x'= F'_x} [2m_xi_x]_p}{p}+\sum \limits_{x \in \mathcal{D}_i: H'_x=F'_x} \frac{[2m_xi_x]_p}{p} \right)
		\\&=\frac{\mu(\E)-\mu(H)}{p}.
		\end{align}
		Thus we have proven equation \eqref{eq:sloperelations}. We now use this to prove $\im(\varXi_{i})=\tM_{i}$.  Observe that for any $(E,F,\Phi)\in M_{i}'$, there is an induced bijection between $\Phi$-invariant and $\langle f \rangle$-equivariant  line subbundles $H$ of $E$ and $\Phi/\f$-invariant line subbundles of $E/\f$, and this is given by $H\mapsto H/\f$. This fact and equation \eqref{eq:sloperelations} implies $\im(\varXi_{i}) \subset \tM_{i}.$ Now let  $\E=(E,\Phi) \in M_{i}$ and assume $\varXi_{i}(\E) \in \tM_{i}$. Assume towards a contradiction that $\E$ is not stable. Since $\deg(\Lambda)$ is odd, this is equivalent to $\E$ not being semi-stable. Consider the Higgs Harder-Narasimhan filtration $0 \subsetneq H \subsetneq E$.  Observe that $f_E(f^*(H))$ is a line subbundle which also induce a Higgs Harder-Narasimhan filtration, and therefore we must have $f_E(f^*(H))=H$ by uniqueness. Therefore $H$ is $\f$-equivariant and $\Phi$-invariant. Hence we can apply equation \eqref{eq:sloperelations} to obtain a contradiction. Thus $\E$ is stable. 
		
		We now argue that $\varXi_{i}$ and its inverse are morphisms of varieties. Both the target and the source of $\varXi_{i}$ can be obtained as GIT quotients \cite{simpsonHiggsBundlesLocal1992,yokogawaCompactificationModuliParabolic1993,yokogawaInfinitesimalDeformationParabolic1995}, and therefore it suffices to show that one can apply Hecke transformations in families. This was done for stable bundles in \cite{andersenAutomorphismFixedPoints2002}, and the analysis therein can readily be repeated for Higgs bundles. In particular, we can apply Hecke modifications to $(\Eb, \mathbb{\Phi})$ and we define
		\begin{equation} \label{eq:uiversalparabolic}
		(\tilde{\Eb}_i,\tilde{\mathbb{F}}_i, \mathbb{\Phi}_i)=\Psi_i((\Eb, \mathbb{\Phi})_{\mid \M_i \times X}) / \f.
		\end{equation}
		It is a routine verification that $(\tilde{\Eb}_i,\tilde{\mathbb{F}}_i, \mathbb{\Phi}_i) \rightarrow \tM_i \times \X$ is a $\To$-equivariant universal parabolic Higgs bundle, and that we have a canonical equivariant isomorphism as in equation \eqref{mu_pequi}. 
\end{proof}
To see that \eqref{uni} holds we argue as follows. For each $E \in \M_i$ and $x \in \mathcal{D}_i$
the flag $F_x \subset E_x$ as specified by Proposition \ref{pro:EquivariantStructureonHiggsBundlesRelLineBundle} is given
by the $\sqrt{\alpha_x} \zeta^{-i_x}$-eigenspace of the unique lift $f_E$ guaranteed by Proposition \ref{pro:EquivariantStructureonHiggsBundlesRelLineBundle}. It follows therefore from Remark \ref{rem:eigenvalues} that $f_{\Eb}$
act with weight $\sqrt{\alpha_x}^{-1} \zeta^{i_x}$ on $\tilde{\mathbb{F}}_{i,x}$. As $\Lb_{i,x}=\Hom(\tilde{\mathbb{F}}_{i,x},(\tilde{\Eb}_{i,x}/\tilde{\mathbb{F}}_{i,x})) \cong \tilde{\mathbb{F}}_{i,x}^*\otimes (\tilde{\Eb}_{i,x}/\tilde{\mathbb{F}}_{i,x})$ this implies that \eqref{uni} holds.

\subsection{Identification of The Torus Fixed Locus} \label{sec:M^fcapM^T}

\begin{definition} Let $C$ denote the set of odd integers $c$ with $1\leq c\leq 2g-3$. For each $c \in C$, let $\F_c$ be the set of isomorphism classes of rank two Higgs bundles $(E,\Phi_b)$ on $X$ of the following form.
	\begin{itemize}
		\item There are holomorphic line bundles $L_1$ and $L_2$ giving a direct sum decomposition $E=L_1 \oplus L_2$, and such that $L_1  L_2 \simeq \Lambda$.
		\item The Higgs field $\Phi_b$ is the off-diagonal Higgs field associated with a section $b \in H^0(X,L_2 L_1^{-1} K_X)$. The degree of the zero divisor of $b$ is equal to $c$.
	\end{itemize}
	\end{definition} 

For $c\in \Z_{\geq 0}$ let $X_c$ (resp. $\X_c$) denote the space of positive effectice divisors of degree $c$ on $X$ (resp. $\X$). Now fix $c \in C$. The following facts are shown by Hitchin in  \cite{hitchinSelfDualityEquationsRiemann1987a}. Every Higgs bundle in $\F_c$ is stable, and thus $\F_c$ is a naturally a subspace of $\M$. Each $\F_c$ is connected and we have 
\begin{equation} \M^{\To}= \n \sqcup_{c \in C} \F_c.
\end{equation} 
Moreover, the map $\delta:\F_c \rightarrow X_c$, given by $(E, \Phi_b) \mapsto (b)  \in X_c$, gives $\F_c$ the structure of a principal $\Jac(X)[2]$-bundle (with action given by tensoring). We notice that pullback with respect to $f$ naturally equips $\F_c, \ \Jac(X)[2]$ and $X_c$ with an $\f$-action.

Fix $c \in C$. Recall \cite[proof of Lemma 6.1]{hauselRelationsCohomologyRing2003} that the universal Higgs bundle restricts to a direct sum of $\To$-equivariant line bundles $\mathbb{E}_{c,1}\oplus \mathbb{E}_{c,0} \rightarrow \F_c\times X$, where $\To$ acts with weight $t$ on $\mathbb{E}_{c,1}$ and with weight equal to unity on $\mathbb{E}_{c,0}$. Fix $i \in \mathcal{I}$. We will now identity the components of $\M_i \cap \F_c$ as well as the $\To\times \f$-weight subbundle decomposition of $\Eb$ restricted to these components. 
 
 \begin{definition}
 	 	Set $\mathcal{J}_c=\sqcup_{i \in \mathcal{I}}\mathcal{J}_{i,c}$, where for each $i\in \mathcal{I}$, we define $\mathcal{J}_{i,c}$ to be the set of divisors of the form $j=\sum_{x \in X^f} j_x \cdot x,$ such that $l_j=(c-\deg(j))/p$ is a non-negative integer and for all $x \in X^f$, we have that $j_x \in \{0,...,p-1\}$ and
 	\begin{align} 
 & \label{ix2}  (2i_x)^2 = (n_x(1+j_x))^2 \mod p.
 	\end{align} 
  
 \end{definition}
 Let $i \in \mathcal{I} ,c \in C$ and $j \in \mathcal{J}_{i,c}$. For all $x \in X^f$, it follows from equation \eqref{ix2} that there is a unique $\epsilon_{j,x} \in \{1,-1\}$ such that 
 \begin{equation} \label{eq:epsilon}
 \epsilon_{j,x}2i_x=-n_x(1+j_x)  \mod p.
 \end{equation}

\begin{lemma} \label{lem:S^m(C)^f}   The set of components of the $\f$-fixed locus $X_c^f$ is in bijection with $\mathcal{J}_{c}$,  and the set of components of  $\M_i \cap \F_c$ is in bijection with $\mathcal{J}_{i,c}.$ Fix $j \in \mathcal{J}_{i,c}$ and denote the associated components by $X^f_{c,j}$ and $\M^{\To}_{i,j}$ respectively.

\begin{enumerate}


	\item 
We have an isomorphism $X^f_{c,j} \simeq  \X_{l_j}.$ Further, the morphism $\delta$ restricts to a projection of a principal $\Jac(\X)[2]$-bundle  $\M^{\To}_{i,j} \rightarrow X^f_{c,j} $. Thus 
	\begin{align} \label{defMij}
	\M_{i,j}^{\To}=\{(E,\Phi_b) \in \M^f \cap \F_c : [\ord_x(b)]_p=j_x,  \ \forall x \in X^f\}.
	\end{align}
	\item
	For all $x\in X^f$ we have the equivariant decomposition 
	\begin{equation} \label{eq:directsumdecomp}	\Eb_{\mid \M_{i,j}^{\To}\times \{x\}} = \Eb_{c,1}(x) \otimes \sqrt{\hat{\alpha}_x}^{-1}\hat{\zeta}^{-\epsilon_{j,x}i_x}   \oplus \Eb_{c,0}(x) \otimes \sqrt{\hat{\alpha}_x}^{-1}\hat{\zeta}^{\epsilon_{j,x}i_x}. \end{equation}
	\end{enumerate} 
\end{lemma}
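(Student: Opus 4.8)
The plan is to realise a point of $\F_c\cap\M^f$ as a Higgs bundle fixed by both of the commuting actions of $\To$ and $\f$, and to read off the two discrete invariants $i$ and $j$ from the fibrewise $f$-action. A point of $\F_c$ has the shape $E=L_1\oplus L_2$ with $L_1L_2\simeq\Lambda$ and $\Phi_b$ the off-diagonal field attached to $b\in H^0(X,L_2L_1^{-1}K_X)$ with $\di(b)$ of degree $c$; this splitting is exactly the $\To$-weight decomposition, so the fibres of $\Eb_{c,1},\Eb_{c,0}$ are $L_1,L_2$. Since the $\To\times\f$-equivariant structure on $\Eb$ of Proposition \ref{pro:uniquelift} makes the two actions commute, the canonical lift $f_E$ of Proposition \ref{pro:EquivariantStructureonHiggsBundlesRelLineBundle} preserves the $\To$-weight spaces and is therefore diagonal, $f_E=\diag(f_{L_1},f_{L_2})$; in particular $b$ is an $\f$-equivariant section of $L_2L_1^{-1}K_X$. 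The map $\delta$ is $\f$-equivariant and $\F_c^f=\F_c\cap\M^f$, so $\delta$ restricts to $\F_c\cap\M^f\to X_c^f$.

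I would then describe $X_c^f$ directly. Every $f$-invariant effective divisor of degree $c$ is uniquely $\sum_{x\in X^f}o_x\,x+\pi^*(\bar D)$ with $\bar D$ supported on $\X\setminus\D$; writing $o_x=j_x+pk_x$ with $j_x\in\{0,\dots,p-1\}$ and absorbing the $k_x$ via $\pi^*\bar x=p\,x$ rewrites it as $\sum_x j_x\,x+\pi^*(\bar D'')$ with $\bar D''\in\X_{l_j}$ arbitrary and $l_j=(c-\deg j)/p$. The residues $[\ord_x(\,\cdot\,)]_p$ are locally constant on $X_c^f$ (a collision with $\D$ changes a multiplicity by $p$) and $\X_{l_j}$ is connected, so the components of $X_c^f$ are indexed by the residue data $j=(j_x)$ with $l_j$ a non-negative integer, and $D\mapsto\bar D''$ is the isomorphism $X^f_{c,j}\simeq\X_{l_j}$ of part (1).

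The heart of the proof is a local computation at a fixed point $x$ linking $j_x$ to $i_x$. Choosing an eigenframe for $f_E(x)$, its eigenvalues are $\sqrt{\alpha_x}\zeta^{\pm i_x}$ by Proposition \ref{pro:EquivariantStructureonHiggsBundlesRelLineBundle}; writing $b=\beta(z)$ with $\ord_0\beta=\ord_x(b)$ and using that $f$ scales $z$ by $\zeta^{n_x}$ while $(df)^*$ contributes a further $\zeta^{n_x}$ on $K_X$, the equivariance equation \eqref{conjugation} applied to the off-diagonal entry gives, after comparing leading coefficients, $\lambda_2(x)/\lambda_1(x)=\zeta^{n_x(\ord_x(b)+1)}$, where $\lambda_1,\lambda_2$ are the eigenvalues on $L_1,L_2$. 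As $\lambda_2/\lambda_1=\zeta^{\mp2i_x}$ and $\ord_x(b)\equiv j_x\pmod p$, this forces \eqref{ix2} and pins the sign of \eqref{eq:epsilon}, namely $\epsilon_{j,x}$ is the sign with $\lambda_1(x)=\sqrt{\alpha_x}\zeta^{\epsilon_{j,x}i_x}$. Since the congruence \eqref{ix2} determines $i_x$ uniquely from $j_x$ (as $i_x$ runs through $\{0,\dots,(p-1)/2\}$, the residues $\pm2i_x$ exhaust $\Z/p$), each component of $X_c^f$ lies over a single $\M_i$; conversely each admissible $j$ is realised by a genuine point of $\M^f\cap\F_c$, obtained from an $\f$-equivariant square root $L_1$ of $\Lambda K_X\mathcal{O}(-D)$ (which exists, its degree being even because $c$ is odd) carrying the prescribed fibre eigenvalues, together with the tautological section $b$. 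Hence $\M^{\To}_{i,j}\neq\emptyset$, so $i\in\mathcal I$, and one obtains the two bijections and the description \eqref{defMij}.

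Finally, the $\Jac(X)[2]$-torsor structure of $\delta$ restricts, on the fixed loci, to the $\f$-equivariant $2$-torsion line bundles acting trivially on the fibres over $X^f$ --- triviality is automatic, since $\ell^{\otimes 2}\simeq\mathcal{O}$ forces the fibre characters into $\mu_p\cap\mu_2=\{1\}$ --- and such bundles descend along $\pi$ to $\Jac(\X)[2]$, giving the principal $\Jac(\X)[2]$-bundle $\M^{\To}_{i,j}\to X^f_{c,j}$. For the weight decomposition \eqref{eq:directsumdecomp} I would pass from $f_E$ to $f_{\Eb}$ via Remark \ref{rem:eigenvalues}, which inverts the fibre eigenvalues, so that $f_{\Eb}$ acts on the $L_1$- and $L_2$-fibres by $\sqrt{\alpha_x}^{-1}\zeta^{\mp\epsilon_{j,x}i_x}$; matching $\Eb_{c,1},\Eb_{c,0}$ with $L_1,L_2$ according to their $\To$-weights then yields \eqref{eq:directsumdecomp} on the nose. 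I expect the main obstacle to be this local equivariance computation giving $\lambda_2/\lambda_1=\zeta^{n_x(\ord_x(b)+1)}$ together with the sign and $\To$-weight bookkeeping needed to reproduce the exact characters in \eqref{eq:epsilon} and \eqref{eq:directsumdecomp}; the descent of the $2$-torsion torsor is a second, milder, point.
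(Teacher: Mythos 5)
Your architecture tracks the paper's own proof closely: the decomposition of $X_c^f$ by the residues $[\ord_x(\cdot)]_p$ with the pullback part giving $\X_{l_j}$, the local comparison of the off-diagonal entry in \eqref{conjugation} at $x\in X^f$ (with the extra $\zeta^{n_x}$ from $(df)^*$) yielding \eqref{ix2}, \eqref{eq:epsilon} and, via Remark \ref{rem:eigenvalues}, the weights in \eqref{eq:directsumdecomp}, and the identification of $f$-invariant $2$-torsion with $\Jac(\X)[2]$ by descent are all exactly the paper's steps.

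The genuine gap is in your surjectivity step: the claim that each admissible $j$ is realised by a point of $\M^f\cap\F_c$ "obtained from an $\f$-equivariant square root $L_1$ of $\Lambda K_X(-D)$ (which exists, its degree being even because $c$ is odd)." The parity statement is correct ($\deg\Lambda$ and $c$ are both odd, so the degree is even), but even degree only produces a square root as a holomorphic line bundle; it gives no control over the $f$-action. If $N_0$ is any square root, then $f^*N_0\otimes N_0^{-1}$ is $2$-torsion but need not be trivial, so $N_0$ need not satisfy $f^*N_0\cong N_0$, let alone carry an $\f$-equivariant structure. This existence statement is not a side remark: it is precisely what makes every $j\in\mathcal{J}_{i,c}$ label a \emph{non-empty} component and what makes $\delta:\M_{i,j}^{\To}\rightarrow X_{c,j}^f$ surjective, so both bijection claims and the principal $\Jac(\X)[2]$-bundle claim of the lemma rest on it. This is where the paper invests most of its effort: it constructs an $\f$-equivariant square root of $K_X$ from $K_X=\pi^*(K_{\X})(-(p-1)X^f)$, reduces to $\Lambda(-\delta_0)$, twists by a divisor $A=\sum_x a_x\cdot x$ with $2a_x\equiv -j_x \pmod p$ so that the twisted bundle descends to $\X$, takes an ordinary square root downstairs (where even degree does suffice), and pulls back with the canonical equivariant structure of a pullback. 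Alternatively, you could repair your step with a counting argument: the square roots of $\Lambda K_X(-\delta_0)$ form a $\Jac(X)[2]$-torsor of cardinality $2^{2g}$, the group $\f$ permutes them, and since the odd prime $p$ does not divide $2^{2g}$ some orbit is a singleton; such an $f$-invariant square root then admits an $\f$-equivariant structure by the usual normalisation of a lift of a cyclic group action on a line bundle (as in Proposition \ref{pro:EquivariantStructureonHiggsBundlesRelLineBundle}). Either repair works, but as written the justification for the crucial existence statement is not valid.
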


\begin{proof}
$1.$ The construction of the isomorphism 
\begin{equation} \label{X_c^f} \rho_c: X_{c}^f\rightarrow \bigsqcup_{j \in \mathcal{J}_c} \X_{l_j},
\end{equation}  is straightforward, and we have $X^f_{c,j}=\rho_c^{-1}(\X_{l_j})$ for all $j\in \mathcal{J}_c$. For  $\tilde{D}  \in \X_{l_j}$, the inverse is given by $\tilde{D}  \mapsto \pi^*(\tilde{D})+j.$ The proof that this is indeed an isomorphism is elementary.
		
		 We will now show  that $\delta: \F_c^f \rightarrow X_c^f$ is the projection of a  principal $\Jac(\X)[2]$-bundle. The decomposition given in equation \eqref{X_c^f}  will then entail that $\M_{i,j}^{\To}$, as defined in equation \eqref{defMij}, is in fact a principal $\Jac(\X)[2]$-bundle over $\X_{l_j}$. The proof of the equivariant decomposition given in equation \eqref{eq:directsumdecomp} will be given in the third step of the proof, and this will imply that $\M_{i,j}^{\To}$ is a subspace of $\M_i$. 
		
		First we notice that $\gcd(p,2)=1$ implies that $\pi^*(\Jac(X))[2]=\Jac(X)[2]^f$, and it is elementary to verify that $\pi^*$ defines an isomorphism. Now notice that the image of $\F_c^f$ under $\delta: \F_c \rightarrow X_c$ belong to $X_c^f$. Further, if $\delta(\mathcal{E}_1 ) =\delta(\mathcal{E}_2)$ for $\E_1,\E_2 \in \F_c^f$,  then there exists a unique $L \in \Jac(X)[2]$ with  $L. \mathcal{E}_1 = \mathcal{E}_2$. Then  \begin{align} f^*(L) . \mathcal{E}_1&= f^*(L).  f^*(\mathcal{E}_1)=f^*(L. \mathcal{E}_1)=f^*(\mathcal{E}_2)=\mathcal{E}_2=L. \mathcal{E}_1,
	\end{align} 
	and therefore the fact that $\F_c$ is a $\Jac(X)[2]$-torsor implies that $f^*(L)=L$. Thus $L \in \Jac(X)[2]^f$. Therefore the lemma will follow if we can establish surjectivity of the map $\delta: \F_c^f \rightarrow X_c^f$. Given $\delta_0 \in X_c^f$ we want to find $L_0 \in \Pic(X)^f$, that solves the equation
	\begin{equation} \label{eq:centralequation}
	L_0^2=\Lambda K_X (-\delta_0).
	\end{equation}
Recall that $K_X=\pi^*(K_{\X})(-(p-1)X^f)$. As $K_{\X}$ is of even degree, it has a  square-root, say $K_{\X}^{1/2}$. As $p$ is odd, it follows that we have an $\f$-equivariant square-root of $K_X$ given by 
\begin{equation}
K_{X}^{1/2}=\pi^*(K_{\X}^{1/2})(-2^{-1}(p-1)X^f).
\end{equation} Thus we have reduced the problem to finding an $\f$-equivariant square-root of $\Lambda (-\delta_0)$. Using the first part of the lemma, we see that there exists $j \in \mathcal{J}_{c}$ and  $D \in \X_{l_j}$, such that $\delta_0=\pi^*(D)+j$. As $p$ is an odd prime, we have $2 \in (\Z/p\Z)^{\times}$, and therefore we can for 
	each $x\in X^f$ find $a_x \in \Z$ such that
	$$
	2a_x= -j_x \mod p.
	$$
	Define $A=\sum_{x \in X^f} a_x \cdot x.$ We get from Proposition \ref{lem:uniquelift} that $\mathcal{O}_A$ admits a canonical $\f$-equivariant structure, such that $\f$-acts as the identity on the restriction of $\mathcal{O}_{-(j+2A)}$ to $X^f.$ In particular, we can define the quotient line bundle 
	\begin{equation} L=\mathcal{O}_{-(j+2A)}/\f \in \Pic(\X),
	\end{equation} and we have that $\mathcal{O}_{-(j+2A)}= \pi^*(L)$.  Recall that $\Lambda=\pi^*(\tilde{\Lambda})$. We observe that
	\begin{align}
	&\Lambda (-\delta_0)=\Lambda(-\pi^*(D)-j-2A)\mathcal{O}_{2A}=\pi^*(\tilde{\Lambda}L(-D))\mathcal{O}_{2A}
	\end{align} As $\mathcal{O}_A$ is an $\f$-equivariant square-root of $\mathcal{O}_{2A}$, we just need to find an $\f$-equivariant square-root of $\pi^*(\tilde{\Lambda}L(-D))$. As the degree of $\Lambda (-\delta_0)$ is even, and $\deg(\pi^*(\tilde{\Lambda}L(-D))=p \deg\tilde{\Lambda}L(-D)$ we see that the degree of $\tilde{\Lambda}L(-D)$ must be even. Hence it admits a square-root, say $(\tilde{\Lambda}L(-D))^{1/2}$, and therefore $\pi^*((\tilde{\Lambda}L(-D))^{1/2})$ is an $\f$-equivariant  square-root of  $\pi^*(\tilde{\Lambda}L(-D))$. This finishes the proof of the first part.

$2$. Recall the content of Remark \ref{rem:eigenvalues}. Let $x \in X^f$. As $f_{\Eb}$ commutes with the $\To$-action, and as $\To$ act with weight $t$ on $\Eb_{c,1}$ and with weight equal to unity on $\Eb_{c,0}$, it easily follows that we must have a $\To \times \langle f \rangle$-equivariant decomposition of the form \eqref{eq:directsumdecomp}, for some $\mu'_x \in \Z / p \Z$ in place of $-\epsilon_{j,x}i_x$. Thus we must prove that \begin{equation} \label{fixmuu} 2\mu'_x=-n_x(1+j_x) \mod p.
\end{equation} Let $\E=(L_1\oplus L_2, \Phi_b) \in \M_{i,j}^{\To}$ be given. Consider a frame $(s_1,s_2)$ near $x$, where $s_j$ is a frame of $L_j$ for $j=1,2$. Let $z$ be a coordinate centered at $x$. Write $b=\phi (s_2 \otimes s_1^* )  dz$, where $\phi\in \mathcal{O}_{X,x}$ is a regular function vanishing to order $d$ at $x$. Then $d= j_x \mod p$ by definition. Thus we may write $\phi=z^d \cdot h_1(z)$ with $h_1(0)\not=0.$ It follows that $f^*(\phi)=\zeta^{n_xj_x} z^df^*(h_1)$. As $d f^* (f^*(dz))=\frac{\partial f}{\partial z}dz=\zeta^{n_x}dz$, we obtain

\begin{equation} \label{eq:tilbagetraek}
(\Id \otimes d f^*) (f^*(\Phi_b))= \begin{pmatrix}
0 & 0\\ \zeta^{n_x(j_x+1)} z^df^*(h_1) & 0
\end{pmatrix} d z
\end{equation}
On the other hand, we may write  $f_{\E}$ with respect to the frames $(s_1,s_2)$ and $(f^*(s_1),f^*(s_2))$ as a diagonal matrix $f_{\E} =\diag(f_1,f_2)$, where $f_1,f_2 \in \mathcal{O}_{X,x}$ are regular functions defined near $x$ with $f_1(x)=\sqrt{\alpha}_x\zeta^{\mu'_x}$ and $f_2(x)=\sqrt{\alpha}_x\zeta^{-\mu'_x}$. Recall that $f^c_{\E}$ denotes conjugation by $f_{\E}$. Then we have that
\begin{align} \begin{split} \label{eq:konjugering}
& (f^c_{\E} \otimes \Id_{K})(\Phi_b) =\begin{pmatrix}
0 & 0
\\ f_2f_1^{-1} \phi & 0
\end{pmatrix} dz.
\end{split}
\end{align}
As $\E$ is a fixed point, equation \eqref{conjugation} holds for $\Phi_b$, and thus the right hand side of equation \eqref{eq:tilbagetraek} and the right hand side of equation \eqref{eq:konjugering} are equal, i.e. we have that
\begin{equation} \label{eq:okokok}
\zeta^{n_x(j_x+1)} z^df^*(h_1)  d z=  f_2f_1^{-1} z^d h_1  d z.
\end{equation}
Dividing out by $z^d$ on both sides of equation \eqref{eq:okokok}, evaluating at $z=0$ and using $h_1(0)\not=0$, we see that equation \eqref{fixmuu} holds.  \end{proof}

Fix $i \in \mathcal{I}.$ Towards the end of identifying the components of $\tM_i^{\To}$, we note that $\tilde{\mathcal{N}}_i$ is always one of the components, so it suffices to identify the components of $\tM_i^{\To}\setminus \tilde{\mathcal{N}}_i $. By the above lemma this is equivalent to identifying the image $\varXi_{i}(\M_{i,j}^{\To})$ for all $c \in C$ and $j \in \mathcal{J}_{i,c}.$ For all $c \in C$ and $j\in \mathcal{J}_{i,c}$ define the effective divisor $\D_{ij}$ on $\X$ by
\begin{equation} \label{def:Dij}
    \D_{ij}:=\sum_{x \in \mathcal{D}_i: \epsilon_{j,x}=-1} \pi(x).
\end{equation}
Below, we implicitly use the fact that the sheaf of holomorphic sections of $K_{\X}(\D_{i}-\D_{ij})$ is a subsheaf of the sheaf of holomorphic sections of $ K_{\X}(\D_{i})$. This fact implies that for any holomorphic line bundle $L \rightarrow \X,$ there is a natural inclusion 
\begin{equation} \label{eq:naturalinclusion} H^0(\X, LK_{\X}(\D_{i}-\D_{ij}))\hookrightarrow H^0(\X, LK_{\X}(\D_{i})).
\end{equation}
\begin{definition} \label{def:tF} 
       Set $\mathcal{J}'_i=\sqcup_{c \in C} \mathcal{J}_{i,c}$.  For  $j\in \mathcal{J}'_{i}$ denote by $\tM_{i,j}^{\To}$ the set of isomorphism classes of quasi parabolic Higgs bundles $(E,F,\Phi_b)$ on $(\X,\D_{i})$ of rank two of the following type. 
	\begin{itemize} \item There are $L_+,L_{-} \in \Pic(\X)$ with $E=L_+ \oplus L_{-} $ and $L_+ L_- \simeq \tilde{\Lambda}_{i}$, and for each $\tilde{x} \in \D_{i}$, we have that $F(\tilde{x})=L_{\epsilon_{j,x}}(\tilde{x})$.	\item The Higgs field $\Phi_b$ is the off-diagonal Higgs field associated with a section $b \in H^0(\X, L_+ L_{-}^{-1} K_{\X}(\D_{i}-\D_{ij}))$. The zero divisor of $b$ has degree $l_j$. 
		\end{itemize}
	\end{definition}

    \begin{remark}
       Consider a Higgs bundle of the form $(L_+\oplus L_{-},\Phi_b)$ as in Definition \ref{def:tF}. Let $x \in \mathcal{D}_i$ with $\epsilon_{j,x}=-1$. Then $L_{-}(\tilde{x})$ is the one-dimensional subspace specifying the flag. As $\Phi_b$ is the off-diagonal Higgs field associated with a section $b \in H^0(\X, L_+ L_{-}^{-1} K_{\X}(\D_{i}))$, the residue of the Higgs field $\Phi_b(\tilde{x})$ must necessarily map $L_{-}(\tilde{x})$ to $L_{+}(\tilde{x})$. By definition of parabolic Higgs bundles, the residue of the Higgs field is also nilpotent with respect to the flag. These two facts imply that the residue of the Higgs field must vanish at $\tilde{x}$. This is consistent with the fact that $b$ belong to the image of $ H^0(\X, L_+ L_{-}^{-1} K_{\X}(\D_{i}-\D_{ij}))$ under the natural inclusion \eqref{eq:naturalinclusion} and explains the role of the divisor $\D_{ij}$ defined in \eqref{def:Dij}.
    \end{remark}
\begin{corollary} \label{Cor:B}
	Fix $i \in \mathcal{I}$. Set $\tM^{\To}_{i,i}=\tilde{\mathcal{N}}_i$. Fix $j \in \mathcal{J}'_{i}$. Then $\varXi(\M^{\To}_{i,j})=\tM^{\To}_{i,j}$. Thus, the set $\mathcal{J}_i$ of components of $\tM_{i}^{\To}$ is in bijection with $\{i\}\sqcup\mathcal{J}'_i$. In particular, every $\E \in \tM_{i,j}^{\To}$ is stable with resepct to the weight $w_i$.  
\end{corollary}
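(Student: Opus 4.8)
The plan is to leverage that $\varXi_i\colon \M_i\to\tM_i$ is a $\To$-equivariant isomorphism (Theorem \ref{thm:M^f}), so it restricts to an isomorphism of $\To$-fixed loci $\M_i^{\To}\to\tM_i^{\To}$ carrying connected components bijectively to connected components. Since $\M^{\To}=\n\sqcup_{c\in C}\F_c$, we have $\M_i^{\To}=(\M_i\cap\n)\sqcup\bigsqcup_{c\in C}(\M_i\cap\F_c)$, and Lemma \ref{lem:S^m(C)^f} identifies the components of $\M_i\cap\F_c$ with $\mathcal{J}_{i,c}$. Recording that $\varXi_i(\M_i\cap\n)=\tilde\n_i=:\tM^{\To}_{i,i}$ (observed below Definition \ref{def:varXi}), we see that $\pi_0(\M_i^{\To})$ is in bijection with $\{i\}\sqcup\mathcal{J}'_i$. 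Thus the entire statement reduces to computing the single image $\varXi_i(\M^{\To}_{i,j})$ for a fixed $j\in\mathcal{J}'_i$ and checking it equals the explicit set $\tM^{\To}_{i,j}$ of Definition \ref{def:tF}.

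For the inclusion $\varXi_i(\M^{\To}_{i,j})\subseteq\tM^{\To}_{i,j}$ I would take $\E=(L_1\oplus L_2,\Phi_b)\in\M^{\To}_{i,j}$ and trace it through $\varXi_i(\E)=\Psi_i(\E(S_i))/\f$. By Lemma \ref{lem:HeckeLinebundles} each Hecke step and the twist by $S_i$ preserve the direct-sum structure, so the image is again split, $\varXi_i(\E)=(L_+\oplus L_-,\Phi_{b'})$ with $L_+L_-\simeq\tilde\Lambda_i$ (the determinant computation being the one already carried out in the proof of Theorem \ref{thm:M^f}) and $\Phi_{b'}$ again off-diagonal. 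The two remaining points are the flag and the section. The flag $F_x$ of $\E$ is the prescribed eigenspace of Proposition \ref{pro:EquivariantStructureonHiggsBundlesRelLineBundle}; matching this eigenspace against the explicit $\To\times\f$-weights of $L_1,L_2$ recorded in \eqref{eq:directsumdecomp}, together with the defining congruence \eqref{eq:epsilon} for $\epsilon_{j,x}$, shows that after descent the flag at $\tilde x\in\D_i$ is exactly $L_{\epsilon_{j,x}}(\tilde x)$. Finally, since $\Phi_{b'}$ is off-diagonal while its residue must be nilpotent with respect to this flag, the residue is forced to vanish at each $\tilde x$ with $\epsilon_{j,x}=-1$; this is precisely the statement that $b'$ lands in the subspace $H^0(\X,L_+L_-^{-1}K_{\X}(\D_i-\D_{ij}))$ via \eqref{eq:naturalinclusion}, with $\D_{ij}$ as in \eqref{def:Dij}, and a direct degree count shows the divisor of zeros of $b'$ has degree $l_j$. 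Hence $\varXi_i(\E)\in\tM^{\To}_{i,j}$.

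For the reverse inclusion and stability together, I would run the inverse construction: given an object of the form in Definition \ref{def:tF}, pull it back along $\pi$ and apply the inverse Hecke transformations and the untwist by $-S_i$ (the inverse of $\varXi_i$, whose existence is part of Theorem \ref{thm:M^f}), obtaining a split Higgs bundle $\E=(L_1\oplus L_2,\Phi_{\tilde b})$ on $X$. Divisor bookkeeping, using $K_X=\pi^*(K_{\X})(-(p-1)X^f)$ and $c=\deg(j)+p\,l_j$, shows that $\tilde b$ has divisor of zeros of degree $c$ with $[\ord_x\tilde b]_p=j_x$, so $\E\in\F_c$ with the eigenvalue data of index $i$ and of the component $\M^{\To}_{i,j}$ by Lemma \ref{lem:S^m(C)^f}. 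Crucially, since $\deg\Lambda$ is odd every member of $\F_c$ is automatically stable, so $\E\in\M^{\To}_{i,j}$ and $\varXi_i(\E)$ recovers the given object; thus $\tM^{\To}_{i,j}\subseteq\varXi_i(\M^{\To}_{i,j})$. Combined with the previous paragraph this yields the equality $\varXi_i(\M^{\To}_{i,j})=\tM^{\To}_{i,j}$, and because $\varXi_i$ maps into the stable locus $\tM_i$, every such object is stable with respect to $w_i$. The bijection $\mathcal{J}_i\cong\{i\}\sqcup\mathcal{J}'_i$ then follows from the component count in the first paragraph.

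I expect the \emph{main obstacle} to be the bookkeeping in the inclusion $\varXi_i(\M^{\To}_{i,j})\subseteq\tM^{\To}_{i,j}$: correctly matching the eigenspace defining the flag to the weight decomposition \eqref{eq:directsumdecomp} so as to land on the correct summand $L_{\epsilon_{j,x}}$, and pinning down the twisting divisor $\D_i-\D_{ij}$ from the residue-nilpotency constraint. The reverse inclusion is comparatively soft once one observes that odd determinant degree makes stability on $X$ automatic.
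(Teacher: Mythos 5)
Your proposal is correct and, for the heart of the matter --- computing $\varXi_i(\E)$ for $\E\in\M^{\To}_{i,j}$ --- it follows the same route as the paper: twist by $S_i$, trace the Hecke transformations and descent, check the determinant, match the flag to the summand $L_{\epsilon_{j,x}}$, and identify the twisting divisor $\D_i-\D_{ij}$. Two small differences there: the paper pins down $\D_i-\D_{ij}$ by the explicit zero-divisor identity \eqref{eq:Dij} rather than by your residue-nilpotency argument (which the paper states only as a motivating remark after Definition \ref{def:tF}, and which in any case must be supplemented by the degree count you mention); and preservation of the splitting under Hecke transforms is obtained in the paper from the local frame computations \eqref{eq:conjugatedHiggsField} and \eqref{F'conj}, not from Lemma \ref{lem:HeckeLinebundles}, which by itself only tracks line subbundles (one needs an extra degree/determinant argument to conclude the two transformed subbundles still span). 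Where you genuinely depart from the paper is the reverse inclusion and stability: the paper never runs the inverse construction --- its proof ends once \eqref{eq:Dij} is established and then asserts that stability of every $\E\in\tM^{\To}_{i,j}$ ``follows from Theorem \ref{thm:M^f} \ldots and the identity $\varXi(\M^{\To}_{i,j})=\tM^{\To}_{i,j}$'', leaving the surjectivity half of that identity implicit (it rests on the invertibility of Hecke transforms and of descent established earlier). You instead prove surjectivity directly: pull back, undo the Hecke transforms and the twist, observe that the result lies in $\F_c$ with the correct numerical data, and is therefore automatically stable by Hitchin's description of $\F_c$, hence lies in $\M^{\To}_{i,j}$. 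This ordering --- stability of the preimage first via $\F_c$, surjectivity second, parabolic stability of $\tM^{\To}_{i,j}$ as a corollary --- is logically cleaner than the paper's concluding sentence, which reads as circular unless one supplies the implicit surjectivity argument; so your proposal actually fills in a step the paper glosses over.
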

\begin{proof} 
Recall the notation of Definition \ref{def:varXi}. Let $\E_0 \in \M_{i,j}^{\To}$ and write $\E_1=\E_0(S_i)$. The proof is divided into three parts. In the first, we analyze the effect of a single Hecke transform on $\E_i$. In the second part, we describe $\Psi_i(\E_1)$, and in the third and final part, we describe the Higgs bundle given by descent $\varXi_i(\E_0)=\Psi_i(\E_1)/ \f.$
\paragraph*{First part.} We begin by analyzing the effect of a single Hecke transformation. Write $\E_1=(L_1 \oplus L_2, \Phi_b)$, where $L_1,L_2$ are $\f$-equivariant line bundles and $\Phi_b$ is the off-diagonal Higgs field associated with a section $b \in H^0(X,L_2L_1^{-1}K_X)$. Consider the setup of the second step of the proof of Proposition  \ref{def:HeckeTransformations}. Let $x \in \mathcal{D}_i$. Assume for definiteness that $\epsilon_{j,x}=1$. We can choose the frame $(s_1,s_2)$ of $\E$ such that $s_1$ is a non-vanishing section of $L_1$ and $s_2$ is a non-vanishing section of $L_2$. Let $z$ be a local holomorphic coordinate on $X$ centered at $x$ and define $\phi\in \mathcal{O}_{X,x}$ to be the regular function such that 
\begin{equation} 
s_2^*(b(s_1))(z)=\phi(z)dz.
\end{equation} The Hecke transform is given as follows 
\begin{equation}
\E_1'=(L_1 \oplus L_2(-x), \Phi_{b'}),
\end{equation}
where $\Phi_{b}'$ is the Higgs field obtained by the meromorphic extension of $\iota^{-1}\circ\Phi_b \circ \iota$, where $\iota: \E_1' \rightarrow \E_1$ is the natural morphism described in Proposition \ref{def:HeckeTransformations}. Write $E=L_1 \oplus L_2$ and $E'=L_1 \oplus L_2(-x)$. Notice that in the frame $(s_1,s_2)$, the matrix of $f_{E}$ is diagonal in a neighbourhood of $x$. Therefore equation \eqref{F'conj} implies that the lift $f_{E'}$ of the Hecke transform $\E'$ is diagonal with respect to any frame  of the form $(s_1', s_2')$ where $s_1'$ is a non-vanishing section of $L_1$ and $s_2'$ is a non-vanishing section of $L_2(-x)$. Similarly, equation \eqref{eq:conjugatedHiggsField} implies that $\Phi'_b$ will also be an off-diagonal Higgs field associated with a section, i.e. with similar notation, it will be of the following form
\begin{equation}
    \Phi_b'= \begin{pmatrix}
        0 &0
        \\ z^{-1}\phi(z)& 0
    \end{pmatrix}\otimes dz, \quad (\epsilon_{j,x}=+1).
\end{equation}

The case that $\epsilon_{j,x}=-1$ is similar, except that in this case the modified bundle takes the form $\E=L_1(-x)\oplus L_2$ and the modified Higgs field is of the following form (with similar notation)
\begin{equation}
    \Phi_b'= \begin{pmatrix}
        0 &0
        \\ z\phi(z)& 0
    \end{pmatrix} \otimes dz, \quad (\epsilon_{j,x}=-1).
\end{equation}

We see that in each case $\Phi_{b}'$ can be described as the off-diagonal Higgs field associated with the section $b'=\iota^{-1}\circ b \circ \iota$, and we may write $\Phi_{b}'=\Phi_{b'}$. Moreover, in each case, it is clear that if we let $(F_{ab})_{1 \leq a,b \leq 2} \subset \mathcal{O}_{X,x}$ be the matrix of regular functions that describes the lift $f_{\E_1}$ with respect to the local frame $(s_1,s_2)$, then we see that $F_{12}$ and $F_{21}$ are constantly equal to zero. It follows that $(F_{ab})_{1\leq a,b\leq 2}$ is diagonal, and from remark \ref{rem:F_E'}, we deduce that the matrix of $F_{E'}$ with respect to the frame $(s_1',s_2')$ is also diagonal. It follows that the new frame $(s_1',s_2')$ may be used in a potential next Hecke tranformation, and this will be used below.
\paragraph*{Second part.} We now iterate the above analysis to describe $\Psi_i(\E_1)$. Define holomorphic line bundles defined as follows
\begin{align}
    &\Psi_{i,1}(L_1)
:=L_1\left(-\sum_{x \in X^f: \epsilon_{j,x}=-} b_i(x)\cdot x\right),
\\& \Psi_{i,2}(L_2):=L_2\left(-\sum_{x \in X^f: \epsilon_{j,x}=+} b_i(x)\cdot x\right).
\end{align} We see by induction that 
\begin{equation}
    \Psi_i(\E_1)=\prod_{x \in \D_i} \Psi_x^{b_i(x)}(\E_1)=(\Psi_{i,1}(L_1)\oplus \Psi_{i,2}(L_2), \Phi_{\Psi_i(b)}),
\end{equation}
where the first equality is simply the definition of $\Psi_i$, and where $\Phi_{\Psi_i(b)}$ is the off-diagonal Higgs field associated with the meromorphic section
\begin{equation} \Psi_i(b)\in H^0(X, \Psi_{i,1}(L_1)^{-1}\otimes \Psi_{i,2}(L_2)\otimes K_X),
\end{equation}
which is conjugate to $b$ on $X \setminus \D_i$, and which near each $x \in \D_i$ is of the following form 
\begin{equation} \label{eq:newb} (s''_2)^*(\Psi_i(b)(s_1''))(z)=z^{-\epsilon_{j,x}b_i(x)} \phi(z) dz,
\end{equation}
where $(s''_2,s''_1)$ is a pair of holomorphic frames of the holomorphic lines bundles defining the underlying holomorphic vector bundle of the Higgs bundle and $z$ is a local coordinate centered at $x$.
\paragraph*{Third part.} We now describe the effect of descent. By construction, the holomorphic line bundles $\Psi_{i,a}(L_a), a\in \{1,2\},$ and the Higgs field $\Phi_{\Psi(b)}$ are amenable to descent, and we write \begin{align}
    &L_+:=\Psi_{i,2}(L_2)/ \langle f \rangle,
    \\ & L_{-}:=\Psi_{i,1}(L_1)/ \langle f \rangle,
\end{align}
so that $\varXi_i(\E_0)=(L_+\oplus L_{-},\Phi_{\Psi_i(b)}/\f)$. We now argue that
\begin{equation} \label{eq:endlinebundle}
   \Psi_i(b)/ \f \in  H^0(\X, L_+ L_{-}^{-1} K_{\X}(\D_{i}-\D_{ij})).
\end{equation}
Recall the $\f$-invariant divisor $R_i=\mathcal{D}_i+(1-p)(X^f\setminus \mathcal{D}_i).$ By equation \eqref{phi'Rbeta} we have that
\begin{equation}
\label{eq:newlinebundle} \Psi_i(b) \in  H^0(X, \Psi_{i,1}(L_1)^{-1}\Psi_{i,2}(L_2)K_X(R_i))^{\f},
\end{equation} and by construction, and as argued in the proof of Theorem \ref{thm:M^f}, we have that \begin{equation} 
\Psi_i(b) / \f \in H^0(\X,L_+ L_{-}^{-1} K_{\X}(\D_{i})).
\end{equation}
To show \eqref{eq:endlinebundle} we will show that $\Psi_i(b)$ belong to the image of the natural morphism \eqref{eq:naturalinclusion}. We will now analyze the zero divisor of $\Psi_i(b)$. Recall that by Lemma \ref{lem:S^m(C)^f} there exists an effective divisor $\tilde{d}$ on $\X$ of degree $l_j$ such that the zero divisor of $b$ is of the form 
\begin{equation}
    (b)=\pi^*(\tilde{d})+j.
\end{equation}
It follows from \eqref{eq:newb} that the zero divisor of $\Psi_i(b)$ takes the form
\begin{align}
\begin{split} \label{eq:analysiszerodivisor} (\Psi_i(b))&=(b)+\sum_{x \in X^f} (1+\epsilon_{j,x}b_i(x))\cdot x-(p-1)(X^f/\mathcal{D}_i)
\\&=\pi^*(\tilde{d})+j+\sum_{x \in X^f} (1+\epsilon_{j,x}b_i(x))\cdot x-(p-1)(X^f/\mathcal{D}_i).\end{split}
\end{align}
Thus, to show that $\Psi_i(b)$ descends to a holomorphic section of \eqref{eq:endlinebundle} as desired, we must show that\begin{equation} \label{eq:Dij}
        \pi^*(\D_{ij})=j+\sum_{x \in \mathcal{D}_i}(1-\epsilon_{j,x}b_{i}(x))\cdot x-(p-1)(X^f \setminus \mathcal{D}_i).
    \end{equation}
For each $x \in X^f$ denote by $d_{ij}(x) \in \Z$ the coefficient of $x$ in the divisor defined by the right hand side of \eqref{eq:Dij}.  We must show that for each $x \in X^f$ we have that
   \begin{equation}      d_{ij}(x)= \begin{cases} p, &\text{if $\epsilon_{j,x}=-1$,}\\ 0, & \text{otherwise}.    \end{cases}   \end{equation}
    For $x \in X^f \setminus \mathcal{D}_i$, we have $i_x=0$, and therefore it follows directly from \eqref{ix2} that $j_x=p-1$. Hence $d_{ij}(x)=0$ in this case. Assume now that $x \in \mathcal{D}_i$ with $\epsilon_{j,x}=1$. Then $i_x\not=0 \mod p$. From \eqref{eq:epsilon} and the fact that $n_xm_x=-1$, we obtain $2i_xm_x=1+j_x \mod p$. As $j \in \{0,...,p-1\}$ we observe that $1+j_x \in \{1,...,p\}$. However, as $2i_xm_x =1+j_x \mod p$ and as $2i_xm_x \not=0 \mod p$, we have $1+j_x\in \{1,...,p-1\}$. Therefore $[2i_xm_x]_p=1+j_x$. As $b_i(x)=[2i_xm_x]_p$ by definition, it follows that
    \begin{equation}
        d_{ij}(x)=j_x+(1-\epsilon_{j,x}b_i(x))=j_x+(1-(1+j_x))=0, \quad (\epsilon_{j,x}=1).
    \end{equation}
    Assume now that $\epsilon_{j,x}=-1$. Then \eqref{eq:epsilon} and the fact that $n_xm_x=-1$ implies that $2i_xm_x = -(1+j_x) \mod p$. As $j_x \in \{0,...,p-1\}$, this implies that $b_i(x)=[2i_xm_x]_p=p-(1+j_x)$. Hence
    \begin{equation}
        d_{ij}(x)=j_x+(1-\epsilon_{j,x}b_i(x))=j_x+(1+(p-(1+j_x)))=p, \quad (\epsilon_{j,x}=-1).
    \end{equation}
    This finishes the argument that equation \eqref{eq:Dij} holds.


Finally, the fact that every $\E \in \tM_{i,j}^{\To}$ is stable with respect to  $w_i$ follows from Theorem \ref{thm:M^f}, the fact that $\M^{\To}_{i,j}$ is a sub-variety of $\M_i$ and the identity $\varXi(\M^{\To}_{i,j})=\tM^{\To}_{i,j}$.
\end{proof}

\begin{remark} \label{rem:intersectionpairings} From Corollary \ref{Cor:B}  it follows that every connected component is either a smooth moduli space of stable parabolic vector bundles on  $\X$ of rank two, or a principal-$\Jac(\X)[2]$ bundle over a symmetric power of $\X$. Consider first the case of moduli spaces of stable bundles. The intersection pairing of these moduli spaces have been thoroughly studied following Witten's work \cite{wittenTwoDimensionalGauge1992}. In the so-called coprime case, formulae for all pairings were proven in rank two in \cite{thaddeusConformalFieldTheory1992} and in all ranks in \cite{jeffreyIntersectionTheoryModuli1998}. Subsequently, a version of these formulae for a class of parabolic moduli spaces including $\tilde{\n}_i$ were proven in \cite{meinrenkenWittenFormulasIntersection2005}. Consider the case of the intersection pairings on the cohomology ring of symmetric powers of a Riemann surface. These rings are generated by MacDonald classes \cite{macdonaldSymmetricProductsAlgebraic1962} and all the relations in the cohomology ring was completely described in \cite{macdonaldSymmetricProductsAlgebraic1962}. This was used in \cite{thaddeusConformalFieldTheory1992} to compute certain characteristic numbers. 
	
\end{remark}

 \section{Localization} \label{sec:Localization}

We collect some generalities on localization before proving Theorem \ref{ThmMain}. Let $(M,\omega)$ be a Kähler manifold with a holomorphic $\To\times \mu_p$-action, for which the $\To$-action is meromorphic in the sense of  \cite{wuInstantonComplexHolomorphic2003}, i.e. assume Assumption $2.15$ in \cite[\S 2.2]{wuInstantonComplexHolomorphic2003} is satisfied. This is satisfied if $M$ is a semi-projective variety in the sense of \cite{bialynicki-birulaTheoremsActionsAlgebraic1973}. Let $h: M \rightarrow M$ be a generator of the $\mu_p$-action. Let $L\rightarrow M$ be a holomorphic $\To\times \mu_p$-equivariant line bundle of curvature $-i\omega$. Then $L \rightarrow M$ is a quantum line bundle for $(M,\omega)$ in the sense of geometric quantization \cite{kostantQuantizationUnitaryRepresentations1970,souriauStructureSystemesDynamiques1970}. For all $q \in \Z_{ +}$ and $m \in \Z$ denote by $H^q_m(M,L)$ the $t^m$-weight sub-space of $H^q(M,L)$. Finite dimensionality of these weight sub-spaces follows from the results of \cite{wuInstantonComplexHolomorphic2003}. 
Define
	\begin{equation}
	\chi_{\To}(M,L,h)(t)= \sum_{m \in \Z} \sum_{q \geq 0} (-1)^q \tr( h: H_m^q(M,L)\rightarrow H_m^q(M,L)) \cdot t^m.
	\end{equation}
Proposition \ref{pro:generalloc} below gives a general localization formula for $\chi_{\To}(M,L,h)$ which we will apply to the automorphism equivariant Hitchin index as defined in \eqref{eq:index}. To state Proposition \ref{pro:generalloc}, we first introduce some notation.
    
   The meromorphicity condition on the $\To$-action on $M$ implies that the connected components of $M^{\To}$ are compact smooth complex manifolds. For our purpose, we can and will assume each of these components is in fact a smooth projective variety. Write $A:=\pi_0(M^{\To \times \mu_p})$ and denote by $\{M_a\}_{a \in A} $ the components of $M^{\To \times \mu_p}$. Fix  $a\in A$. Let $T_a M$ denote the restriction of $TM$ to $M_a$. For every $\To\times \mu_p$-equivariant vector bundle $W\rightarrow M_a$ denote by $W^{\pm}$ the $\pm$-$\To$-weight subbundle of $W$, and define $\nu_{a}= \rank(T_a M^{-}).$ Recall the notation $s$ for the equivariant sum of all symmetric powers. Recall that for any algebraic group $H$, we denote by $K_H^0$ the functor that assigns the K-theory ring of $H$-equivariant algebraic vector bundles to a smooth projective $H$-variety. Define the equivariant class
    \begin{equation} \label{defomega}
  \omega(W)=s((W^{+})^*) \cdot s(W^{-}) \cdot \det(W^{-}) \in  K^0_{\To \times \mu_p}(M_a)[[t^{-1}]].
  	\end{equation}  
The equivariant operation $\Omega$ introduced in \eqref{def:Omega} in Section \ref{sec:introlocalization} play a key role in Proposition \ref{pro:generalloc} and we now justify why it is well-defined. Denote by $\lambda=\sum_{l\geq 0} (-1)^l \lambda^l$ the equivariant operation of taking the alternating sum of exterior powers. Let $W^{\To} \subset W$ (resp. $W^{\To\times \mu_p}\subset W)$ denote the sub-bundle fixed by $\To$ (resp. $\To\times \mu_p$). Recall from Section \ref{sec:introlocalization} the multplicatively closed subset $S \subset \Z[\mu_p]$ generated by the elements $1-\hat{\zeta^l}$ where $l$ runs through the set $\{1,...,p-1\}.$ For any smooth projective $H$-variety $Z'$ we observe that $K_{\mu_p}(Z')$ is an algebra over the ring $\Z[\mu_p]$, and we can therefore regard $S$ as a multiplicatively closed subset of $K^0_{\mu_p}(Z')$. To justify why $\Omega(W)$ is well-defined, we must justify the invertibility of the image of the class $\lambda((W^\To / W^{\To \times \mu_p})^*)$ under the natural homomorphism of rings $K^0_{\mu_p}(M_a)\rightarrow S^{-1}  \cdot K^0_{\mu_p}(M_a)\otimes \Q$. This follows from Proposition \ref{pro:Inversion} below.  We now make the simple observation that we may rewrite $\Omega(W)$ in terms of $\omega(W)$ as follows
\begin{equation} \label{eq:Omegaintermsofomega}
    \Omega(W)= \frac{\omega(W)}{\lambda((W^\To / W^{\To \times \mu_p})^*)} \in S^{-1} \cdot K^0_{\To \times \mu_p}(M_a)\otimes_{\Z} \Q[[t^{-1}]].
\end{equation} 
Further, we observe that $\Omega$ is a homomorphism that maps sums to products. 
\begin{proposition} \label{pro:generalloc}  We have that
		\begin{align} \begin{split}  \label{eq:tcomponentdecompositionG}
	&\chi_{\To}(M,L,h) =
	\sum_{a \in A} (-1)^{\nu_{a}}\int_{M_a} 
	\ch  \left(\Omega(T_aM)\cdot L_{\mid M_{a}} \right) \Td M_{a}. \end{split}
	\end{align}
	\end{proposition}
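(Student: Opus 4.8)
The plan is to prove \eqref{eq:tcomponentdecompositionG} by applying two localization theorems in succession, exploiting the fact that $h$ commutes with the $\To$-action and so preserves $M^{\To}$ and acts on the cohomology of each $\To$-fixed component. First I would invoke Wu's meromorphic $\To$-localization \cite{wuInstantonComplexHolomorphic2003} to reduce the $t$-graded, $\mu_p$-equivariant index of $(M,L)$ to the compact fixed locus $M^{\To}$, and then apply the classical holomorphic Lefschetz fixed point formula \cite{atiyahIndexEllipticOperators1968} for the residual $\mu_p$-action generated by $h$ on $M^{\To}$. Concretely, Wu's theorem (whose hypotheses hold by the meromorphicity/semi-projectivity assumption on $M$) expresses the index as a sum over the components $F$ of $M^{\To}$, each of which is compact and smooth projective, in the $\mu_p$-equivariant form
\[
\chi_{\To}(M,L,h) = \sum_{F} (-1)^{\nu_F} \sum_{q\geq 0} (-1)^q \tr\!\left(h : H^q\!\big(F,\ \omega(N_F)\cdot L_{\mid F}\big)\right),
\]
where $N_F$ is the normal bundle of $F$ in $M$ and $\nu_F=\rank(N_F^{-})$.

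The key bookkeeping step behind this form is the expansion of the inverse equivariant Euler class in $\Z[[t^{-1}]]$. Splitting $N_F=N_F^{+}\oplus N_F^{-}$ by $\To$-weight and writing $1-\ell^{-1}=-\ell^{-1}(1-\ell)$ for each negative-weight line $\ell$ occurring in $N_F^{-}$, summation of the resulting geometric series yields
\[
\frac{1}{\lambda(N_F^{*})} = (-1)^{\nu_F}\det(N_F^{-})\cdot s(N_F^{-})\cdot s\big((N_F^{+})^{*}\big) = (-1)^{\nu_F}\,\omega(N_F),
\]
which simultaneously produces the determinant twist $\det(N_F^{-})$ and the sign $(-1)^{\nu_F}$, matching \eqref{defomega}. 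Since $\omega(W)$ depends only on the strictly positive and negative $\To$-weight parts of $W$, and the tangent directions to $F$ are $\To$-fixed, I have $\omega(N_F)_{\mid M_a}=\omega(T_a M)$ for any component $M_a\subset F$ of $M^{\To\times\mu_p}$; thus I may freely replace $N_F$ by the full tangent bundle $T_aM$.

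Next I would apply the Atiyah--Singer/Atiyah--Bott holomorphic Lefschetz fixed point formula to the finite $\mu_p$-action on the compact $F$ and the equivariant virtual bundle $\mathcal V=\omega(N_F)\cdot L_{\mid F}$. The $h$-fixed locus in $F$ is exactly the union of those components $M_a$ of $M^{\To\times\mu_p}$ contained in $F$, and the formula reads
\[
\sum_{q\geq 0}(-1)^q\tr\!\left(h:H^q(F,\mathcal V)\right) = \sum_{M_a\subset F}\int_{M_a}\frac{\ch(\mathcal V_{\mid M_a})}{\lambda\big((N_{M_a/F})^{*}\big)}\,\Td M_a .
\]
Identifying normal bundles gives $TF_{\mid M_a}=(T_aM)^{\To}$ with $\mu_p$-fixed part $TM_a=(T_aM)^{\To\times\mu_p}$, so $N_{M_a/F}=(T_aM)^{\To}/(T_aM)^{\To\times\mu_p}$; the denominator $\lambda\big((N_{M_a/F})^{*}\big)$ is invertible in $S^{-1}\cdot K^0_{\mu_p}(M_a)\otimes\Q$ by Proposition \ref{pro:Inversion}, and is precisely the inverse factor entering $\Omega$. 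Combining the two displays, using $\nu_a=\nu_F$ for $M_a\subset F$, multiplicativity of $\ch$, and the identity \eqref{eq:Omegaintermsofomega}, the $\omega$-factor from the torus localization and the $\lambda^{-1}$-factor from the $\mu_p$-Lefschetz formula merge into $\Omega(T_aM)$, and the double sum over $F$ and $M_a\subset F$ becomes $\sum_{a\in A}$, yielding \eqref{eq:tcomponentdecompositionG}.

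The main obstacle is the rigorous execution of the first step in the non-compact, meromorphic setting: one must check that Wu's hypotheses are met, that the Bialynicki--Birula picture allows the $t$-graded index to be pushed to $M^{\To}$ \emph{while retaining the full $\mu_p$-equivariant structure}, and that the chosen expansion direction $t^{-1}\to 0$ is consistent with the convergence of the weight series, so that the signs and determinant twists in $\omega$ come out correctly. Once this $\mu_p$-equivariant torus localization is in place, the second step is a direct application of a classical theorem, and the only remaining care is the algebraic manipulation in the localized ring $S^{-1}\cdot K^0_{\To\times\mu_p}(M_a)\otimes_{\Z}\Q[[t^{-1}]]$, which is controlled by Proposition \ref{pro:Inversion} together with \eqref{eq:Omegaintermsofomega}.
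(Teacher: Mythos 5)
Your proposal is correct and takes essentially the same route as the paper: the paper's proof likewise consists of applying Wu's $\To$-localization (Theorem \ref{thmWu}) followed by the Atiyah--Bott Lefschetz formula \eqref{Lefshetz}, and it singles out as the key point exactly what you flag, namely that Wu's identity \eqref{eq:IdentificationKTheory2} must be promoted to an identity of $\To\times\mu_p$-representations so that traces of $h$ can be taken componentwise. Your writeup merely fills in the bookkeeping (the merge of the $\omega$-factor and the $\lambda^{-1}$-factor into $\Omega$ via \eqref{eq:Omegaintermsofomega}, the normal-bundle identifications, and the signs) that the paper dismisses as ``a simple computation.''
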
  Proposition \ref{pro:generalloc} follows essentially from a combination of a $\To$-localization result of Wu \cite{wuInstantonComplexHolomorphic2003} with the Atiyah-Bott fixed point formula \cite{atiyahIndexEllipticOperators1968}, also known as the Lefshetz fixed-point formula. In \cite{wuInstantonComplexHolomorphic2003}, Wu considers the case where $\mu_p$ act trivially on $M$. In this case $\{M_a\}_{a \in A}$ is equal to the set of components of $M^{\To}$ and in \cite[\S 4.2]{wuInstantonComplexHolomorphic2003} the following is proven.
\begin{theorem}[\cite{wuInstantonComplexHolomorphic2003}] 
		\label{thmWu}  In $\To$-equivariant $K$-theory we have 
\begin{align} \begin{split} \label{eq:IdentificationKTheory2}
&\sum_{\substack{m \in \Z, \\ q\geq 0}}  (-1)^q H^q_m(M,L) \cdot  t^m= \sum_{\substack{m \in \Z, \\ q \geq 0
		}} \sum_{a \in A} (-1)^{\nu_{a}+q} H_m^q(M_a, \omega(T_a;)\cdot L_{\mid M_a}) \cdot t^m.
\end{split} 
\end{align}
\end{theorem}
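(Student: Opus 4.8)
The statement is Wu's holomorphic $\To$-localization theorem, so the quickest route is simply to cite \cite{wuInstantonComplexHolomorphic2003}; the plan below reconstructs the argument. The engine is the Białynicki-Birula decomposition of the $\To$-action. Write $N_a$ for the normal bundle of a fixed component $M_a$ and $N_a = N_a^+\oplus N_a^-$ for its splitting into the strictly positive and strictly negative $\To$-weight parts, so that $T_a M = TM_a \oplus N_a^+ \oplus N_a^-$ and $\nu_a = \rank N_a^-$. The meromorphicity hypothesis (Wu's Assumption 2.15, guaranteed by semiprojectivity) ensures that for every $x\in M$ the limit $\lim_{\lambda\to 0}\lambda\cdot x$ exists and lands in some $M_a$, producing an attracting stratification $M = \bigsqcup_a S_a$ in which $p_a\colon S_a \to M_a$ is a $\To$-equivariant affine bundle modeled on $N_a^+$, with normal bundle of $S_a$ in $M$ equal to $N_a^-$.

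First I would promote this stratification to a filtration of $M$ by closed $\To$-invariant subvarieties, ordered by the weight of a suitable exhausting function, so that the successive locally closed strata are the $S_a$. The associated long exact sequences of cohomology with supports express the class $\sum_{m,q}(-1)^q H^q_m(M,L)\,t^m$ as a sum over $a\in A$ of local contributions supported along each $S_a$.

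Next I would evaluate each local contribution. Cohomology with supports along $S_a$ is computed by the Koszul--Thom resolution of $N_a^-$: it is concentrated in relative cohomological degree $\nu_a$ (which produces the sign $(-1)^{\nu_a}$), and along the fibre $\C^{\nu_a}$ the local cohomology $H^{\nu_a}_{0}(\C^{\nu_a},\mathcal{O})$ is, as a $\To$-representation, $\det(N_a^-)\cdot s(N_a^-)$. Pushing down the affine bundle $p_a$ contributes $p_{a*}\mathcal{O}_{S_a}=s((N_a^+)^*)$, the polynomial functions along the fibres modeled on $N_a^+$. Combining these, the stratum $S_a$ contributes $(-1)^{\nu_a}H^*\!\left(M_a,\, L|_{M_a}\cdot s((N_a^+)^*)\cdot s(N_a^-)\cdot \det(N_a^-)\right)=(-1)^{\nu_a}H^*\!\left(M_a,\,\omega(T_a M)\cdot L|_{M_a}\right)$, since the weight-zero summand $TM_a$ does not enter $\omega$. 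Summing over $a$ yields the claimed identity.

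The main obstacle will be analytic rather than formal: one must show that each $t^m$-weight space on both sides is genuinely finite-dimensional and that the infinite sums $s((N_a^+)^*)$ and $s(N_a^-)$ make sense in the completed ring $K^0_{\To}(M_a)[[t^{-1}]]$. This is exactly where meromorphicity is used: since $N_a^+$ has positive weights, $s((N_a^+)^*)$ carries only nonpositive $t$-powers, and likewise $\det(N_a^-)\cdot s(N_a^-)$ pushes weights downward, so for fixed $m$ only finitely many strata and symmetric powers contribute, while properness of the $M_a$ makes each term finite-dimensional. Controlling this convergence, and verifying that the support spectral sequence degenerates at the level of alternating sums, is the crux of the argument.
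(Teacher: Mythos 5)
The paper offers no proof of this statement at all---it is quoted directly from Wu's work \cite{wuInstantonComplexHolomorphic2003}---so the only comparison available is with the cited source, whose argument your sketch reconstructs faithfully: the Bia{\l}ynicki-Birula attracting stratification, a filtration by closed invariant subvarieties, and the local-cohomology (Koszul--Thom) computation along each stratum are exactly the ingredients of Wu's proof, and your bookkeeping is correct, with the codimension $\nu_a=\rank(N_a^-)$ producing the sign, the local cohomology of the negative normal directions producing $\det(N_a^-)\cdot s(N_a^-)$, and the pushforward along the affine bundle $p_a$ producing $s((N_a^+)^*)$, which together recover $\omega(T_aM)$ since the weight-zero part $TM_a$ drops out. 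You also correctly identify the genuine analytic content (weight-wise finite-dimensionality and convergence in $K^0_{\To}(M_a)[[t^{-1}]]$, guaranteed by meromorphicity/semiprojectivity) as the crux rather than the formal stratification argument, so the proposal is consistent with the paper's treatment.
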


We now recall the Atiyah-Bott Lefshetz formula \cite{atiyahIndexEllipticOperators1968}. Let $Z$ be a smooth projective complex variety with an algebraic $\mu_p$-action generated by an isomorphism $h:Z \rightarrow Z$. Let $W \rightarrow Z$ be a $\mu_p$-equivariant algebraic vector bundle. Consider the equivariant index
$$\chi(Z,W,h)=\sum_{q\geq 0} (-1)^q \tr( h : H^q(Z,W)\rightarrow H^q(Z,W)).$$ 
Let $Z^{\mu_p}$ denote the fixed locus. Write $B:=\pi_0(Z^{\mu_p})$ and denote by $\{Z_b\}_{b \in B}$ the components of $Z^{\mu_p}$. For each component $Z_b$, denote by $N^*(Z_b)$ the conormal of $Z_b\subset Z$. The components are of course smooth and projective. Fix $b \in B$. We have natural isomorphisms \begin{equation} K^0_{\mu_p}(Z_b)\cong\Z[\mu_p]\otimes_{\Z} K^0(Z_b) \cong \bigoplus_{l=0}^{p-1} \hat{\zeta}^l \cdot K^0(Z_b),
\end{equation} the leftmost of which is an isomorphism of $\Z[\mu_p]$-algebras and the rightmost of which is an isomorphism of abelian groups. For each $l \in \{0,...,p-1\}$, we denote by $\pi_l:K^0_{\mu_p}(Z_b) \rightarrow K^0(Z_b)$ the projection onto the coefficient of $\hat{\zeta}^l$. Similarly, for an equivariant algebraic vector bundle $V \rightarrow Z_b$, we also use the notation $\pi_l(V)$ to denote the algebraic vector sub-bundle of $V \rightarrow Z_b$ on which $\mu_p$ act with weight $\hat{\zeta}^l$. Recall the multiplicatively closed subset $S \subset \Z[\mu_p]$ generated by the elements $1-\hat{\zeta}^l$, where $l$ runs through the set $\{1,...,p-1\}.$
\begin{proposition} \label{pro:Inversion}
Let $V\rightarrow Z_b$ be a $\mu_p$-equivariant algebraic vector bundle. Then $\lambda(V / V^{\mu_p}))$ is invertible in $S^{-1}\cdot K^0_{\mu_p}(Z)\otimes \Q$. Moreover, the equivariant rank $\rank(\lambda(V/ V^{\mu_p}))$ is an element of $S$, and if we set $r_l= \rank(\pi_l(V))$, then the following two formuale holds
\begin{align} \label{eq:inversionformula} \begin{split} & \rank(\lambda(V/ V^{\mu_p}))=\prod_{l=1}^{p-1} (1-\hat{\zeta}^l)^{r_l},
\\&    \lambda(V / V^{\mu_p})^{-1}=\rank(\lambda(V/ V^{\mu_p}))^{-1}\sum_{a=0}^{\dim(Z_b)}\left(\frac{(\rank(\lambda(V/ V^{\mu_p}))-\lambda(V / V^{\mu_p})}{\rank(\lambda(V/ V^{\mu_p}))}\right)^a.
    \end{split}
\end{align}
\end{proposition}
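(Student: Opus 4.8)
The plan is to carry out all computations inside $S^{-1}\cdot K^0_{\mu_p}(Z_b)\otimes\Q$ (the natural home of the class, which maps to $S^{-1}\cdot K^0_{\mu_p}(Z)\otimes\Q$), using the decomposition $K^0_{\mu_p}(Z_b)\cong\bigoplus_{l=0}^{p-1}\hat{\zeta}^l\cdot K^0(Z_b)$ recalled above. First I would observe that, because $\mu_p$ acts on the fibres of $V$ through its isotypic weight decomposition, the fixed subbundle equals the weight-zero part, $V^{\mu_p}=\pi_0(V)$. Hence $V/V^{\mu_p}\cong\bigoplus_{l=1}^{p-1}\hat{\zeta}^l\cdot\pi_l(V)$ as $\mu_p$-equivariant bundles, and multiplicativity of $\lambda$ on direct sums gives $\lambda(V/V^{\mu_p})=\prod_{l=1}^{p-1}\lambda\bigl(\hat{\zeta}^l\cdot\pi_l(V)\bigr)$.

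Second, I would compute the rank. Restriction to a point $i\colon\{*\}\hookrightarrow Z_b$ realises the rank map $\rank=i^*\colon K^0_{\mu_p}(Z_b)\to\Z[\mu_p]$, and as a pullback it is a homomorphism of $\lambda$-rings. Therefore $\rank(\lambda(V/V^{\mu_p}))=\lambda\bigl(\sum_{l=1}^{p-1}r_l\hat{\zeta}^l\bigr)=\prod_{l=1}^{p-1}(1-\hat{\zeta}^l)^{r_l}$, where I use $\lambda(\hat{\zeta}^l)=1-\hat{\zeta}^l$ for the one-dimensional representation $\hat{\zeta}^l$. This is the first asserted formula; since every factor $1-\hat{\zeta}^l$ with $1\le l\le p-1$ is by definition a generator of $S$, the element $R:=\rank(\lambda(V/V^{\mu_p}))$ lies in $S$ and is invertible in the localization.

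The crux, and the step I expect to be the main obstacle, is to prove that $N:=R-\lambda(V/V^{\mu_p})$ is nilpotent. I would first note that the structure map $c\colon Z_b\to\{*\}$ provides a ring-theoretic section $c^*$ of $\rank=i^*$, so $\Ker(\rank)$ is an ideal, and $N\in\Ker(\rank)$ since $\rank(N)=R-R=0$. Rationally the trivial-action identification reads $K^0_{\mu_p}(Z_b)\otimes\Q\cong\Q[\mu_p]\otimes_\Q\bigl(K^0(Z_b)\otimes\Q\bigr)$, under which $\rank$ becomes $\id\otimes\rank$; hence $\Ker(\rank)\otimes\Q\cong\Q[\mu_p]\otimes\bigl(\Ker(\rank\colon K^0(Z_b)_\Q\to\Q)\bigr)$. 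The Chern character $\ch\colon K^0(Z_b)_\Q\xrightarrow{\sim}\bigoplus_i H^{2i}(Z_b,\Q)$ identifies this augmentation ideal with $\bigoplus_{i\ge1}H^{2i}(Z_b,\Q)$, which is nilpotent with $\bigl(\bigoplus_{i\ge1}H^{2i}\bigr)^{\dim Z_b+1}=0$ for degree reasons; tensoring with the finite-dimensional semisimple algebra $\Q[\mu_p]$ leaves this bound unchanged, so $N^{\dim Z_b+1}=0$.

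Finally I would assemble the inverse. Writing $\lambda(V/V^{\mu_p})=R-N=R\,(1-R^{-1}N)$ with $R$ invertible and $R^{-1}N$ nilpotent of index at most $\dim Z_b+1$, the factor $1-R^{-1}N$ is invertible with geometric-series inverse $\sum_{a=0}^{\dim Z_b}(R^{-1}N)^a$. This yields $\lambda(V/V^{\mu_p})^{-1}=R^{-1}\sum_{a=0}^{\dim Z_b}(R^{-1}N)^a$, which is precisely the second asserted formula once $N=R-\lambda(V/V^{\mu_p})$ is substituted, and it simultaneously exhibits the invertibility of $\lambda(V/V^{\mu_p})$ in $S^{-1}\cdot K^0_{\mu_p}(Z_b)\otimes\Q$.
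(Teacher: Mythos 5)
Your overall route coincides with the paper's: split $V/V^{\mu_p}$ into weight subbundles and use multiplicativity of $\lambda$, compute the equivariant rank factor by factor to get $\prod_{l=1}^{p-1}(1-\hat{\zeta}^l)^{r_l}\in S$, show that the rank-zero remainder $N=\rank(\lambda(V/V^{\mu_p}))-\lambda(V/V^{\mu_p})$ is nilpotent of index at most $\dim Z_b+1$, and invert by a finite geometric series. The rank computation via restriction to a point, the identification $\Ker(\rank)\otimes\Q\cong \Q[\mu_p]\otimes\Ker\bigl(\rank\colon K^0(Z_b)_{\Q}\to\Q\bigr)$, the preservation of the nilpotency bound under tensoring with $\Q[\mu_p]$, and the series inversion are all correct.

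However, the justification of what you yourself call the crux contains a genuine error: for the $K$-theory used here, which is the Grothendieck ring of \emph{algebraic} vector bundles, the Chern character $\ch\colon K^0(Z_b)\otimes\Q\to\bigoplus_i H^{2i}(Z_b,\Q)$ is \emph{not} an isomorphism, nor even injective. Already for a smooth projective curve $C$ of genus at least one, $K^0(C)\cong\Z\oplus\Pic(C)$ via $(\rank,\det)$, and every non-torsion $L\in\Pic^0(C)$ gives a nonzero class $L-\mathcal{O}_C$ in $K^0(C)\otimes\Q$ with $\ch(L-\mathcal{O}_C)=c_1(L)=0$ in $H^*(C,\Q)$; meanwhile $H^{\mathrm{even}}(C,\Q)\cong\Q^2$ is finite dimensional. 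Because your $\ch$ has a kernel, from $\ch(N)^{\dim Z_b+1}=0$ you cannot conclude $N^{\dim Z_b+1}=0$, so nilpotency — and hence invertibility — is left unproved as written. The statement you need is true, and the repair is exactly what the paper does: replace singular cohomology by the Chow ring $A^*(Z_b)\otimes_{\Z}\Q$, where by \cite[Example 15.2.16]{fultonIntersectionTheory1984} the Chern character \emph{is} a ring isomorphism $K^0(Z_b)\otimes_{\Z}\Q\to A^*(Z_b)\otimes_{\Z}\Q$; the augmentation ideal then corresponds to $\bigoplus_{i\geq 1}A^i(Z_b)\otimes\Q$, which is nilpotent of the stated index for degree reasons. (Equivalently, one may invoke the coniveau filtration, $\Ker(\rank)^{\dim Z_b+1}\subset F^{\dim Z_b+1}K^0(Z_b)=0$.) With that substitution the rest of your argument goes through verbatim and reproduces the paper's proof; the paper's only other cosmetic difference is that it applies the splitting principle to reduce the rank computation to line bundles, where you instead use that restriction to a point is a $\lambda$-ring homomorphism, which is equally valid.
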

The first part of this proposition is often stated in the case where $V=T^*_{\mid Z_b}Z$, so that $V/ V^{\mu_p}$ is the conormal of $Z_b \hookrightarrow Z$. For the sake of completeness, we present a proof.
\begin{proof} Let $A(Z_b)$ be the Chow ring of $Z_b$. By \cite[Example 15.2.16]{fultonIntersectionTheory1984}) the Chern character induces an isomorphism $ \ch: K^0(Z_b)\otimes_{\Z}\Q \overset{\sim}{\rightarrow} A(Z_b)\otimes_{\Z}\Q.$    Clearly, in our setting, this implies that the equivariant Chern character induces an isomorphism 
    \begin{equation} \label{eq:isoKA}
     \ch: S^{-1} \cdot K_{\mu_p}^0(Z_b)\otimes_{\Z}\Q \overset{\sim}{\rightarrow} S^{-1}\cdot \Z[\mu_p]\otimes_{\Z} A(Z_b)\otimes_{\Z}\Q.
    \end{equation}
    We will refer to the target ring of this isomorphism as the equivariant Chow ring. Define algebraic vector bundles $V_l$ where $l$ runs through the set $\{1,...,p-1\}$ by $\pi_l(V)=V_l$. As $\lambda$ is a homomorphism we have that 
    \begin{equation}
    \lambda(V / V^{ \mu_p})= \prod_{l=1}^{p-1} \lambda(\hat{\zeta}^l\cdot V_l).
    \end{equation}
    It is enough to argue that for each $l \in \{1,...,p-1\}$ the factor $\lambda(\hat{\zeta}^l\cdot V_l)$ is invertible. Therefore, we will rename $V_l=V$ and set $r=\rank(V_l).$ By the splitting principle \cite[Section 3.2]{fultonIntersectionTheory1984}, there exists a smooth projective variety $Z'$ and a flat morphism $q:Z'\rightarrow Z_b$ such that $q^*:A^*(Z_b) \rightarrow A^*(Z')$ is injective and $q^*(V)\rightarrow Z'$ splits into a direct sum of line bundles, say $q^*(V)=\oplus_{b=1}^r L_b$. In the Chow ring we have that
    \begin{equation} \label{eq:equirank2}
\ch_0(\lambda(V))=\ch_0\left(\prod_{b=1}^r\lambda(L_b)\right)=(1-\hat{\zeta}^l)^r \in S.
    \end{equation}
Further, in the Chow ring we have that
\begin{equation} \label{eq:suminvertibleandnilpotent}
    \ch(\lambda(V))=\ch_0(\lambda(V))-(\ch_0(\lambda(V))-\ch(\lambda(V))).
\end{equation}
Since $\ch_0(\lambda(V)) \in S$ is invertible in the equivariant Chow ring, and since the element $(\ch_0(\lambda(V))-\ch(\lambda(V)))$ nilpotent and $A^*(Z_b)$ is a complete graded ring with highest grade equal to $\dim(Z_b)$, it is clear from \eqref{eq:suminvertibleandnilpotent} that $\ch(\lambda(V))$ is invertible with inverse given by
\begin{align} \label{eq:geometricseries}\begin{split}
&\ch(\lambda(V))^{-1}=\ch_0(\lambda(V))^{-1}\left(1-\frac{\ch_0(\lambda(V))-\ch(\lambda(V))}{\ch_0(\lambda(V))}\right)^{-1}
\\&=\ch_0(\lambda(V))^{-1}\sum_{a=0}^{\dim(Z_b)}\left(\frac{\ch_0(\lambda(V))-\ch(\lambda(V))}{\ch_0(\lambda(V))}\right)^{a}.
   \end{split}
\end{align} Since $\ch(\rank(W))=\ch_0(W)$ for any equivariant vector bundle $W$ and since \eqref{eq:isoKA} is an isomorphism of rings we see that the identities in \eqref{eq:inversionformula} follows from \eqref{eq:equirank2} and \eqref{eq:geometricseries}. Our reason for resorting to the isomorphism \eqref{eq:isoKA} is simply that the grading on $A^*(Z_b)$ makes it transparent that $(\ch_0(\lambda(V))-\ch(\lambda(V))$ is nilpotent.\end{proof}

We now state the Atiyah-Bott Lefshetz formula. Note that by Proposition \ref{pro:Inversion} the class $\lambda(N^*(Z_b))$ is invertible in $S^{-1} \cdot K_{\mu_p}^0(Z_b)$ for each $b \in B$.
\begin{theorem}[\cite{atiyahIndexEllipticOperators1968}]  We have that
	\begin{align}  \begin{split} \label{Lefshetz}
&\chi(Z,W,h)= \sum_{b \in B}\int \limits_{Z_b} \ch\left(\frac{W_{\mid Z_b}}{\lambda(N^*(Z_b))} \right) \Td Z_b.
\end{split} 
\end{align}
\end{theorem}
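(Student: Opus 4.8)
The plan is to deduce \eqref{Lefshetz} from the localization theorem in equivariant $K$-theory together with Grothendieck--Riemann--Roch, feeding in the invertibility already secured in Proposition \ref{pro:Inversion}. First I would repackage the left-hand side $K$-theoretically. The alternating sum $\sum_{q}(-1)^q H^q(Z,W)$ is the equivariant holomorphic Euler characteristic, a well-defined virtual character $\pi_*^{\mu_p}[W] \in \Z[\mu_p]$, where $\pi: Z \to \Spec \C$ is the structure morphism and $\pi_*^{\mu_p}$ is equivariant pushforward. By definition $\chi(Z,W,h)$ is the value of this virtual character at the generator $h$, so it suffices to compute $\pi_*^{\mu_p}[W]$ in $\Z[\mu_p]$ and then evaluate.

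The heart of the argument is the Atiyah--Segal localization theorem. Writing $\iota_b: Z_b \hookrightarrow Z$ for the inclusions of the fixed components, it asserts that after inverting $S$ the pushforward concentrates on the fixed locus:
\begin{equation}
\pi_*^{\mu_p}[W] = \sum_{b \in B}(\pi \circ \iota_b)_*^{\mu_p}\left(\frac{\iota_b^*[W]}{\lambda(N^*(Z_b))}\right) \in S^{-1}\cdot \Z[\mu_p].
\end{equation}
The only thing needed to make sense of each summand is that $\lambda(N^*(Z_b))$ be invertible in $S^{-1}\cdot K^0_{\mu_p}(Z_b)\otimes \Q$; this is exactly Proposition \ref{pro:Inversion} applied to $V = N^*(Z_b)$, since the $\mu_p$-action on the conormal bundle of the fixed locus has no invariant part, whence $N^*(Z_b) = N^*(Z_b)/N^*(Z_b)^{\mu_p}$.

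It then remains to evaluate each fixed-locus term by Grothendieck--Riemann--Roch. Because $\mu_p$ acts trivially on $Z_b$, every equivariant bundle there splits into $\hat{\zeta}^l$-weight subbundles and the equivariant pushforward is computed weight by weight by the ordinary pushforward $(\pi\circ\iota_b)_*$. Applying GRR on each weight component, $\ch((\pi\circ\iota_b)_* V) = \int_{Z_b}\ch(V)\Td Z_b$, and reassembling the weights through the isomorphism \eqref{eq:isoKA}, I would find that the Chern character of the $b$-th summand equals $\int_{Z_b}\ch(\iota_b^*[W]/\lambda(N^*(Z_b)))\Td Z_b$ as a class in $S^{-1}\cdot \Z[\mu_p]\otimes \Q$. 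Finally, evaluating the resulting character at $h$ substitutes the genuine eigenvalues of $h$ for the formal generator $\hat{\zeta}$, turning $\ch$ and $\lambda$ into their complex specializations; summing over $b$ then yields \eqref{Lefshetz}.

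The main obstacle is the localization step itself, namely establishing that the equivariant pushforward genuinely concentrates on $Z^{\mu_p}$ with exactly the normal Euler-class correction $\lambda(N^*(Z_b))^{-1}$. This is the substantive content of \cite{atiyahIndexEllipticOperators1968}; it rests on the self-intersection formula, which in equivariant $K$-theory computes $\iota_b^*(\iota_b)_*$ as multiplication by $\lambda(N^*(Z_b))$, combined with the fact that, after inverting $S$, the restriction maps to the fixed locus become isomorphisms. The remaining ingredients --- invertibility of $\lambda(N^*)$ and GRR --- are supplied by Proposition \ref{pro:Inversion} and are otherwise standard, so the bookkeeping in the last two steps is routine once localization is in hand.
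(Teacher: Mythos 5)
The paper does not actually prove this statement: it is quoted directly from the cited reference \cite{atiyahIndexEllipticOperators1968}, the only local input being the remark that Proposition \ref{pro:Inversion} makes $\lambda(N^*(Z_b))$ invertible so that the right-hand side is well defined. Your proposal, by contrast, sketches a genuine proof, namely the standard algebraic Lefschetz--Riemann--Roch argument: interpret $\chi(Z,W,h)$ as the evaluation at $h$ of the equivariant pushforward $\pi_*^{\mu_p}[W]\in\Z[\mu_p]$, concentrate that pushforward on the fixed locus after inverting $S$ (self-intersection formula plus the fact that restriction to $Z^{\mu_p}$ becomes an isomorphism), compute each fixed-component contribution by ordinary Riemann--Roch weight by weight (legitimate since $\mu_p$ acts trivially on $Z_b$, so $K^0_{\mu_p}(Z_b)\cong K^0(Z_b)\otimes_{\Z}\Z[\mu_p]$), and finally specialize $\hat{\zeta}\mapsto\zeta$. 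The steps are sound, and the concentration step you defer to the literature is precisely Nielsen's localization theorem, which the paper states immediately after this result as Theorem \ref{thmNielsen}, with inverse $W \mapsto i_!(W\cdot\lambda^{-1}(N^*(Z^{\mu_p})))$; so your argument can in fact be closed using the paper's own ingredients rather than a black-box citation. Two small points are worth making explicit if you write this up: first, the evaluation $\hat{\zeta}\mapsto\zeta$ is only defined on $S^{-1}\cdot\Z[\mu_p]\otimes\Q$ because every element of $S$ maps to a nonzero complex number (as $1-\zeta^l\neq 0$ for $1\le l\le p-1$), and this is what lets you pass from the localized identity to an identity of actual traces; second, your reduction $N^*(Z_b)=N^*(Z_b)/N^*(Z_b)^{\mu_p}$ uses that in characteristic zero the fixed locus of a finite-order automorphism is smooth with tangent bundle the invariant part of $T_{\mid Z_b}Z$, so the conormal has no invariant summand --- true, but it deserves a sentence.
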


\begin{proof}[Proof of Proposition \ref{pro:generalloc}]
	The proof is a simple computation which combines \eqref{eq:IdentificationKTheory2}  with \eqref{Lefshetz}. The key point is that the identity \eqref{eq:IdentificationKTheory2} can be generalized to an identity in the $K$-theory ring of $\To \times \mu_p$-representations.
\end{proof}
We now recall Nielsens' localization result \cite{nielsenDiagonalizablyLinearizedCoherent1974}. As above, we can regard $S$ as a multiplicatively closed subset of $K^0_{\mu_p}(Z)$ and of $K^0_{\mu_p}(Z^{\mu_p})$.

\begin{theorem}[\cite{nielsenDiagonalizablyLinearizedCoherent1974}]  \label{thmNielsen} Pull-back along the inclusion $i: Z^{\mu_p} \rightarrow Z$ induces an isomorphism $S^{-1} \cdot K_{\mu_p}^0(Z)\overset{\sim}{\rightarrow}S^{-1} \cdot  K_{\mu_p}^0(Z^{\mu_p})$. The inverse is given by 
	\begin{equation} \label{Inverse}
	W \mapsto i_!(W \cdot \lambda^{-1}(N^*(Z^{\mu_p}))).
	\end{equation}
\end{theorem}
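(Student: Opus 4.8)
The plan is to derive the theorem from two standard facts about equivariant $K$-theory: a concentration (vanishing) principle for the localization at $S$, and the self-intersection formula for the closed immersion $i$. Since $Z$ and $Z^{\mu_p}$ are smooth projective varieties, $G$-theory coincides with $K$-theory and the proper push-forward $i_!=i_*$ on coherent sheaves is available. The key algebraic observation, which I would record first, is that for each $l\in\{1,\dots,p-1\}$ and each homomorphism $\mathrm{ev}_g:\Z[\mu_p]\to\C$, $\hat\zeta\mapsto g$, one has $\mathrm{ev}_g(1-\hat\zeta^l)=1-g^l$; this is nonzero for every $p$-th root of unity $g\neq 1$ (using that $p$ is prime, so $l\not\equiv 0$ forces $g^l\neq 1$) and vanishes only for $g=1$. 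Hence inverting $S$ annihilates exactly the $\Z[\mu_p]$-modules whose support meets only the augmentation character.

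First I would prove the concentration statement: if $Y$ is a $\mu_p$-variety with $Y^{\mu_p}=\emptyset$, then $S^{-1}K^0_{\mu_p}(Y)=0$. As $p$ is prime, every point stabiliser is trivial or all of $\mu_p$, so $Y^{\mu_p}=\emptyset$ forces the $\mu_p$-action to be free; by Segal's computation of the support of $K^0_{\mu_p}(Y)$ as a $\Z[\mu_p]$-module, this support lies over the characters $g$ with $Y^g\neq\emptyset$, i.e.\ only over $g=1$, and the previous paragraph then gives the vanishing.

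Next I would apply this to the closed--open decomposition of $Z$ by $Z^{\mu_p}$ and its complement $U=Z\setminus Z^{\mu_p}$. In the localization long exact sequence $\cdots\to K^0_{\mu_p}(Z^{\mu_p})\xrightarrow{\,i_*\,}K^0_{\mu_p}(Z)\xrightarrow{\,j^*\,}K^0_{\mu_p}(U)\to\cdots$, the $K$-theory groups of $U$ appearing on either side of $i_*$ vanish after applying the exact functor $S^{-1}$, because $U^{\mu_p}=\emptyset$; hence $S^{-1}i_*=S^{-1}i_!$ is an isomorphism. Finally, the self-intersection formula in equivariant $K$-theory gives $i^*i_!(\alpha)=\alpha\cdot\lambda(N^*(Z^{\mu_p}))$, where $\lambda(N^*)=\sum_k(-1)^k\lambda^k(N^*)$ is the $K$-theoretic Euler class of the conormal bundle. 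Since $Z^{\mu_p}$ is the full fixed locus, $N^*(Z^{\mu_p})$ has no trivial $\mu_p$-summand, so Proposition \ref{pro:Inversion} makes $\lambda(N^*(Z^{\mu_p}))$ invertible in $S^{-1}K^0_{\mu_p}(Z^{\mu_p})$. Writing $\phi(W)=i_!(W\cdot\lambda^{-1}(N^*(Z^{\mu_p})))$, the self-intersection formula yields $i^*\circ\phi=\mathrm{id}$; as $\phi$ is $i_!$ composed with multiplication by an invertible class, the previous step makes $\phi$ an isomorphism, and therefore $i^*$ is an isomorphism with inverse $\phi$, which is exactly formula \eqref{Inverse}.

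The hard part is the concentration statement, which is the genuine content of the theorem; a fully self-contained argument requires either Segal's support calculation for $K^0_{\mu_p}$ over the representation ring, or an induction over an equivariant stratification of $Y$ reducing to free orbits $\mu_p/\{1\}$, on each of which the $\Z[\mu_p]$-module structure factors through the augmentation and is therefore killed by $S^{-1}$. Once this vanishing, the localization exact sequence, and the self-intersection formula are granted, the remainder is formal.
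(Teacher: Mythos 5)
The paper offers no proof of Theorem \ref{thmNielsen}: it is imported as a known result from \cite{nielsenDiagonalizablyLinearizedCoherent1974}, so your attempt can only be measured against the standard argument in the literature --- which is essentially what you have reconstructed. Your skeleton (concentration over the fixed-point-free open part $U=Z\setminus Z^{\mu_p}$, the localization exact sequence, the equivariant self-intersection formula, and invertibility of $\lambda(N^*(Z^{\mu_p}))$ after inverting $S$, supplied by Proposition \ref{pro:Inversion} because $(N^*(Z^{\mu_p}))^{\mu_p}=0$) is the correct and standard architecture, and the formal endgame is right. In fact you need less than you invoke: injectivity of $S^{-1}i_!$ already follows from $i^*i_!(\alpha)=\alpha\cdot\lambda(N^*(Z^{\mu_p}))$ together with invertibility of $\lambda(N^*(Z^{\mu_p}))$, so only surjectivity of $S^{-1}i_!$, i.e.\ the vanishing $S^{-1}K^0_{\mu_p}(U)=0$ in degree zero, is needed, and the $G_1(U)$-term of the localization sequence can be bypassed entirely.

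The genuine gap lies in the two justifications you offer for that vanishing, which you rightly call the hard part. The stratification route fails: if $Y$ is irreducible with a free $\mu_p$-action, then $Y\to Y/\mu_p$ is a connected \'etale cover and splits over no dense Zariski-open subset, so no equivariant stratification reduces the computation to disjoint unions of orbits $\mu_p/\{1\}$; correspondingly, for a free but non-split action the $\Z[\mu_p]$-module structure on $K^0_{\mu_p}(Y)\cong K^0(Y/\mu_p)$ does \emph{not} factor through the augmentation --- under descent, $\hat{\zeta}$ acts as multiplication by the class $[L]$ of a $p$-torsion line bundle on $Y/\mu_p$, not as the identity. The Segal route is a category mismatch: Segal's support theorem concerns topological equivariant K-theory of spaces, and its algebraic analogue is precisely the concentration statement of Nielsen and Thomason, i.e.\ the crux of the theorem you are proving, so quoting it is close to circular. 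Fortunately the repair is simple and stays inside your framework: since $1-\hat{\zeta}$ acts on $K^0(Y/\mu_p)$ as multiplication by $1-[L]$, a rank-zero class, it acts nilpotently (use the coniveau filtration on the smooth quasi-projective quotient, or argue rationally via the Chern character exactly as in the paper's proof of Proposition \ref{pro:Inversion}); an element of $S$ acting both nilpotently and, after localization, invertibly forces $S^{-1}K^0_{\mu_p}(Y)=0$. With that substitution your proof is complete and is, in essence, the standard one.
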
 
We now analyze $K_{\mu_p}^0(Z^{\mu_p})$ in more detail. Recall that we have canonical isomorphisms
\begin{align}
& \label{iso:ring}K_{\mu_p}^0(Z^{\mu_p}) \cong K^0(Z^{\mu_p})\bigotimes_{\Z} \Z[\mu_p]  \cong \bigoplus_{j=0}^{p-1} \K^0(Z^{\mu_p}) \cdot \hat{\zeta}^j.
\end{align}
 Recall that for $j\in \{0,...,p-1\}$ we write $\pi_j$ for the projection onto the $j$'th summand in \eqref{iso:ring}. We introduce the special notation $W_0=\pi_0(W).$  We denote the universal localization homomorphism by $L: K^0_{\mu_p}(Z^{\mu_p}) \rightarrow S^{-1} \cdot K^0_{\mu_p}(Z^{\mu_p}).$ Recall that the kernel of $L$ is generated by all elements $r$ such that $s\cdot r=0$ for some $s \in S$. Let $Y$ be a formal variable. Recall that $\Z[\mu_p]\cong \Z[Y]/(Y^P-1)$ and that $Y^P-1=(Y-1)\Phi_p(Y)$ is the factorization of $Y^p-1$ into irreducible factors. As $(1-\hat{\zeta})\in S$,  it is therefore clear that we have $\ker(L)= K^0(Z^{\mu_p})\otimes_{\Z} (\Phi_p(\hat{\zeta})).$ Moreover, the following sequence of abelian groups is split exact\begin{equation}	0 \rightarrow \ker(L) \rightarrow K_{\mu_p}^0(Z^{\mu_p}) \overset{L}{\rightarrow}\im(L)\rightarrow 0,	\end{equation}
and a complement to $\ker(L)$ is given by the module  generated by all classes of the form
$W-W_0\cdot \Phi_p(\hat{\zeta})$, where $W$ ranges over all equivariant bundles. The following lemma is an elementary consequence of these facts.
\begin{lemma} \label{nytLemma} 

	If $V, W\in K_{\mu_p}^0(Z^{\mu_p}) $ are classes such that $V=W \mod \ker(L)$, then the following identity holds in $K_{\mu_p}^0(Z^{\mu_p})$
	\begin{equation}
V-V_0\cdot \Phi_p(\hat{\zeta})=W- W_0 \cdot \Phi_p(\hat{\zeta}).
	\end{equation}
\end{lemma}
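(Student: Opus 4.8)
The plan is to reduce everything to showing that the $\Z$-linear map $P\colon K^0_{\mu_p}(Z^{\mu_p})\to K^0_{\mu_p}(Z^{\mu_p})$ given by $P(W)=W-W_0\cdot\Phi_p(\hat{\zeta})$ vanishes on $\ker(L)$. Since the assignment $W\mapsto W_0=\pi_0(W)$ is a projection in the direct sum decomposition \eqref{iso:ring}, it is additive, and hence so is $P$. Consequently the asserted identity $P(V)=P(W)$ under the hypothesis $V=W \mod \ker(L)$ is equivalent to $P(V-W)=0$, so it suffices to prove that $P(U)=0$ for every $U\in\ker(L)$.

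The heart of the argument is an explicit description of $\ker(L)$. Recall that $\ker(L)=K^0(Z^{\mu_p})\otimes_{\Z}(\Phi_p(\hat{\zeta}))$, where $(\Phi_p(\hat{\zeta}))$ denotes the ideal of $\Z[\mu_p]$ generated by $\Phi_p(\hat{\zeta})$. As $p$ is prime we have $\Phi_p(\hat{\zeta})=\sum_{j=0}^{p-1}\hat{\zeta}^j$, and using $\hat{\zeta}^p=1$ one obtains the crucial relation $\hat{\zeta}\cdot\Phi_p(\hat{\zeta})=\Phi_p(\hat{\zeta})$, hence $\hat{\zeta}^j\Phi_p(\hat{\zeta})=\Phi_p(\hat{\zeta})$ for all $j$. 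Therefore for any element $\sum_j a_j\hat{\zeta}^j\in\Z[\mu_p]$ one has $\bigl(\sum_j a_j\hat{\zeta}^j\bigr)\Phi_p(\hat{\zeta})=\bigl(\sum_j a_j\bigr)\Phi_p(\hat{\zeta})$, so the generated ideal collapses to the single $\Z$-line $(\Phi_p(\hat{\zeta}))=\Z\cdot\Phi_p(\hat{\zeta})$. It follows that $\ker(L)=K^0(Z^{\mu_p})\cdot\Phi_p(\hat{\zeta})$, so every $U\in\ker(L)$ can be written as $U=M\cdot\Phi_p(\hat{\zeta})$ for some $M\in K^0(Z^{\mu_p})$.

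It then remains to evaluate $U_0$. Since $M$ lies in the $\hat{\zeta}^0$-summand $K^0(Z^{\mu_p})$ of \eqref{iso:ring}, the class $M\hat{\zeta}^j$ lies in the $j$-th summand, so $U=\sum_{j=0}^{p-1}M\hat{\zeta}^j$ is exactly the $\mu_p$-weight decomposition of $U$, whence $U_0=\pi_0(U)=M$. Therefore $U_0\cdot\Phi_p(\hat{\zeta})=M\cdot\Phi_p(\hat{\zeta})=U$, and $P(U)=U-U_0\cdot\Phi_p(\hat{\zeta})=0$, which finishes the proof. I do not expect any serious obstacle here; the only point that requires care is the relation $\hat{\zeta}\cdot\Phi_p(\hat{\zeta})=\Phi_p(\hat{\zeta})$, which is precisely what forces the ideal $(\Phi_p(\hat{\zeta}))$ to be one-dimensional over $\Z$ and makes the computation of $\pi_0$ immediate.
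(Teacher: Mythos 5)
Your proof is correct and takes essentially the same route as the paper: the paper declares the lemma ``an elementary consequence'' of the description $\ker(L)=K^0(Z^{\mu_p})\otimes_{\Z}(\Phi_p(\hat{\zeta}))$ and the splitting it records, and your argument is exactly that elementary verification, reducing by additivity to showing $U=U_0\cdot\Phi_p(\hat{\zeta})$ for $U\in\ker(L)$ via the relation $\hat{\zeta}\cdot\Phi_p(\hat{\zeta})=\Phi_p(\hat{\zeta})$.
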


\subsection{Application of Localization} Recall now the setup and the notation introduced in Section \ref{sec:introlocalization}. For each $l \in \{0,...,p-1\}$, let $\pi_l$ be the projection onto the $l$'th summand of $R_{i,j}$ in \eqref{decomp}.

\begin{definition}\label{def:zetagamma} For all $i \in \mathcal{I}$, all $j\in \tilde{\mathcal{J}_i}$ and all $l \in \{1,...,p-1\}$ define the integer $\tilde{\nu}_{i,j}$ and the rationals $\tilde{a}_{i,j,l}$ and $\tilde{b}_{i,j,l}$ by the formulae
	\begin{align}
	&\tilde{\nu}_{i,j}=\rank(T_{i,j}\tM_i^{-})+ \ch_0( \tilde{V}_{i,j}^{-}-\pi_0(\tilde{V}_{i,j}^{-})),
\\ 	&\tilde{a}_{i,j,l}=\rank(T\tM^{\To}_{i,j})+\frac{\ch_0\left( \pi_l(\tilde{V}^{\To}_{i,j}) -\pi_0(\tilde{V}^{\To}_{i,j})\right)}{p},
	\\& \tilde{b}_{i,j,l}=\rank(T_{i,j} \tM_i^{-}) + \frac{\ch_0\left( \pi_l(\tilde{V}^{-}_{i,j}) -\pi_0(\tilde{V}^{-}_{i,j})\right)}{p}.
	\end{align}
	Set $\tilde{\rho}_{i,j}= \sum_{l=1}^{p-1} l\cdot \tilde{b}_{i,j,l}$ and define
	$ \tilde{\zeta}_{i,j}= \hat{\zeta}^{\tilde{\rho}_{i,j}}\prod_{l=1}^{p-1}(1-\hat{\zeta}^l)^{-\tilde{a}_{i,j,l}}.$
\end{definition}
It will be seen in the proof of Theorem \ref{ThmMain} that the $\tilde{a}_{i,j,l}$ and the $\tilde{b}_{i,j,l}$ are in fact positive integers. Before given the proof of Theorem \ref{ThmMain} we state the following Lemma.
\begin{lemma} \label{lemmapthroot} Let $i\in \mathcal{I}$ and $j \in \mathcal{J}_i$. Set $d=\dim_{\C}(\M_{i,j}^{\To})$ and define the ring $R=\oplus_{l=0}^dH^{2l} (\M_{i,j}^{\To},\Q)\otimes_{\Z} (S^{-1}\cdot \Z[\mu_p])$. Assume $a,b \in R[[t^{-1}]]$ satisfy $a^p=b^p$, and that the degree zero components (with respect to the cohomological-grading) of the leading terms (with respect to the $t^{-1}$-grading)  of $a$ and $b$ are equal and non-zero. Then $a=b$.	\end{lemma}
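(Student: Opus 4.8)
The plan is to show that, after a harmless normalization, $a$ and $b$ differ by a $p$-th root of unity that must equal $1$ because it is a unit congruent to $1$ in a suitable complete filtration. \textbf{First I would record the structure of the coefficient ring.} Since each $H^{2l}(\M^{\To}_{i,j},\Q)$ is a $\Q$-vector space, the degree-zero part is $R_0:=H^0(\M^{\To}_{i,j},\Q)\otimes_{\Z}(S^{-1}\Z[\mu_p])\cong S^{-1}\Q[\mu_p]$; and because $\Q[\mu_p]\cong \Q\times\Q(\zeta)$ with $1-\hat\zeta^l\mapsto(0,\text{unit})$, localizing at $S$ kills the first factor, so $R_0\cong\Q(\zeta)$ is a field of characteristic zero. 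Writing $R_+=\bigoplus_{l=1}^{d}H^{2l}(\M^{\To}_{i,j},\Q)\otimes_{\Z}(S^{-1}\Z[\mu_p])$ for the positive cohomological-degree part, the cup product makes $R_+$ a nilpotent ideal with $R_+^{\,d+1}=0$, so $R=R_0\oplus R_+$ is a commutative $\Q$-algebra in which every element with nonzero degree-zero part is a unit (the remainder being nilpotent).

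\textbf{Next I would equip $R[[t^{-1}]]$ with a filtration measuring cohomological degree and $t^{-1}$-order simultaneously.} Set $F^k=\{\sum_n f_n t^{-n}:\ f_n\in\bigoplus_{l\ge k-n}H^{2l}(\M^{\To}_{i,j},\Q)\otimes_{\Z}(S^{-1}\Z[\mu_p])\ \text{for all }n\}$. A direct check gives $F^kF^{k'}\subseteq F^{k+k'}$ (the $t^{-m}$-coefficient of a product of an $F^k$- and an $F^{k'}$-element lands in cohomological degree $\ge(k+k')-m$), $\bigcap_k F^k=0$ (for fixed $n$ the condition forces $f_n=0$ once $k>n+d$), and $R[[t^{-1}]]\cong\varprojlim_k R[[t^{-1}]]/F^k$, so the ring is complete. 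Consequently $p$ is invertible, and for every $x\in F^1$ the element $1+x$ is a unit with inverse $\sum_{m\ge0}(-x)^m$, which converges since $x^m\in F^m\to0$.

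\textbf{With these preliminaries the argument is short.} Let $a_N t^{-N}$ and $b_M t^{-M}$ be the leading terms; by hypothesis $(a_N)_0=(b_M)_0\neq0$ in the field $R_0$, hence $(a_N^{\,p})_0=(a_N)_0^{\,p}\neq0$ and likewise for $b$, so the leading terms of $a^p$ and $b^p$ are $a_N^{\,p}t^{-pN}$ and $b_M^{\,p}t^{-pM}$. The equality $a^p=b^p$ then forces $N=M$, and factoring out the non-zero-divisor $t^{-N}$ reduces us to the case in which both leading terms sit in degree $t^0$ with unit coefficients $a_N,b_N$. Now $b$ is a unit of $R[[t^{-1}]]$, so $a=b$ is equivalent to $w:=ab^{-1}=1$; here $w^p=a^pb^{-p}=1$, and the degree-zero part of the $t^0$-coefficient of $w$ is $(a_N)_0(b_N)_0^{-1}=1$, so $w=1+\eta$ with $\eta\in F^1$. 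Expanding, $0=w^p-1=\eta\,g$ with $g=\sum_{k=1}^{p}\binom{p}{k}\eta^{k-1}=p+h$ and $h\in F^1$; since $p$ is invertible and $1+p^{-1}h\in1+F^1$ is a unit, $g=p(1+p^{-1}h)$ is a unit, whence $\eta=0$ and $w=1$, i.e. $a=b$. The only genuinely delicate point is the bookkeeping of the second paragraph—verifying that $F^\bullet$ is multiplicative, separated and complete, and that $\eta$ indeed lands in $F^1$—after which the uniqueness of the $p$-th root is purely formal.
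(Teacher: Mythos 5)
Your proof is correct, and it takes a genuinely different route from the paper's. The paper proceeds by a double induction on coefficients: writing $a=\sum_i a_i t^{-i}$ and $b=\sum_i b_i t^{-i}$, it first inducts on cohomological degree to get $a_0=b_0$ (comparing the degree-$l$ parts of $a_0^p=b_0^p$ and cancelling $p\,a_{0,0}^{p-1}$), and then inducts on the $t^{-1}$-degree, at each step cancelling the non-zero-divisor $p\,a_0^{p-1}$ in the coefficient identity extracted from $a^p=b^p$; the only structural input is that elements of $R$ whose degree-zero part is non-zero are non-zero-divisors. You instead package the same structural facts --- that $R_0\cong \Q(\zeta)$ is a field, that the positive-degree part is nilpotent, and that $p$ is invertible --- into a completeness statement for your mixed filtration $F^\bullet$, so that every element of $1+F^1$ is a unit, after which the lemma collapses to the observation that the $p$-th root of unity $w=a'(b')^{-1}\in 1+F^1$ satisfies $w^p-1=\eta\,(p+h)$ with $p+h$ a unit, forcing $\eta=0$. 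Your route buys conceptual clarity (a Hensel-type uniqueness of roots of unity in a complete filtered $\Q$-algebra, rather than nested coefficient bookkeeping), and it also treats explicitly a point the paper waves through with ``without loss of generality'': you verify that $a^p=b^p$ forces the two leading terms to sit at the same power of $t^{-1}$. The paper's route buys elementarity: it needs no filtration, convergence, or inversion of series, only cancellation of non-zero-divisors. Both arguments ultimately rest on the same foundation, since the paper's non-zero-divisor claim is itself justified by noting that such elements are units (field element plus nilpotent).
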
 

\begin{proof}
	Write $a=\sum_{i=0}^{\infty} a_i t^{-i}$ and $b=\sum_{i=0}^{\infty} b_i t^{-i}$ with $a_i,b_i \in R$ for all non-negative integers $i$. Without loss of generality, we may assume $a_0\not=0$  and $b_0\not=0$. We have
	\begin{align} \begin{split} \label{algebra0}
	&a^p=\sum_{i=0}^{\infty} \sum_{j_1+\cdots +j_p=i} a_{j_1}\cdots a_{j_p} t^{-i}=b^p=\sum_{i=0}^{\infty} \sum_{j_1+\cdots+ j_p=i} b_{j_1}\cdots b_{j_p} t^{-i}.
	\end{split}
	\end{align}
	In particular, we have that $a_0^p=b_0^p$. For each positive integer $l$, let $R^l$ denote the degree $2l$ part of $R$ with respect to the cohomological grading.  Write $a_0=\sum_{l=0}^d a_{0,l}$ and $b_0=\sum_{l=0}^d b_{0,l}$ with $a_{0,l}, b_{0,l} \in R^l$ for $l=0,...,d$. Then we have $a_{0,0}=b_{0,0}$ by assumption, and we see that 
		\begin{align} \begin{split}  \label{algebra}
	&a_0^p=\sum_{l=0}^{d} \sum_{l_1+\cdots +l_p=l} a_{0,l_1}\cdots a_{0,l_p} =b_0^p=\sum_{i=0}^{d} \sum_{l_1+\cdots+ l_p=l} b_{0,l_1}\cdots b_{0,l_p}.
	\end{split} 
	\end{align} 
	 For each $l=1,..,d$ we get by taking the degree $l$ part of \eqref{algebra} that
		\begin{align} \begin{split}  \label{algebra2}
	&\sum_{l_1+\cdots +l_p=l} a_{0,l_1}\cdots a_{0,l_p}= pa_{0,0}^{p-1}a_{0,l}+\sum_{\substack{l_1+\cdots +l_p=l,
			\\ l_t <l, t=1,...,p}} a_{0,l_1}\cdots a_{0,l_p}
	\\&= \sum_{l_1+\cdots +l_p=l} b_{0,l_1}\cdots b_{0,l_p}= pb_{0,0}^{p-1}b_{0,l}+\sum_{\substack{l_1+\cdots +l_p=l,
			\\ l_t <l, t=1,...,p}} b_{0,l_1}\cdots b_{0,l_p}.
	\end{split} 
	\end{align}
	Now notice that any element of $R$ whose projection onto $R^0$ is non-zero is not a zero divisor in $R$. In particular $a_{0,0}^p=b_{0,0}^p$ is not a zero divisor, and therefore we get by induction from \eqref{algebra2} that $a_{0,l}=b_{0,l}$ for $l=0,...,d$. Thus $a_0=b_0$. From \eqref{algebra0} we deduce that for all non-negative integers $i$ we have that
	\begin{align} \begin{split}  \label{algebra3}
&\sum_{j_1+\cdots +j_p=i} a_{j_1}\cdots a_{j_p}= pa_0^{p-1}a_i+\sum_{\substack{j_1+\cdots +j_p=i,
\\ j_t <i, t=1,...,p}} a_{j_1}\cdots a_{j_p}
\\&= \sum_{j_1+\cdots +j_p=i} b_{j_1}\cdots b_{j_p}= pb_0^{p-1}b_i+\sum_{\substack{j_1+\cdots +j_p=i,
		\\ j_t <i, t=1,...,p}} b_{j_1}\cdots b_{j_p}.
	\end{split} 
	\end{align}
	By induction, and the fact that $a_0^p$ and $b_0^p$ are not zero divisors, we obtain from \eqref{algebra3} that $a_i=b_i$ for all non-negative integers $i$. Thus $a=b$. \end{proof}  

\begin{proof}[Proof of Theorem \ref{ThmMain}] 

	We must prove equation \eqref{eq:Main}. The bundle $\LD \rightarrow \M$ satisfies the conditions of Proposition \ref{pro:generalloc} with the $\To \times \langle f \rangle$-equivariant structure constructed in Section \ref{sec:Lift}. This follows from the fact that $\M$ is a semi-projective variety \cite{HV15}. Thus we can apply Proposition \ref{pro:generalloc}. Let  $i\in \mathcal{I}$ and $j \in \mathcal{J}_i$. To ease notation, we will write $\M_{i,j}^{\To}=\M_{i,j}$ and we will write $T_{i,j} \M$ (resp. $T_{i,j} \M_i$) for the restriction of $T\M$ (resp. $T\M_i$) to $\M_{i,j}$. By applying Proposition \ref{pro:generalloc} we obtain
    \begin{equation}
        \chi_{\To}(\M,\LD^{k},f)=\sum_{i \in \mathcal{I}}\sum_{j \in \mathcal{J}_i} (-1)^{\nu_{i,j}} \int_{\M_{i,j}} \Omega(T_{i,j} \M)\cdot  \ch(\LD^k_{\M_{i,j}}) \cdot \Td (\M_{i,j}).
    \end{equation} Therefore, proving equation \eqref{eq:Main} will be sufficient to prove that for all $i \in \mathcal{I}$ and $j \in \mathcal{J}_i$ we have that
	\begin{align} \label{eq:Closed}	 (-1)^{\nu_{i,j}}\Omega(T_{i,j} \M) &=(-1)^{\tilde{\nu}_{i,j}}\varXi_i^*(\tilde{\chi}_{i,j}),
  \\ \label{eq:inducedlinebundle} \ch(\LD_{\mid \M_{i,j}})&=\hat{\zeta}^{m_i}\varXi_i^*(\exp(\tilde{\omega}_{i,j})).
	\end{align} 
    This naturally divides the rest of the proof into two parts. In first, we will prove \eqref{eq:Closed}, in the second, we will prove \eqref{eq:inducedlinebundle}.

    \paragraph*{First part.} We will begin by proving the equation obtained by raising both sides of equation \eqref{eq:Closed} to the $p'th$ power. Write $\mathbb{U}=\End_0(\Eb) $, and define the equivariant class
	\begin{equation}
	V_{i,j}=\sum_{x \in X^f}\mathbb{U}_{\mid \M_{i,j}\times \{x\}} \cdot(1-t\cdot\hat{\zeta}^{-n_x})\sum_{l=1}^{p-1} l \cdot \hat{\zeta}^{-l n_x} .
	\end{equation}
	Then $\varXi_i^*(\tilde{V}_{i,j})=V_{i,j}$. Let $\pi: \M^{\To}\times X \rightarrow \M^{\To}$ be the projection onto the first factor. On $\M_{i,j} \subset \M^{\To}$, we have the following equation in $\To$-equivariant $K$-theory 
\begin{equation} \label{eq:derived}
T_{i,j} \M=\pi_!(\mathbb{U} \cdot \pi^*(K_X)\cdot t-\mathbb{U}).
\end{equation} This can be seen for instance by considering a standard long exact sequence for hypercohomology groups of two-term chain complexes (see \cite[\S 2.2.3]{hauselGeometryModuliSpace2001}). Consider the natural projection onto the first factor $\pi': \M_{i,j} \times X^f \rightarrow \M_{i,j}$. Note that we have  $(\M_{i,j}\times X)^f=\M_{i,j}\times X^f$. Consider the inclusion $\iota: \M_{i,j} \times X^f \rightarrow \M_{i,j}\times X$. Clearly $\pi\circ\iota=\pi'.$ By applying Nielsen's localization theorem to the right hand side of quation  \eqref{eq:derived}, we obtain the following identity in $S^{-1}\cdot K^0_{\To \times \mu_p}(\M_{i,j})$
	\begin{align}
T_{i,j} \M&=\pi_!\left(\iota_!\left(\frac{ \iota^*(\mathbb{U}(\pi^*(K_X)\cdot t-1))}{\lambda(N^*(\M_{i,j}\times X^f))} \right)\right)= \pi'_!\left(\frac{ \iota^*(\mathbb{U}(\pi^*(K_X)\cdot t-1))}{\lambda(N^*(\M_{i,j}\times X^f))} \right)
\\&= \sum_{x \in X^f }\frac{\mathbb{U}_{\mid \M_{i,j}\times \{x\}} \cdot(1-t\cdot\hat{\zeta}^{-n_x})}{\hat{\zeta}^{-n_x}-1}.
	\end{align}
		However, the  $\To\times \mu_p$-equivariant $K$-theory operation $s$ of taking the sum of all symmetric powers is not well-defined in $S^{-1}\cdot K^0_{\To \times \mu_p}(\M_{i,j})$, and we need to extract an identity in the ring  $K_{\To\times \mu_p}^0(\M_{i,j})$. For all $u=1,...,p-1$, we have the following identity in $S^{-1}\cdot \Z[\mu_p]$ 
	\begin{equation} \label{eq:nyttig}
		p \cdot (\hat{\zeta}^{u}-1)^{-1}=\sum_{l=1}^{p-1} l \hat{\zeta}^{ul},
	\end{equation} 
	which easily follows from the fact that $\Phi_p(\hat{\zeta}^{u})=0$ in $S^{-1}\cdot \Z[\mu_p]$ for $u=1,...,p-1$. Multiplying by $p$ and using equation \eqref{eq:nyttig} we obtain the following identity in $K_{\To\times \mu_p}^0(\M_{i,j}) / \ker(L)$
	\begin{equation} \label{over}
	p\cdot T_{i,j} \M= V_{i,j}  \mod \ker(L).
	\end{equation}
Notice that $\pi_0(T_{i,j}\M) = T_{i,j} \M_i$. Thus equation \eqref{over} together with Lemma \ref{nytLemma} implies that the following identity holds in the ring  $K_{\To\times \mu_p}^0(\M^{\To}_{i,j}) $
\begin{equation} \label{eq:Vigtig}
p\cdot T_{i,j} \M=p \cdot T_{i,j} \M_i \cdot \Phi_p(\hat{\zeta}) +V_{i,j}- \pi_0( V_{i,j}) \cdot \Phi_p(\hat{\zeta}).
\end{equation}
We can now apply $\Omega$ to equation \eqref{eq:Vigtig}. Using that $\Omega$ is a homomorphism that transforms sums into products, we obtain the desired consequence of equation \eqref{eq:Closed}
\begin{equation} \label{a}
\Omega(T_{i,j} \M)^p=\Omega( T_{i,j} \M_i \cdot \Phi_p(\hat{\zeta}))^p\frac{\Omega(V_{i,j})}{\Omega(\pi_0(V_{i,j}) \cdot \Phi_p(\hat{\zeta}) )}.
\end{equation}

Since $p$ is odd, it is trivial to see that equation \eqref{eq:Vigtig} implies that $(-1)^{\nu_{i,j}}=(-1)^{\tilde{\nu}_{i,j}}.$ Thus it only remains to extract the correct $p$'th root. Let $N^*_{i,j}$ be the conormal of $\M_{i,j}\subset \M^{\To}$. For each $l=1,...,p-1$ define $a_{i,j,l}=\rank(\pi_l(N^*_{i,j}))$, and define $b_{i,j,l}=\rank(\pi_l(T_{i,j} \M^{-}))$. Further, define $\rho_{i,j}=\sum_{l=1}^{p-1} b_{i,j,l} \cdot l.$ Consider the projection, which takes $\ch_0$ of the leading term of a $1/t$-series  
\begin{equation}
P_0: H^*(\M_{i,j},\Q)\otimes \Q[\mu_p][[t^{-1}]] \rightarrow H^0(\M_{i,j},\Q) \otimes (S^{-1}\cdot \Z[\mu_p]).
\end{equation} Upon inspecting the definition of $\omega$ and using \eqref{eq:Omegaintermsofomega} one finds that
\begin{align} \begin{split}\label{eq:leadingterm}
P_0(\ch(\Omega(T_{i,j} \M) ))&=P_0\left(\ch(\omega(T_{i,j} \M) \cdot \lambda\left(N^*_{i,j} )\right)^{-1}\right)
\\&=\hat{\zeta}^{\rho_{i,j}} \prod_{l=1}^{p-1} (1-\hat{\zeta}^{l})^{-a_{i,j,l}}.
\end{split}
\end{align}
From equation \eqref{eq:Vigtig} we deduce that 
\begin{align}
&p \left( T_{i,j} \M^{\To} - T \M_{i,j}\right) =p \sum_{l=1}^{p-1}   T \M_{i,j} \cdot \hat{\zeta}^l+ V^{\To}_{i,j} -\pi_0(V_{ i,j }^{\To} )\cdot \Phi_p(\hat{\zeta}) ,
\\& p T_{i,j}\M^{-}=p  (T_{i,j} \M_i)^{-} \cdot  \Phi_p(\hat{\zeta}) +V_{i,j}^{-}-\pi_0(V_{i,j})^{-}\cdot \Phi_p(\hat{\zeta}),
\end{align}
and therefore, we obtain
\begin{align}
&a_{i,j,l}= \dim(\M^{\To}_{i,j})+\frac{\ch_0\left(\pi_l(V_{i,j} ^{\To}) -\pi_0({V_{i,j}^{\To})}\right)}{p},
\\ &b_{i,j,l}= \rank(T_{i,j} \M_i^{-}) + \frac{\ch_0\left( \pi_l(V_{i,j} ^{-}) -\pi_0(V_{i,j}^{-})\right)}{p}.
\end{align}
Thus it is clear that $a_{i,j,l}=\tilde{a}_{i,j,l}$ and $b_{i,j,l}=\tilde{b}_{i,j,l}$. Therefore also $\rho_{i,j}=\tilde{\rho}_{i,j}$. It follows that the right hand side of equation \eqref{eq:leadingterm} is equal to $\tilde{\zeta}_{i,j}$. Thus we can conclude by applying Lemma \ref{lemmapthroot} applied to the elements $a=\ch(\tilde{\chi}_{i,j})$ and $b=\ch(\Omega(T_{i,j} \M)).$

\paragraph*{Second part.}  We now prove \eqref{eq:inducedlinebundle}. Define a $\To$-equivariant line bundle on $\tM_i$ by $\check{\LD}_i=(\varXi_i^{-1})^*(\LD_{\mid \M_i})$. We must show that $c_1(\check{\LD}_i)=\tilde{\omega}_i$. Recall that $\tilde{\n}_i$ is diffeomorphic to a certain moduli space of flat $G$-bundles on $\tilde{X}\setminus \D_i$. Through this identification, the moduli space $\tilde{\n}_i$ supports an induced symplectic form $\tilde{\Omega}_i$ as constructed in \cite{freedClassicalChernSimonsTheory1995}, whose cohomology class satisfy
\begin{equation}
    [\tilde{\Omega}_i]=\left( \tilde{\alpha}_i+\sum_{x \in \D_i} w_{i,2}(x) \delta_{i,x}\right)_{\mid \n_i}= \frac{\tilde{\omega_i}_{\mid \tilde{\n}_i}}{p}.
\end{equation}
It was observed in \cite{andersenWittenReshetikhinTuraev2013} that
\begin{equation} \label{eq:curvatureequation}
c_1(\check{\LD}_{i \mid \tilde{\n}_i})=\tilde{\omega}_{i \mid \tilde{\n}_i}=p[\tilde{\Omega}_i].
\end{equation}
The natural map $\Pic(\tM_i) \rightarrow \Pic(\tilde{\n}_i)$ is an isomorphism. This is proven in \cite{Roy2023} in the case of moduli spaces of stable parabolic Higgs bundles of coprime rank and degree and stable parabolic holomorphic vector bundles of coprime rank and degree, and the result generalizes easily to the cased of fixed determinant. Therefore, equation \eqref{eq:curvatureequation} implies the following identity in cohomology
\begin{equation} \label{eq:iii}
  c_1(\check{\LD}_i)=\tilde{\omega}_i.
\end{equation}
Thus it only remains to argue that this identity holds as an identity in $\To$-equivariant cohomology. 

Recall that $\To$-equivariant cohomology of a point is additively isomorphic to a polynomial ring of one formal variable, which we denote by $u$, of cohomological degree two. Recall that as $\M$ is a semi-projective variety in the sense of \cite{bialynicki-birulaTheoremsActionsAlgebraic1973}, we have that $H^*(\M)\cong H^*(\M^{\To})$ and by \cite{K84} the $\To$-equivariant cohomology ring $H^*_{\To}(\M)$ is isomorphic as a $\Q[u]$-module to the ring of polynomials in $u$ with coefficients in $H^*(\M)$, i.e. in symbols
\begin{equation}
    H^*_{\To}(\M) \cong H^*(\M) [u] \cong H^*(\tM^{\To})[u].
\end{equation} 
Thus, for any $\To$-equivariant line bundle $L \rightarrow \M$, and any component $Z \subset \M^{\To},$ the weight of the $\To$-action on $L_{\mid Z}$ is equal to an integer $b$ if and only if the coefficient of $u$ in $c_1(\LD)_{\mid Z}$ is equal to $b$. Similarly, $H^*_{\To}(\tM_i) \cong H^*(\tM_i^{\To})[u]$. Because of this isomorphism, we see that establishing \eqref{eq:inducedlinebundle} for every $i \in \mathcal{I}$ and $ j \in \mathcal{J}_i$ is equivalent to establing \eqref{eq:identificationofinducedTLibebundle} for every $i \in \mathcal{I}$.

Thus, we mush show that for every component $\tM_{i,j}$ of $\tM_i^{\To}$ the following holds in $\To$-equivariant cohomology
\begin{equation} \label{eq:curvaturecomponent}
    c_1(\check{\LD}_{i \mid \tM_{i,j} })=\tilde{\omega}_{i \mid \tM_{i,j}}.
\end{equation}
Because of the identity between complex line bundles given in equation \eqref{eq:iii}, we see that equation \eqref{eq:curvaturecomponent} is equivalent to the coefficient of $u$ being the same in $c_1(\LD)_{\mid \M_{i,j}}$ and in $\tilde{\omega}_{i \mid \tM_{i,j}}$. This fact was already established for the component corresponding to $\tilde{\n}_i$ (as the coefficient of $u$ is equal to zero on both sides of the equation). Thus, it remains to prove this fact for a component $\M_{i,j}$ with $j \in \mathcal{J}_{i,c}$ and $c \in C$.

Write $\alpha=c_1(\LD).$ By definition, we have that $\M_{i,j} \subset \mathcal{F}_c$, and by  \cite[Lemma 6.1]{hauselRelationsCohomologyRing2003} it holds that the coefficient of $\alpha_{\mid \mathcal{F}_c}$ is equal to $\chi+c$, where $\chi=2-2g$ is the Euler characteristic of the Riemann surface $X$. By adapting the proof of \cite[Lemma 6.1]{hauselRelationsCohomologyRing2003} one can show that the coefficient of $u$ in the restriction of $\tilde{\alpha}_i$ to $\tM_{i,j}$ is equal to the integer $\mu_{ij}$ defined to be
\begin{equation}
    \mu_{ij}= \tilde{\chi}-\deg(\mathcal{\D}_i)+\deg(\D_{ij})+l_j,
\end{equation}
where $\tilde{\chi}=2-2\tilde{g}$ is the Euler characteristic of $\X$, $l=l_j=(c-\deg(j))/p$  and $\D_{ij}$ is the divisor defined in \eqref{eq:Dij}. Similarly, the coefficient of $u$ in $\delta_{i,x}$ restricted to $\tM_{i,j}$ is equal to $\epsilon_{j,x}$. Recall $pw_{i,2}(x)=b_i(x)$ for each $x \in \D_i$ by the definitions of $w_i$ and $b_i$. Hence, the coefficient $c_{ij}$ of $u$ in $\tilde{\omega}_i$ restricted to $\tM_{i,j}$ is equal to
\begin{equation}
    c_{i,j}=\mu_{i,j}+\sum_{x \in \D_i} \epsilon_{j,x}b_i(x),
\end{equation}
and we must show that $c_{i,j}=\chi+c.$

Recall that since $X \rightarrow \X$ is a branched covering of prime order $p$, Hurwitz' formula simplifies to
\begin{equation} \label{eq:Hurwitz}
  \chi=  p\tilde{\chi}-(p-1)\lvert X^f\rvert.
\end{equation}
Using these facts, the expression for $p \deg(\D_{ij})=\deg(\pi^*(\D_{ij}))$ given by the right hand side of equation \eqref{eq:Dij}, and the fact that $pl_j+\deg(j)=c$ by definition of $j \in \mathcal{J}_{i,c}$, we see that the coefficient $c_{ij}$ of $u$ in $\tilde{\omega}_i$ restricted to $\tM_{i,j}$ is equal to $\chi+c$ by the following computation
\begin{align}
    c_{ij}&=p\mu_{ij}+\sum_{x \in \D_i} b_{i}(x) \epsilon_{j,x}
    \\&=p(\tilde{\chi}-\deg(\mathcal{\D}_i)+\deg(\D_{ij})+l)+\sum_{x \in \D_i} b_{i}(x) \epsilon_{j,x}
    \\ &=p\tilde{\chi}-p\lvert \mathcal{D}_i \rvert+pl+
 (\deg(j)+\sum_{x \in \D_i}(1-\epsilon_{j,x}b_i(x))-(p-1)\lvert X^f \setminus \mathcal{D}_i \rvert)
 \\& +\sum_{x \in \D_i} b_{i}(x) \epsilon_{j,x}
=(p\tilde{\chi}-(p-1)\lvert X^f \rvert)+(pl+\deg(j))= \chi+c.
\end{align}
This finishes the proof.
\end{proof}

\subsection{The Galois Case} \label{sec:Galois}
Recall the notation $\pi_\M: \M \times X \rightarrow \M$ for the natural projection onto the first factor. Assume that $X^f=\emptyset$. Then Hurwitz formula specialises to $\chi(X)=p \chi(\X)$, where $\chi(X)$ (resp. $\chi(\X))$) denotes the Euler characteristic of $X$ (resp. $\X$). Fix $\tilde{x}_0 \in \X$ and define 
\begin{equation}
\LD=\det((\pi_\M)_!(\Eb)[1])\bigotimes_{x \in \pi^{-1}(\tilde{x}_0)}\det(\Eb_{\mid \M\times\{x\} })^{1-g}.
\end{equation}
With notation as in the introduction, let $\tM$ be the moduli space of stable Higgs bundles on $\X$ of rank two and fixed determinant $\tilde{\Lambda}$ of odd degree. Let $\tilde{\Eb}\rightarrow \tM\times \X$ be the universal Higgs bundle. Consider the projection $\pi_{\tM} : \tM\times \X \rightarrow \tM$ and define
\begin{equation} 
\label{def:tildedet} \tilde{\LD}=\det(\tilde{\Eb}_{\mid \tM \times \{x_0\}})^{1-\tilde{g}} \otimes \det((\pi_{\tM})_!(\tilde{\Eb})[1]),
\end{equation} 
where $(\pi_{\tM})_!(\tilde{\Eb})[1]$ is the $1$-shift of the derived pushforward of $\tilde{\Eb}$.

\begin{proof}[Proof of Corollary \ref{thm:Galois}]

	In the case $X^f$ is empty, pullback with respect to the projection $\pi: X \rightarrow \X$ induces an isomorphism of $\To$-varities $\pi^*:\tM \rightarrow \M^f $. We write $\varXi=(\pi^*)^{-1}$ for the inverse isomorphism of $\To$-varieties. We have that $(\varXi^{-1})^*(\LD) \cong\tilde{\LD}^{\otimes p}$. We simplify the notation and write $\{\tilde{F}_a\}_{a\in A}$ for the components of $\tM^{\To}$. Let $a\in A$. Denote by $T_a \tM$ the restriction of $T\tM$ to $\tilde{F}_a$ and write ${\nu}_{a}=\rank(T_a\tM^{-})$. Observe that as $\D$ is empty, we have that $\tilde{\nu}_{a}=\nu_{a}$. Define the $t^{-1}$-series
	\begin{equation}
	\chi_{a}(t)= (-1)^{\tilde{\nu}_{a}} \int_{\tilde{F}_{a}} \ch(\omega(T_a\tM)\cdot \tilde{\LD}^k) \cdot \Td \tilde{F}_{a}.
 	\end{equation} Then $\chi_t(\tM,\tilde{\LD}^k)(t)=\sum_{a \in A} 	\chi_{a}(t)$ by the localization result of Wu \cite[Theorem 3.14]{wuInstantonComplexHolomorphic2003} recalled in Theorem \ref{thmWu} in this article. By Theorem \ref{ThmMain} it will therefore be sufficient to prove that for all $a\in A$ we have that
 	\begin{equation} \label{eq:sufficient}
 (-1)^{\tilde{\nu}_{a}} \int_{\tilde{F}_{a}} \ch( \tilde{\chi}_{a} \cdot\tilde{\LD}^{kp}) \cdot \Td \tilde{F}_{a} =	\chi_{a}(t^p).
 	\end{equation}
 	 We now analyze the left hand side of equation \eqref{eq:sufficient}. As $\D$ is empty, we find that
	\begin{equation}
	\tilde{\chi}_{a}= \omega(T_a\tM \cdot \Phi_p(\hat{\zeta}))\prod_{j=1}^{p-1} \lambda(T^* \tilde{F}_{a}\cdot \hat{\zeta}^j)^{-1}.
	\end{equation}
Let $u$ be a formal variable and recall the notation $s_u$ (resp. $\lambda_u$) for the operation of taking the graded sum of symmetric (resp. exterior) powers. Recall that if $\{b\}$ denotes the set of Chern roots of a general vector bundle $W$ then 
	\begin{equation} \label{eq:nyttigformel}
	\ch(s_u(W^*))=\ch(\lambda_{-u}(W^*))^{-1}=\prod_{b}(1-ue^{-b})^{-1}.
	\end{equation} Recall that if $c_{a}$ denotes the first Chern class of $\tilde{F}_{a}$, and if $\{z\}$ denotes the set of Chern roots of $T \tilde{F}_{a}$, then we have that $c_{a}=\sum_{z}z$ and
	\begin{equation} \label{eq:Todd}
	\Td \tilde{F}_{a}= \exp(c_{a}/2) \prod_{z} \frac{z}{e^{z/2}-e^{-z/2}}.
	\end{equation} 
	On the other hand, we have that
	\begin{align}
	& \label{eq:firstsimpli}  \prod_{j=1}^{p-1} \ch(\lambda(T^* \tilde{F}_{a}\cdot \hat{\zeta}^j )^{-1})=  \prod_{\alpha \in \mu_p^*}  \prod_{z} \frac{1}{ 1-\alpha e^{-z}} = \frac{\exp\left((p-1)c_l/2\right) }{\prod_{\alpha \in \mu_p^*}  \prod_{z} \left( e^{z/2}-\alpha e^{-z/2} \right)} 
	\\ &=  \exp\left((p-1)\frac{c_a}{2}\right)  \prod_{z}  \frac{\left( e^{z/2}- e^{-z/2} \right)}{\left( e^{pz/2}- e^{-pz/2} \right)}, \label{eq:simplification3}
	\end{align} 
	where, for equation  \eqref{eq:firstsimpli} we used \eqref{eq:nyttigformel}, and for equation \eqref{eq:simplification3} we used the algebraic identity 
	\begin{equation} \label{eq:algebra}
	Z^p-Z^{-p}=\prod_{\alpha \in \mu_p}(Z-\alpha Z^{-1}),
	\end{equation} where $Z$ is a formal variable. Define $d_{a}=\dim(\tilde{F}_{a})$. It follows, that if we write $\Td \tilde{F}_{a}=\Td \tilde{F}_{a}(z)$ emphasizing its dependence on the set of Chern-roots $\{z\}$, then we get from equation \eqref{eq:simplification3} and equation \eqref{eq:Todd} that the following identity holds
	\begin{equation} \label{iki}
\prod_{j=1}^{p-1} \ch(\lambda(T^* \tilde{F}_{a} \cdot \hat{\zeta}^j )^{-1}) \cdot	\Td \tilde{F}_{a}=  p^{-d_{l}}\Td \tilde{F}_{a}(pz).
	\end{equation}
Let $\{y\}$ denote the set of Chern-roots of $T_a \tM$ and write $\ch(\omega(T_a \tM))(t,y)$ to emphasize its dependence on $(t,y).$ By unwinding the definition of $\omega$ and applying again equation \eqref{eq:nyttigformel} and equation \eqref{eq:algebra}, we obtain 
	\begin{equation}
	\label{eq:uku}
\ch(\omega(T_a \tM \cdot \Phi_p(\hat{\zeta}))=\ch(\omega(T_a \tM))(t^p,py).
	\end{equation}
	Taking the product of equations \eqref{eq:uku} and \eqref{iki}, and exploiting homogeneity, we obtain the desired identity
	\begin{align}
	 &(-1)^{\tilde{\nu}_{a}} \int_{\tilde{F}_{a}} \ch(\tilde{\chi}_{a} \cdot  \tilde{\LD}^{kp}) \cdot \Td \tilde{F}_{a}  =
	 \\& (-1)^{\tilde{\nu}_{a}}p^{-d_{a}} \int_{\tilde{F}_{a}} \omega_{a}(T \tM)(t^p,py)\cdot \ch(\tilde{\LD}^p)^k \cdot  \Td
	 \tilde{F}_{a}(pz)
	 =\chi_{a}(t^p),
	\end{align}
	which was what we wanted to prove.
 \end{proof}

\section{Quantum Topology} \label{sec:QT} 
Recall the notation $M_f$ for the mapping torus of $f: \Sigma \rightarrow \Sigma,$ where $\Sigma$ is the underlying two-manifold of $X$.  

\begin{proof}[Proof of Corollary \ref{Cor:TopInv}]
	
Define the tuple of integers $(k_x)_{x \in X^f}$ by the conditions that for all $x \in X^f$ we have that $k_x=[-m_x]_p$, where $m_x$ was defined in Section \ref{sec:FIXEDLOCUS}. The mapping torus $M_f$  admits a unique Seifert fibration \cite{orlikSeifertManifolds1972} of type $\epsilon=o_1$ with Seifert invariants expressible in terms of the fixed point data \begin{equation} \label{eq:SeifertInvariants} (b,g,(a_1,b_1),...,(a_u,b_u))=(-p\Sigma_{x \in X^f} k^{-1}_{x},  \tilde{g} , (k_x,p)_{x \in X^f} ).\end{equation}
It is thus clear that $(D_{\Lambda},l_0)$ is determined by the Seifert invariants of $(M_f,K)$ as defined in the introduction.
\end{proof}

We now explain the refinement formula given in equation \eqref{eq:algquantumtop}.  We have that $\n^f$ is a subvariety of $\M^{\To\times \f}$. The torus $\To$ act trivially on the restriction of the determinant line bundle $\LD$ to $\n$, whereas $\To$ acts with non-trivial weight $t^{(2-2g)m}$ on $\LD$ restricted to $\mathcal{F}_m$ for each $m \in M$. Therefore the sum appearing in \eqref{eq:Main} corresponding to components of $\n^f$ is seen to give the constant term of $\chi_{\To}(\M,\LD^k,f)$. We have that $T_{\n} \M^{-}\cong T \n$, and $\det( T \n) \cong\LD^{2}$. As the rank of $T_{\n}\M^{-}$ is equal to $3g-3$, the constant is equal to $(-1)^{3g-3} $ times $\chi(\n,\LD^{k+2},f)$. Thus we obtain equation \eqref{eq:algquantumtop}. 

\subsection{Complex Quantum Chern-Simons, TQFT Invariants and The Automorphism Equivariant Hitchin Index} 

Classical Chern-Simons theory with complex gauge group was introduced in \cite{cheegerDifferentialCharactersGeometric1985}, and complex quantum Chern-Simons was studied in the physics literature in \cite{gukovThreedimensionalQuantumGravity2005,wittenQuantizationChernSimonsGauge1991}. In $\cite{wittenQuantizationChernSimonsGauge1991}$, the moduli space of flat complex connections on a two-manifold was quantized with respect to a family of real polarizations (different from the one considered in this article). This quantization was subsequently studied by the first author et al in the works \cite{andersenHitchinWittenConnectionComplex2014,andersenGenusoneComplexQuantum2021}. A rigorous mathematical construction of a full TQFT modelling complex quantum Chern-Simons theory have yet to be given, although it is widely believed that the quantum Teichmüller TQFT \cite{andersenQuantumTeichmullerTheory2014,AndersenKashaev18} of the first author and Kashaev is a part of it corresponding to the real gauge group $\PSL(2,\R)$.

As explained in detail in the introduction, the study of the automorphism equivaraint index  \eqref{eq:index} is motivated by TQFT axioms \cite{atiyahTopologicalQuantumField1988} and the prospect of a full three-dimensional TQFT that models quantum Chern-Simons theory with complex gauge group $\SL(2,\C)$. In particular, it is natural to conjecture that the index \eqref{eq:index} is closely connected to the partition function on the mapping torus with the presence of a (labelled) knot traced out by the marked point. This belief is supported by Corallary \ref{Cor:TopInv}. As such, Corollary \ref{Cor:TopInv} partially extends the bridge between algebraic geometry and quantum topology that exists via the realization of the TQFT representation of the mapping class group of a two-manifold through quantization of the moduli space of flat connections on said two-manifold. See e.g. \cite{andersenHitchinConnectionToeplitz2012,andersenGeometricConstructionModular2007,axelrodGeometricQuantizationChernSimons1991,hitchinFlatConnectionsGeometric1990,wittenQuantumFieldTheory1989} and the references therein.

\bibliography{AEHI}

\noindent
Jørgen Ellegaard Andersen \\
Center for Quantum Mathematics\\
Danish Institute for Advanced Studies \\
University of Southern Denmark\\
DK-5000 Odense C, Denmark\\
jea{\@@}.sdu.dk
\\\\
William Elbæk Mistegård  \\
Center for Quantum Mathematics\\
University of Southern Denmark\\
DK-5000 Odense C, Denmark\\
wem{\@@}imada.sdu.dk

\end{document}